\documentclass[11pt]{amsart}

\usepackage{amssymb, amsthm, amsmath}
\usepackage{bbm}
\usepackage{xcolor}
\usepackage{graphicx}
\newtheorem{theorem}{Theorem}[section]
\newtheorem{fact}[theorem]{Fact}
\newtheorem{lemma}[theorem]{Lemma}
\newtheorem{corollary}[theorem]{Corollary}

\newtheorem*{theorem*}{Theorem}{\bf}{\it}
\newtheorem*{proposition*}{Proposition}{\bf}{\it}
\newtheorem*{observation*}{Observation}{\bf}{\it}
\newtheorem*{lemma*}{Lemma}{\bf}{\it}
\newtheorem{claim}[theorem]{Claim}{\bf}{\it}

\theoremstyle{definition}

\theoremstyle{remark}
\newtheorem{remark}[theorem]{Remark}

\def\XXint#1#2#3{{\setbox0=\hbox{$#1{#2#3}{\int}$ }
\vcenter{\hbox{$#2#3$ }}\kern-.6\wd0}}

\begin{document}
\title[]{The Landis conjecture on exponential decay}
\subjclass{Primary 35J15; Secondary 30C62}
\keywords{Landis' conjecture, Schr\"odinger equation, quasiconformal mappings, vanishing order.}

\author{A. Logunov}
\address{Alexander Logunov: Department of Mathematics, Princeton University, Princeton, NJ, USA}
\email{log239@yandex.ru}
\author{E. Malinnikova}
\address{Eugenia Malinnikova: Department of Mathematics, Stanford University, Stanford, CA, USA}
\email{eugeniam@stanford.edu}
\author{N. Nadirashvili}
\address{Nicolai Nadirashvili: \phantom{n}CNRS, \phantom{n}Institut de Math\' ematique de Marseille, Marseille, France 
}
\email{nikolay.nadirashvili@univ-amu.fr}
\author{F. Nazarov}
\address{Fedor Nazarov: Department of Mathematics, Kent State University, Kent, OH, USA}
\email{nazarov@math.kent.edu}
\begin{abstract}
Consider a solution $u$ to $\Delta u +Vu=0$ on $\mathbb{R}^2$, where  $V$ is real-valued, measurable and $|V|\leq 1$. 
If $|u(x)| \leq \exp(-C |x| \log^{1/2}|x|)$, $|x|>2$,
where $C$ is a sufficiently large absolute constant, then 
$u\equiv 0$. 
\end{abstract}

\maketitle

 \section{The main result.}

 Let $u$ be a  solution to 
 \begin{equation} \label{eq:schr}
 \Delta u + V u =0
 \end{equation}
  in $\mathbb{R}^n$, where $V$ is
  a measurable function with $|V | \leq 1 $ in the whole space. According to \cite{C15},\cite{KL88}, in the late 1960s Landis conjectured  that if 
  $$ |u(x)| \leq \exp(-C|x|), $$
  where $C>0$ is a sufficiently large constant, then $u\equiv0$. 
 The weaker statement, which was also conjectured by Landis  according to \cite{C15}, states that
 if $|u(x)|$ tends to $0$ faster than exponentially at $\infty$, i.e., $$|u(x)| \leq \exp(-|x|^{1+\varepsilon}),\varepsilon >0,$$ then $u\equiv 0$.
 
 There are two versions of Landis' conjectures: real and complex.
  Meshkov \cite{M92} constructed  a counter-example to the complex version of Landis' conjecture. He showed that there is a complex-valued potential $V$ with $|V|\leq 1$ and a non-zero solution $u$ to \eqref{eq:schr} on $\mathbb{R}^2$ such that $|u(x)| \leq  \exp(-c |x|^{4/3}).$
 Meshkov also showed (in any dimension $n$) that if 
$$ \sup_{\mathbb{R}^n} |u(x)| e^{-\tau |x|^{ 4/3}}< \infty \text{ for all }  \tau>0,$$
then $u\equiv 0$.
The question whether the Landis conjecture is true for real-valued $V$ is open.
The main result of this article confirms the weak version of the Landis conjecture in dimension two.

\begin{theorem} \label{main}
Suppose that $\Delta u +Vu=0$ on $\mathbb{R}^2$, where $u$ and $V$ are real-valued and $|V|\leq 1$. 
If $|u(x)| \leq \exp(-C |x| \log^{1/2}|x|)$, $|x|>2$,
where $C$ is a sufficiently large absolute constant, then 
$u\equiv 0$. 
\end{theorem}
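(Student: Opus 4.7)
The strategy is to argue by contradiction: assume $u \not\equiv 0$ satisfies the hypothesized decay, and derive a contradiction with a standard elliptic lower bound for $|u|$. Two ingredients carry the argument: the Bers--Nirenberg representation, which in two dimensions rewrites real solutions of Schr\"odinger equations with real potential as real parts of analytic-like functions up to a quasiconformal change of variable, and two-constants / harmonic-measure estimates for entire holomorphic functions.

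First I would set up the reduction. Using that $V$ is real and $n=2$, attach to $u$ a conjugate $v$ so that $f = u + iv$ satisfies a Beltrami--Vekua-type equation $\bar{\partial} f = \alpha \bar{f}$ with $|\alpha|$ controlled by $\|V\|_\infty \leq 1$. By the measurable Riemann mapping theorem there exists a $K$-quasiconformal homeomorphism $\phi \colon \mathbb{R}^2 \to \mathbb{R}^2$, with $K$ depending only on $\|V\|_\infty$, such that $G := f \circ \phi^{-1}$ is entire holomorphic. Mori-type distortion estimates then transfer the hypothesized decay of $u$ to a corresponding (polynomially compressed) decay of $\operatorname{Re} G$ on large Euclidean disks in the target plane.

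Next I would combine this with a quantitative lower bound. After normalizing so that $u$ is of unit size on $B_2$, Carleman / doubling estimates for Schr\"odinger equations give $\sup_{B_R}|u| \geq \exp(-CR)$, hence a lower bound on $|G|$ on large disks. Against this, $\operatorname{Re} G$ is extremely small on a large set at radius $\asymp R$. A two-constants inequality---an entire function bounded on a large disk whose real part is tiny on a subset of large harmonic-measure mass is itself tiny at the center---then forces $G \equiv 0$, contradicting $u \not\equiv 0$.

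The principal obstacle is the sharpness of this last step. Meshkov's $|x|^{4/3}$ counterexample uses a complex $V$, so any improvement beyond the complex threshold must genuinely exploit real-valuedness of the potential, via the existence of nodal components of $u$ where $\operatorname{Re} G$ has a definite sign. The $\sqrt{\log|x|}$ factor would arise from optimizing the width of a thin tubular region along which smallness is propagated against the cost of the harmonic-measure estimate in that tube; producing the correct exponent $1/2$ on the logarithm, rather than some suboptimal power, is the delicate quantitative heart of the proof.
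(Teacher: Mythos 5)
Your proposal correctly identifies the broad framework---quasiconformal change of variables in two dimensions, Mori's distortion theorem, and the need to exploit real-valuedness through nodal structure---but it misses the two mechanisms that actually make the proof work, and one of your steps is circular.

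First, the Bers--Vekua route you sketch produces a quasiconformal map whose dilatation $K$ depends only on $\|V\|_\infty$, so $K$ is a \emph{fixed} constant strictly larger than $1$. Mori's theorem then distorts distances by $|z|^{K}$ versus $|z|^{1/K}$ with $K-1$ bounded away from zero; on the scale $|z|\sim R$ this introduces polynomial losses and cannot possibly yield a bound of the form $\exp(-CR\log^{1/2}R)$ (indeed, it is roughly how one recovers a Meshkov-type $R^{4/3}$ exponent). The paper's Act I and Act II exist precisely to overcome this: one constructs an auxiliary solution $\varphi$ of the \emph{same} equation with $\|\varphi-1\|_\infty\lesssim\varepsilon^2$ and passes to $f=u/\varphi$, which satisfies $\mathrm{div}(\varphi^2\nabla f)=0$. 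The resulting Beltrami coefficient has modulus $O(\varepsilon^2)$, so $K\leq 1+C\varepsilon^2$; choosing $\varepsilon\sim 1/\sqrt{\log R}$ makes $R^{K-1}$ bounded, which is what lets Mori's theorem transfer decay without loss. Building $\varphi$ in turn requires a domain with small Poincar\'e constant, and that is the real role of the nodal set and of puncturing the nodal domains with $\varepsilon$-disks---not to mark where $\Re G$ has a sign, as you suggest, but to make the set where $\varphi-1$ lives ``thin'' in the Poincar\'e sense.

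Second, you invoke a ``Carleman/doubling'' lower bound $\sup_{B_R}|u|\geq\exp(-CR)$ as an input. That is essentially the statement being proved; the best previously known input of this type was $\exp(-CR^{4/3}\log R)$ (Bourgain--Kenig). The paper obtains the $\exp(-CR\sqrt{\log R})$ lower bound as the \emph{output} of the reduction, via a new estimate (Theorem~\ref{local 3}) for harmonic functions in a punctured disk where the function has constant sign near each puncture; the key device there is the auxiliary analytic function $(u_x-iu_y)/z^k$ (or $(u_x-iu_y)e^{kz}$ in the toy global case) together with the maximum principle and a ray-tracing argument between punctures. A generic two-constants/harmonic-measure argument on an entire $G$ does not see the sign condition at the punctures and will not give the correct logarithmic exponent. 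In short, the proposal would need the auxiliary-solution/Poincar\'e-constant construction to make $K\to1$, and a genuinely different endgame than the two-constants theorem, before it could close.
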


A similar striking difference between the decay estimates for real and complex solutions has also been observed in \cite{C15}, where a closely related equation $\Delta u+W\cdot\nabla u=0$ with a bounded vector field $W:\mathbb{R}^2 \to \mathbb{R}^2$ was studied.

There is a simple example of a solution to \eqref{eq:schr} with bounded $V$ that
decays exponentially. Define $u=e^{-|x|}$ in $\{ |x| > 1\}$ and extend it to a  $C^2$ smooth positive  function on the plane. Then $|\Delta u| \leq C|u|$ and by taking $u(\frac{1}{\sqrt C} \cdot)$ in place of $u$ one can make $|V|\leq 1$ in this example.

 The assumption that $u$ is real-valued is redundant because in the case of real-valued $V$
the real and imaginary parts of $u$ also satisfy \eqref{eq:schr}. But in the proof we will use that $u$ is real-valued.
The proof of Theorem \ref{main} combines the technique of quasiconformal mappings with two tricks. The tricks involve  nodal sets (zero sets) of $u$ and holes that are made in nodal domains (connected components of the complement of the zero set).  We describe the idea in Section \ref{Idea}. Some two-dimensional tools are used in the proof and the Landis conjecture in higher dimensions is still open.

Our second result is a local version of Landis' conjecture.

\begin{theorem} \label{local 1} Let $u$ be a real solution to $\Delta u+ Vu=0$ in $B(0,2R)\subset\mathbb{R}^2$, where $V$ is real-valued and $|V|\leq 1$. 
Suppose that $|u(0)|=\sup\limits_{B(0,2R)} |u|= 1 $. Then
for any $x_0$ with $|x_0|=R/2>2$, we have
$$ \sup\limits_{B(x_0,1)}|u| \geq \exp(-C R \log^{3/2}R)$$
with some absolute constant $C>0$.
\end{theorem}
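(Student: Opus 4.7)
The plan is to deduce Theorem \ref{local 1} from a Hadamard-type three-ball inequality for solutions of $\Delta u + Vu = 0$ on a disk of radius $\sim R$, with an additive error matching the decay rate in Theorem \ref{main}, applied just once at $x_0$.

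First, I would establish the following three-ball inequality: writing $M_y(r) = \sup_{B(y,r)}|u|$, for every $y$ and every $0 < r_1 < r_2 < r_3$ with $B(y, r_3) \subset B(0, 2R)$,
\[
M_y(r_2) \leq \exp\!\bigl(C R \log^{1/2} R\bigr)\, M_y(r_1)^{\alpha}\, M_y(r_3)^{1-\alpha},
\qquad \alpha = \frac{\log(r_3/r_2)}{\log(r_3/r_1)}.
\]
This is the local counterpart of the decay rate in Theorem \ref{main}. The plan for this step is to follow the quasiconformal scheme of Section \ref{Idea}: construct a positive solution $\phi$ of $\Delta\phi+V\phi=0$ on a subdomain of $B(0,2R)$ with controlled oscillation $\|\log\phi\|_\infty \leq CR\log^{1/2}R$ (this is what the nodal-set and holes tricks of the main theorem deliver); set $w = u/\phi$, which solves the divergence-form equation $\mathrm{div}(\phi^2 \nabla w)=0$; package $w$ with its stream function into a $K$-quasiregular complex map with $K$ an absolute constant; and invoke the Hadamard three-circle theorem for quasiregular mappings via Stoilow factorization. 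The additive error is precisely the oscillation of $\log \phi$.

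Second, I would apply the three-ball inequality once at $y = x_0$ with $r_1 = 1$, $r_2 = R/2$, $r_3 = R$. Since $|x_0| = R/2$, the ball $B(x_0, R/2)$ contains the origin and hence $M_{x_0}(R/2) \geq |u(0)| = 1$; since $B(x_0, R) \subset B(0, 2R)$, we have $M_{x_0}(R) \leq 1$. The convexity exponent becomes $\alpha = \log 2 / \log R$. Taking logarithms gives
\[
0 \leq \log M_{x_0}(R/2) \leq CR\log^{1/2}R + \frac{\log 2}{\log R}\,\log M_{x_0}(1),
\]
which rearranges immediately to $M_{x_0}(1) \geq \exp(-C' R \log^{3/2} R)$, as required. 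Note the $3/2$ power is exactly the sum of the $1/2$ from the three-ball error and the single extra $\log R$ from the reciprocal $1/\alpha$.

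The main obstacle is Step 1: obtaining the sharp additive error $CR\log^{1/2}R$ is not a routine input but essentially a quantitative localization of the proof of Theorem \ref{main}, so this step effectively invokes the paper's whole quasiconformal machinery together with the multiplier construction. A softer version with additive error $CR$ (from standard Carleman estimates for Schr\"odinger equations with bounded potential) would already yield the stronger bound $\exp(-CR\log R)$; the $\log^{3/2}$ in the statement is precisely what the paper's method produces, with the extra $\log R$ from the convexity exponent being unavoidable in a single-shot three-ball application.
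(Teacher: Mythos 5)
Your overall plan --- establish a local three-ball-type estimate with additive error $CR\log^{1/2}R$ and then apply it once at $x_0$, choosing the two larger radii so that the origin lies in the middle ball --- is exactly how the paper deduces Theorem~\ref{local 1} from Theorem~\ref{local main}, and your Step~2 arithmetic (giving the extra factor of $\log R$ from $1/\alpha$, hence the $3/2$ power) is correct.

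However, your description of how Step~1 works contains a genuine misconception. You attribute the additive error $CR\log^{1/2}R$ to the oscillation of the multiplier, asserting $\|\log\phi\|_\infty\leq CR\log^{1/2}R$. In the paper $\varphi$ is built to satisfy $\|\varphi-1\|_\infty\lesssim\varepsilon^2\sim 1/\log R$, so $\log\varphi$ has \emph{tiny} oscillation; producing a positive solution with oscillation $O(R\log^{1/2}R)$ on the whole ball is essentially as hard as Landis' conjecture itself and is precisely what the nodal-set and holes tricks are designed to \emph{avoid}. The $CR\log^{1/2}R$ instead arises from the rescaled harmonic estimate in the punctured domain (Theorem~\ref{local 3}, contributing $e^{CR'}$ with $R'\sim R\sqrt{\log R}$ after the $\varepsilon\sim 1/\sqrt{\log R}$ rescaling). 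Relatedly, after Stoilow factorization the harmonic function $h=f\circ g^{-1}$ is only defined on $B(0,R)\setminus g(F_1)$, so the plain Hadamard three-circle theorem for quasiregular maps cannot be invoked directly; what replaces it is Theorem~\ref{local 3}, which crucially uses the sign condition of $h$ in the annuli $5D_j\setminus D_j$ and the holomorphic function $(u_x-iu_y)/z^k$ with the maximum principle --- this punctured-domain argument is the core innovation and is not a routine three-circle estimate. Finally, your side claim that a ``softer'' additive error $CR$ is available from standard Carleman estimates is not correct; the known Carleman bound for $|V|\leq 1$ on a ball of radius $R$ gives $CR^{4/3}\log R$ (Bourgain--Kenig), and $CR$ is precisely what the open Landis conjecture would give.
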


The previous best known bound $\sup\limits_{B(x_0,1)}|u| \geq  \exp(-CR^{4/3}\log R)$,  was obtained in any dimension by Bourgain and Kenig \cite{BK05} in their proof of Anderson localization for the Bernoulli model, see also \cite{C05}.

Theorem  \ref{local 1} follows from the main local Theorem \ref{local main}, where we don't assume that $|u(0)|=\sup\limits_{B(0,2R)} |u|= 1 $, and prove a version of the three balls inequality.

Landis' conjecture was a subject to an extensive study. Under additional assumptions on $V$, some versions of Landis' conjecture are known, see \cite{B12},\cite{D19}, \cite{DKW19},\cite{EKPV},\cite{C05},\cite{C15},\cite{K98},\cite{Z16} and references therein. A related problem in a cylinder was studied in \cite{G81}.

\textbf{Notation.} By $c,C,C',... >0$ we denote various constants. Typically small constants are denoted by small letters and we  use capital letters for large constants. If a constant $C$ depends on a domain (or some other parameter), we say it.  Sometimes we state theorems without reminding that the functions are assumed to be real-valued and $u$ is a solution to \eqref{eq:schr} on $\mathbb{R}^2$.
A ball with center at $x$ of radius $r$ is denoted by $B(x,r)$ and the two-dimensional Lebesgue measure is denoted by $m_2$. 

\textbf{Acknowledgements.} The authors  
are grateful to Misha Sodin, Alexandru Ionescu, Charles Fefferman and Carlos Kenig 
for fruitful discussions.

This work was completed during the time A.L. served as a Clay Research Fellow and Packard Fellow.
E.M. was partially supported by NSF grant DMS-1956294 and by Research Council of Norway, Project 275113.
F.N. was partially supported by NSF grant DMS-1900008.

 \section{Strategy of the proof and local versions.} \label{Idea}
 The proof consists of three acts. First, we will explain the main ideas of each of them.
 
 \textbf{Description of Act I.} 
We will use  the following well-known fact  about nodal sets, which is proved in the Appendix (Lemma \ref{le: diameter}) for reader's convenience.  There is an absolute constant $r_0>0$ such that if $u$ is a solution to $\Delta u+ Vu$ in a neighborhood of a closed ball $\overline{B(z_0,r)}$ with $|V|\leq 1$, $u(z_0)=0$ and $0<r<r_0$, then the circle
 $ C(z_0,r)=\{ z:|z-z_0|=r\}$
 is intersecting  the zero set of $u$.
 
 It is also true that the singular set $$S=\{ x: u(x)=0 \textup{ and } \nabla u(x)= 0\}$$
 consists of isolated points and  the nodal set 
 $$F_0=\{x:u(x)=0\}$$ is a union of smooth curves, see \cite{CF85}.
 However the proof will not use it, but this structural result about nodal sets makes it easier to think about them.

 Now, assume that $u$ is a solution to \eqref{eq:schr} in $B(0,R)$, $R>1$. Take $\varepsilon>0$ (a small parameter to be chosen later) and add finitely many $C\varepsilon$ -- separated closed  disks of radius $\varepsilon$ to $F_0$ so that the distance from each disk to $F_0$ is $\geq C\varepsilon$
 and 
 $$F_0\cup  \textup{ union of the disks } \cup \{ z:|z|\geq R\}$$
  is a $3C\varepsilon$ -- net on the plane (assume $C>2$). Let us denote by $F_1$ the union of the closed disks, see Figure \ref{puncture fig}.
  
  \begin{figure}[h!]
   \includegraphics[width=0.8\textwidth]{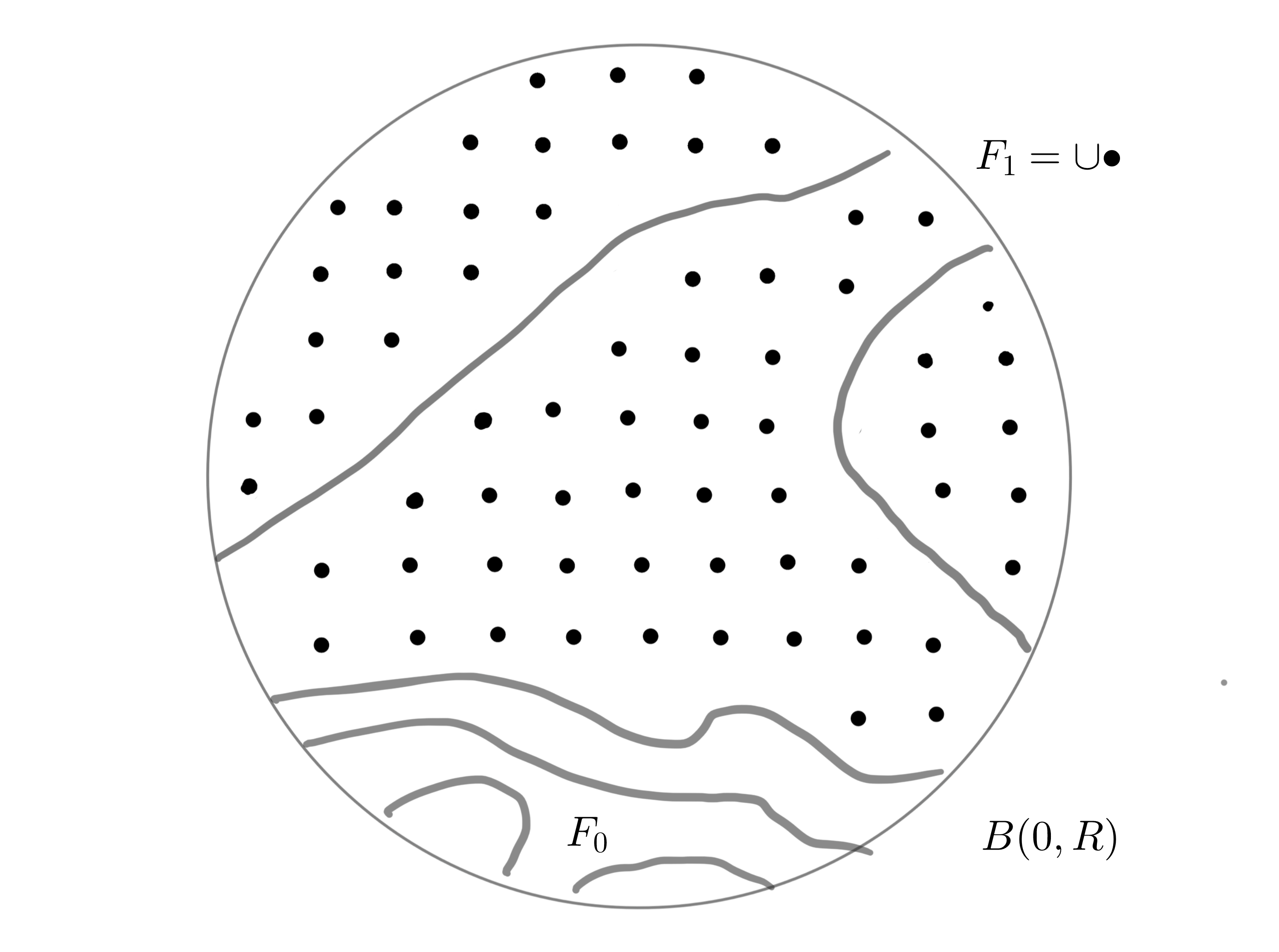}
   \caption{Puncturing nodal domains} \label{puncture fig}
\end{figure}

  It can be shown that 
  $$\Omega=\{z:|z| < R, z \notin F_0\cup F_1 \}$$
  is an open (possibly disconnected) set with the Poincare constant $\leq C'\varepsilon^2$, i.e., for every $u \in W_0^{1,2}(\Omega)$,
  we have 
  $$ \int_{\Omega} u^2 \leq C'\varepsilon^2 \int_{\Omega} |\nabla u|^2.$$ 
  \newpage
It allows one to construct a function $\varphi$ in $B(0,R)$ such that 
\begin{itemize}
\item $\Delta \varphi + V \varphi=0$ in $\Omega$,
\item $\varphi - 1 \in W_0^{1,2}(\Omega)$,
\item $\| \varphi-1\|_{\infty} \leq C''\varepsilon^2$.
\end{itemize}
The details are given in Section \ref{sec:Poincare}.

\textbf{Description of Act II.}
Consider $f= \frac{u}{\varphi}$. Then $f$ satisfies 
$$ {\rm{div}}(\varphi^2 \nabla f)=0$$
in $\Omega$. The set $\Omega$ is usually not connected and  the functions $\varphi$ and $f$  may be not smooth across $F_0$. However due to the fact that $F_0$ is the zero set of $u$ it appears (after some work) that the equation ${\rm{div}}(\varphi^2 \nabla f)=0$ holds through  $F_0$ in the whole $B(0,R)\setminus F_1$. 

 Here the theory of quasiconformal mappings joins the game.
After noticing that $f \in W_{loc}^{1,2}$, we may use the Stoilow factorization theorem  to make a $K$-- quasiconformal change of variables $g$ mapping $0$ to $0$ and 
$B(0,R)$ onto $B(0,R)$ such that 
$$ f =h \circ g$$
where $h $ is a harmonic function in $B(0,R)\setminus g(F_1)$. 
 Moreover, $K$ is very close to $1$ when $\| \varphi - 1 \|_\infty$  is small:  
$$ K \leq \frac{1+\left\|\frac{1-\varphi^2}{1+\varphi^2} \right\|_\infty}{1-\left \|\frac{1-\varphi^2}{1+\varphi^2}  \right \|_\infty}  \leq 1+C \varepsilon^2.$$
Mori's theorem tells us how much the distances are distorted depending on $K$:
\begin{equation} 
 \frac{1}{16}\left|\frac{z_1 - z_2}{R}\right|^{K} \leq \frac{|g(z_1) - g(z_2)|}{R}
  \leq 16\left|\frac{z_1 - z_2}{R}\right|^{1/K}.
\end{equation}
We choose $$ \varepsilon \sim \frac{1}{\sqrt{\log R}}$$
so that the distortion on scales from $
\frac{1}{R}$ to $R$ is bounded and, moreover, the images of the disks in $F_1$ have  size comparable to $\varepsilon$. 

Then we get a harmonic function $h$ in $B(0,R)\setminus g(F_1)$, where $g(F_1)$ is the union of sets of diameter $\sim \varepsilon$ and each set
(the image of a single disk) is surrounded by an annulus of 
width $\sim C\varepsilon$ in which $h$ does not change sign.

\textbf{Description of Act III.}
By rescaling we get the following question:
Let $h$ be harmonic in a punctured domain $B(0,R') \setminus \cup_j D_j$ where $R' \sim \frac{R}{\varepsilon}\sim R \sqrt{\log R}$ and $D_j$ are $1000$-- separated unit disks. Assume also that $h$ does not change sign  in $5D_j\setminus D_j$. What can be said about the decay of $|h|$?

\begin{theorem} \label{exercise 1} Under the above assumptions, we have

$$ \sup_{B(0,R') \setminus \cup_j 3D_j}|h| \leq \exp(CR')
\sup_{ \{ z: R'/8<|z|<R' \} \setminus \cup_j 3D_j} |h| \quad \textup{for } R' > 2000$$
with some absolute constant $C>0$.
\end{theorem}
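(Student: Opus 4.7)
The plan is to combine the maximum principle for harmonic functions on the punctured domain $U := B(0,R') \setminus \bigcup_j D_j$ with Harnack's inequality on each annulus $5D_j \setminus D_j$, and then to iterate the combination along a Harnack chain of length $O(R')$ to produce the factor $\exp(CR')$. For each disk $D_j$ the sign condition says that $|h|$ is positive and harmonic on $5D_j \setminus D_j$, so Harnack's inequality on this annulus of fixed shape yields a universal $C_0>1$ with $\sup_{\partial D_j} |h| \leq C_0 \sup_{(4D_j)\setminus(3D_j)} |h|$. Since $(4D_j)\setminus(3D_j) \subset V := B(0,R') \setminus \bigcup_j 3D_j$, this controls the inner-boundary contributions of the maximum principle by values of $|h|$ on $V$.

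Applied to $h$ on $U$, the maximum principle gives $\sup_U |h| \leq \max\bigl(\sup_{\partial B(0,R')}|h|,\; \sup_j \sup_{\partial D_j}|h|\bigr)$. Controlling $\sup_{\partial B(0,R')}|h|$ by the outer-annulus sup $M:=\sup_{\{R'/8<|z|<R'\}\setminus \bigcup_j 3D_j}|h|$ (via the same Harnack step when a disk is adjacent to the outer boundary) and applying the previous Harnack bound to each $\partial D_j$ yields the relation
$$
\sup_V |h| \;\leq\; \max\bigl(M,\; C_0 \sup_V |h|\bigr),
$$
which is circular because $C_0>1$. To escape this, I iterate: starting from a point $z_0 \in V$ at which $\sup_V |h|$ is almost attained, I build a chain $z_0, z_1, \ldots, z_N$ in $U$ by applying the maximum principle to $h$ on balls $\Omega_k \ni z_k$ of radius $O(1)$. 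Each such ball either is disk-free (in which case the supremum moves to a nearby point without any loss) or crosses a single disk $D_j$ (in which case the $\partial D_j$ contribution is absorbed via the Harnack bound at cost $C_0$); the chain is forced to terminate in the outer annulus by choosing $z_{k+1}$ where $|h|$ attains its local supremum on the boundary of $\Omega_k$, since by the maximum principle on $U$ the function $|h|$ has no interior local maxima exceeding its boundary values. Because the disks are $1000$-separated and the diameter of $B(0,R')$ is at most $2R'$, such a chain can be arranged to have length $N = O(R')$, and accumulating the $C_0$ factors gives $|h(z_0)| \leq C_0^N M = \exp(CR')\, M$.

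The main technical challenge is making the Harnack chain precise: one has to check that every elementary step costs at most a bounded factor $C_0$, that a chain of length $O(R')$ actually reaches the outer annulus, and that the Harnack constant $C_0$ on $5D_j \setminus D_j$ is indeed universal (independent of $j$ and of $R'$, since the annulus has a fixed geometric shape). The $1000$-separation assumption is used twice: it guarantees that unit balls centered at generic points of $U$ are disk-free, so the chain advances freely between disks, and it limits the number of ``disk crossings'' per unit length of the chain to an absolute constant, which is why the per-unit-length amortized cost is bounded and the total factor is $\exp(CR')$ rather than something larger.
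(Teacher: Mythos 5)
Your Harnack-chain proposal is a genuinely different strategy from the paper's, but it has a gap at the crucial ``escape'' step, and I do not see how to repair it as written. The maximum principle does give you a point $z_{k+1}\in\partial\Omega_k$ with $|h(z_{k+1})|\geq|h(z_k)|$, and the sign condition does give a universal Harnack constant $C_0$ on the fixed-shape annuli $5D_j\setminus D_j$, so the per-step bookkeeping is fine. What is missing is any mechanism that forces the chain to move toward the outer annulus. The point of $\partial\Omega_k$ where $|h|$ is maximal can lie in any direction, so the chain can back-track, spiral, or wander in a bounded subregion; the non-decrease of $|h(z_k)|$ is fully compatible with this. The assertion that ``by the maximum principle on $U$ the function $|h|$ has no interior local maxima exceeding its boundary values'' is true but only tells you the chain never stops at an interior point; it does not give any escape rate, and it certainly does not give $N=O(R')$. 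Moreover, the same disk could be encountered multiple times by a wandering chain, so even the count of Harnack losses is not controlled by the diameter.

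What the paper does instead (both in Theorem~\ref{local 3}, from which Theorem~\ref{exercise 1} is deduced, and in the global toy problem of Theorem~\ref{thm: toy}) is to introduce a weight that supplies the monotone direction your chain is missing. In the toy problem, one studies $f=(u_x-iu_y)e^{kz}$ and tracks the quantity $m_je^{k\Re z_j}$ at a well-chosen extremal disk; the weight $e^{kz}$ turns ``move to the right'' into a strictly favorable operation, so the propagation along a rightward ray either escapes or hits a disk with a strictly larger weighted value, a contradiction. In Theorem~\ref{local 3}, the radial analogue $f=(u_x-iu_y)/z^k$ plays the same role with ``move toward the origin'' as the favorable direction, and the argument integrates $\nabla u$ along the radius $[0,z_{j_0}]$. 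The Carleman estimate in the Appendix is a third weighted variant, with the weight $e^{kx_1}$ built into the energy inequality. In all three, the exponential bound $\exp(CR')$ comes precisely from choosing $k\sim R'$ (or $k\sim \log R$ in the Carleman version), and the weight is what converts the soft maximum-principle step into a quantitative one-directional decay. To salvage your approach you would need to replace the unweighted chain by a weighted one (e.g., track $|h(z_k)|/\rho(z_k)$ for a suitable radially increasing $\rho$, or argue directly with $f=(u_x-iu_y)/z^k$ as the paper does); without that, the chain does not terminate.
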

Theorem \ref{exercise 1} is an immediate consequence of a more general Theorem \ref{local 3}.
The outcome is that $|u|$ cannot decay faster than $\exp(-CR\sqrt{\log R})$.
A different proof of the estimate for harmonic functions in a punctured domain (with a slightly worse bound)
is given in the Appendix. The second proof works in higher dimensions and uses the Carleman inequality with log linear weight.

\textbf{Local versions.}
Local versions of Theorem \ref{main} (on the two dimensional plane) are also true. Here is the main local Theorem \ref{local main}.

\begin{theorem} \label{local main}
If $u$ is a solution to $\Delta u + V u =0$ in $B(0, R)$, $R>2$, $V$ is real-valued,
 $|V| \leq 1$, and 
$$\frac{\sup_{B(0,R)}|u|}{\sup_{B(0,R/2)}|u|} \leq e^N, $$
then 
\begin{equation} \label{eq: Br}
\sup_{B(0,r)}|u| \geq  (r/R)^{C (R \log^{1/2}R +N)}\sup_{B(0,R)}|u|
\end{equation}
for any $r<R/4$, where $C$ is an absolute positive constant.
\end{theorem}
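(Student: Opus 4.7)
The plan is to carry out Acts I and II of Section~\ref{Idea} at scale $R$ with the choice $\varepsilon = c/\sqrt{\log R}$ and to reduce the theorem to a local, three-ball version of the harmonic punctured-domain estimate from Theorem~\ref{exercise 1}. Act I applied to $u$ on $B(0,R)$ punctures the nodal set of $u$ by disks of radius $\varepsilon$ forming a set $F_1$ and, via the Poincaré inequality with constant $\lesssim \varepsilon^2$, yields $\varphi$ with $\Delta\varphi + V\varphi = 0$ in $\Omega := B(0,R)\setminus(F_0\cup F_1)$ and $\|\varphi-1\|_\infty \le C'\varepsilon^2 \sim 1/\log R$. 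Setting $f = u/\varphi$, Act II gives $\mathrm{div}(\varphi^2\nabla f) = 0$ across $F_0$ and hence in $B(0,R)\setminus F_1$, so Stoilow factorization produces $f = h\circ g$ with $g:B(0,R)\to B(0,R)$ a $K$-quasiconformal homeomorphism, $K\le 1 + C\varepsilon^2$, $g(0)=0$, and $h$ harmonic in $B(0,R)\setminus g(F_1)$.

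Because $K - 1 \lesssim 1/\log R$, Mori's theorem bounds the distortion of $g$ by an absolute constant on every scale between $1$ and $R$: for $1\le \rho\le R$, $g(B(0,\rho))$ is sandwiched between $B(0,c\rho)$ and $B(0,C\rho)$, and the images $g(D_j)$ of the puncture disks have diameter comparable to $\varepsilon$ and remain $c'\varepsilon$-separated. Rescaling by $1/\varepsilon$ converts $h$ into a harmonic function on $B(0, R') \setminus \bigcup_j D'_j$ with $R' \sim R\sqrt{\log R}$ and $D'_j$ pairwise $1000$-separated unit disks, and $h$ has one sign on $5 D'_j\setminus D'_j$ since each original puncture sat inside a nodal domain of $u$.

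I would then invoke the local three-ball version of the punctured-domain estimate, Theorem~\ref{local 3} (of which Theorem~\ref{exercise 1} is the global corollary), to get, for $r' < R'/4$,
\[
\sup_{B(0,r')\setminus \bigcup 3 D'_j}|h| \;\ge\; \left(\tfrac{r'}{R'}\right)^{C(R' + N')}\sup_{B(0,R')\setminus \bigcup 3D'_j}|h|,
\]
where $N'$ is the log-doubling index of $h$ between $B(0,R'/2)$ and $B(0,R')$. Since $\varphi \approx 1$ and $g$ has bounded distortion on the relevant scales, $N' \le N + O(1)$, and a ball $B(0,r)$ in the $u$-coordinate corresponds, after rescaling by $1/\varepsilon$, to a set sandwiched between $B(0,cr')$ and $B(0,Cr')$ with $r'\sim r/\varepsilon$, so $r'/R'\sim r/R$. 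Writing $u = \varphi\cdot(h\circ g)$ and unwinding the comparisons turns the displayed inequality into \eqref{eq: Br} with exponent $C(R\sqrt{\log R} + N)$.

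The main obstacle will be the local inequality of Theorem~\ref{local 3} in the form used above. Theorem~\ref{exercise 1} supplies only the global statement that $\sup$ over $B(0,R')$ is at most $e^{CR'}$ times $\sup$ over an outer annulus; upgrading it to a three-ball inequality with the additive doubling term $N'$ requires interpolating the annular bound across dyadic scales and combining with a Hadamard-type three-circle inequality, where the one-sided control of $h$ on the collars $5D'_j\setminus D'_j$ plays the essential role at unit scale. A secondary technical point is to arrange the puncture set $F_1$ so that $B(0,r)$ actually captures the supremum of $|u|$ away from $F_1$: this is automatic for $r\gg \varepsilon$ since $F_1$ sits inside nodal domains of $u$, while the range $r\lesssim \varepsilon$ can be handled separately by a small-scale argument based on classical unique continuation and Carleman estimates at scale $1$.
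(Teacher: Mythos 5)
Your reduction matches the paper's architecture: Acts I and II with $\varepsilon \sim 1/\sqrt{\log R}$, leading to a harmonic function on a punctured disk, followed by an application of Theorem~\ref{local 3}. The overall plan is correct, but there are three places where the technical handling diverges from what actually works.

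First, you treat Theorem~\ref{local 3} as an open point to be derived from Theorem~\ref{exercise 1} by ``interpolating the annular bound across dyadic scales and combining with a Hadamard-type three-circle inequality.'' The paper goes the other way: Theorem~\ref{local 3} is proved directly (via the maximum principle applied to the holomorphic function $f=(u_x-iu_y)/z^k$ on the punctured annulus, an extremal-index argument among the disks, and a radial integration step), and Theorem~\ref{exercise 1} is deduced from it as a corollary. A three-circle argument in the classical sense is unavailable here because $h$ lives on a multiply connected set with holes of unit size; it is precisely the role of the one-sign condition on the collars and the Harnack/Cauchy estimates to compensate for the failure of log-convexity, and your proposal does not say how to make the interpolation survive the holes.

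Second, your handling of small $r$ via ``classical unique continuation and Carleman estimates at scale $1$'' is not what is needed and is the wrong tool. The paper handles $r<1/R$ entirely through the quasiconformal distortion: by Mori's inequality $g(B(0,r))$ contains $B(0,\tfrac{R}{16}(r/R)^K)\supset B(0,\tfrac{R}{16}(r/R)^2)$, so the ratio $R'/r'$ worsens only by a square, which is absorbed into the constant $C$ in the final exponent $C(R\log^{1/2}R+N)$. A Carleman estimate for the original Schr\"odinger operator at scale $1$ would re-introduce dependence on $\|V\|_\infty$ in a way that does not immediately combine with the punctured-harmonic estimate and is unnecessary. Third, you do not mention the ``two points to avoid'' step: the paper removes from $F_1$ the puncture disks within $C\varepsilon$ of $0$ and of $z_{\max}$ (where $\sup_{B(0,R/2)}|u|$ is attained). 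This is exactly what guarantees both $0\notin\cup 3D_j$ (a hypothesis of Theorem~\ref{local 3}) and that $g(z_{\max})\in\overline{B(0,R-R/64)}\setminus\cup 3D_j$, so that the doubling hypothesis for $h$ with the correct $N$ can be verified. Your last paragraph gestures at a related concern (that $B(0,r)$ should capture the sup of $|u|$ away from $F_1$), but the concrete mechanism that makes the whole comparison airtight is the deliberate exclusion of punctures near those two points, which you should add explicitly.
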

Theorem \ref{local main} implies Theorems \ref{main} and \ref{local 1}. 
In order to deduce Theorem \ref{main}, we may assume that $|u|$ attains its global maximum at some point on the plane, otherwise $|u|$ does not tend to $0$ near infinity. 
Let $$|u(z_{max})|= \max_{\mathbb{R}^2} |u|=1.$$ Then for any $ R > 6|z_{max}|$ and any $x$ with $|x|=R/3$, we have $$\sup_{B(x,R)}|u| = \sup_{B(x,R/2)}|u|=1$$ and if additionally $R>2$, then by Theorem \ref{local main} applied to $u(\cdot + x)$, we have $$\sup_{B(x,R/4)}|u| \geq e^{-C R \log^{1/2} R}$$ and therefore $$\sup_{|z|>R/12}|u| \geq e^{-C R \log^{1/2} R}.$$
In order to deduce Theorem \ref{local 1} note that $$\sup_{B(x,R/2)}|u| = \sup_{B(x,R)}|u|=1$$
for any $x$ with $|x|=R/2$ because $|u(0)|=\max_{B(0,2R)} |u|$. Applying Theorem \ref{local main}  to $u(\cdot + x)$ we get $$\sup\limits_{B(x,1)}|u| \geq R^{-C R\log^{1/2} R }= e^{-C R\log^{3/2}R}.$$

\begin{corollary} \label{local 2}
Let $A>4$. If $u$ is a solution to $\Delta u + V u =0$ in $B(0, 1)$, $V$ is real-valued,
 $|V| \leq A$, and  
$$\frac{\sup_{B(0,1)}|u|}{\sup_{B(0,1/2)}|u|} \leq \exp(N), $$
 then
\begin{equation} \label{eq:B_r}
\sup_{B(0,r)}|u| \geq  r^{C( \sqrt{A \log A}+N)}\sup_{B(0,1)}|u| \quad \text{ for } r\leq 1/4,
\end{equation}
 where $C$ is an absolute positive constant.
\end{corollary}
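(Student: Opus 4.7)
The plan is to reduce Corollary \ref{local 2} to Theorem \ref{local main} by a simple rescaling that normalizes the $L^\infty$-bound on the potential. Set $R=\sqrt{A}$, so that $R>2$ because $A>4$, and define
\[
\tilde u(y)=u(y/\sqrt{A}),\qquad y\in B(0,\sqrt{A}).
\]
A direct computation gives $\Delta\tilde u(y)+\tilde V(y)\tilde u(y)=0$ with $\tilde V(y)=A^{-1}V(y/\sqrt{A})$, hence $|\tilde V|\le 1$ in $B(0,R)$, which is exactly the setup of Theorem \ref{local main}.

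Next I would translate the hypothesis. Since $\sup_{B(0,s)}|\tilde u|=\sup_{B(0,s/\sqrt{A})}|u|$, the assumption
\[
\sup_{B(0,1)}|u|\le e^N\sup_{B(0,1/2)}|u|
\]
becomes
\[
\sup_{B(0,R)}|\tilde u|\le e^N\sup_{B(0,R/2)}|\tilde u|,
\]
so Theorem \ref{local main} applies to $\tilde u$. For any $r<1/4$, choose $\tilde r=r\sqrt{A}$, which satisfies $\tilde r<R/4$. Theorem \ref{local main} yields
\[
\sup_{B(0,\tilde r)}|\tilde u|\ge (\tilde r/R)^{C(R\log^{1/2}R+N)}\sup_{B(0,R)}|\tilde u|.
\]
Unwinding the rescaling, $\sup_{B(0,\tilde r)}|\tilde u|=\sup_{B(0,r)}|u|$, $\sup_{B(0,R)}|\tilde u|=\sup_{B(0,1)}|u|$, and $\tilde r/R=r$, so
\[
\sup_{B(0,r)}|u|\ge r^{C(\sqrt{A}\,\log^{1/2}\sqrt{A}+N)}\sup_{B(0,1)}|u|.
\]

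Finally I would absorb the constants: $\sqrt{A}\,\log^{1/2}\sqrt{A}=\sqrt{A\log A/2}\le\sqrt{A\log A}$, so after adjusting $C$ the exponent becomes $C(\sqrt{A\log A}+N)$, matching \eqref{eq:B_r}. There is no real obstacle here — the only minor point is verifying that the factor $\sqrt{\log A}$ appears correctly under the scaling (which it does, thanks to $\log\sqrt{A}=\tfrac12\log A$) and that the admissible range of $r$ is preserved under the dilation by $\sqrt{A}$, both of which are routine.
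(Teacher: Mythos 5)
Your proof is correct and is essentially the same as the paper's: rescale by $\sqrt{A}$ to normalize the potential, apply Theorem \ref{local main} with $R=\sqrt{A}$, and absorb the factor $\log^{1/2}\sqrt{A}$ into the constant.
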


For the proof, consider $u(\frac{1}{\sqrt A}\cdot)$ in place of $u$. We obtain a solution to  $\Delta u+ Vu=0$
in $B(0,\sqrt A)$ with $|V| \leq 1$ and
$$\frac{\sup_{B(0,\sqrt A)}|u|}{\sup_{B(0,\sqrt A /2)}|u|} \leq e^{N}$$
and we can apply Theorem \ref{local main} to the new $u$ and $R=\sqrt A$.

\begin{remark}
Inequality \eqref{eq:B_r} implies that the vanishing order of $u$ at $0$ is bounded by $C (\sqrt{A \log A}+N)$.
This question was previously studied in \cite{B12},\cite{K98},\cite{Z16}.
\end{remark}
On any smooth two dimensional Riemannian manifold $(M,g)$ every equation $\Delta_g u+ Vu=0$ can
be simplified in local isothermal coordinates to $\Delta u + V'u=0$ (with ordinary Euclidean Laplacian $\Delta$). Corollary \ref{local 2} gives information on the distribution of solutions to Schrodinger equations on compact  manifolds of dimension 2.
\begin{corollary}\label{global} Let $(M,g)$ be a smooth closed (compact and without boundary) Riemannian manifold of dimension $2$.  Then for any function $u$ satisfying $\Delta_g u + V u =0$ on $M$ with $|V| \leq \lambda$, $\lambda >2$, we have
 $$\sup\limits_{B_r}|u| \geq  r^{C \sqrt{\lambda \log \lambda}}\sup_{M}|u|$$
 for any  ball $B_r$ of radius $r<1/2$. The constant $ C$ depends on the manifold.
\end{corollary}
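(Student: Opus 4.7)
The plan is to reduce Corollary \ref{global} to Corollary \ref{local 2} by passing to isothermal coordinates on a finite atlas of $M$, and then to propagate a pointwise lower bound from a global maximum point of $|u|$ to an arbitrary ball via a chain of overlapping balls.

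\textbf{Isothermal reduction.} Cover $M$ by a finite atlas of isothermal charts $(U_\alpha,\phi_\alpha)$ such that $\phi_\alpha(U_\alpha)\supset B(0,2)\subset\R^2$ and $g|_{U_\alpha}=e^{2\psi_\alpha}(dx^2+dy^2)$ with $\psi_\alpha$ smooth and uniformly bounded. In these coordinates $\Delta_g=e^{-2\psi_\alpha}\Delta$, so the equation becomes $\Delta u+V'u=0$ with $|V'|=e^{2\psi_\alpha}|V|\le C(M)\lambda$. Moreover, $g$-geodesic balls and Euclidean balls of comparable radius in the chart differ only by a multiplicative constant that depends on $(M,g)$ but not on $\lambda$. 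Consequently Corollary \ref{local 2} applies on every chart with $A=C(M)\lambda$, and since $\sqrt{A\log A}\lesssim_{M}\sqrt{\lambda\log\lambda}$ the Euclidean conclusion already has the right form.

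\textbf{Seed and chain propagation.} Normalize $\sup_M|u|=1$ and choose $x^*\in M$ with $|u(x^*)|=1$. In a chart around $x^*$ one has $\sup_{B(x^*,1/2)}|u|\ge|u(x^*)|=1=\sup_{B(x^*,1)}|u|$, so the input $N$ in Corollary \ref{local 2} vanishes and we obtain $\sup_{B(x^*,r)}|u|\ge r^{C_1\sqrt{\lambda\log\lambda}}$ for $r\le 1/4$. For an arbitrary $y\in M$, connect $x^*$ to $y$ by a chain $x^*=y_0,y_1,\dots,y_n=y$ of length $n\le n_0(M)$, where the step size $\delta$ is a fixed geometric constant chosen so that $B(y_i,1/4)\subset B(y_{i+1},1/2)$ for every $i$. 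A straightforward induction, in which at step $i+1$ the previous lower bound $\sup_{B(y_i,1/4)}|u|\ge e^{-M_i}$ automatically furnishes a doubling ratio $N(y_{i+1})\le M_i$ for the ball $B(y_{i+1},1)$, and one then applies Corollary \ref{local 2} at $y_{i+1}$, yields $\sup_{B(y,1/4)}|u|\ge\exp(-C_2 n_0\sqrt{\lambda\log\lambda})$. A final application of Corollary \ref{local 2} at $y$ (with $N(y)\le C_2 n_0\sqrt{\lambda\log\lambda}$) converts this into the desired bound $\sup_{B(y,r)}|u|\ge r^{C_3\sqrt{\lambda\log\lambda}}$ for $r\le 1/4$; the remaining range $1/4<r<1/2$ is absorbed into $C_3$.

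\textbf{Main obstacle.} The delicate point is to arrange the isothermal atlas and the chain so that the atlas size, the number of steps $n_0$, the overlap parameter $\delta$, and the conformal factors $e^{\psi_\alpha}$ are all \emph{independent of} $\lambda$. Compactness of $M$ and smoothness of $g$ make this possible, but one must track scales carefully so that no $\lambda$-dependent constant sneaks into the exponent and inflates $\sqrt{\lambda\log\lambda}$. Once this geometric bookkeeping is done, every constant appearing in the argument depends only on $(M,g)$, as required.
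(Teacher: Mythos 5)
Your proposal is correct and follows essentially the same route as the paper, which simply invokes Corollary~\ref{local 2} together with the chain-of-balls iteration from Donnelly--Fefferman (cited as \cite{DF}, p.~162, after (1.5)); you have just spelled that iteration out explicitly. One small imprecision worth flagging: the recursion $M_{i+1}\approx C(\sqrt{\lambda\log\lambda}+M_i)$ that you set up grows \emph{geometrically} in the chain length, not linearly, so the intermediate bound should read $\sup_{B(y,1/4)}|u|\geq\exp(-C_2^{\,n_0}\sqrt{\lambda\log\lambda})$ rather than $\exp(-C_2\,n_0\sqrt{\lambda\log\lambda})$ --- but since $n_0$ depends only on $(M,g)$, the resulting constant is still manifold-dependent and the conclusion is unaffected.
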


This result follows from Corollary \ref{local 2}  by iterations (see the argument in \cite{DF}, page 162, after formula (1.5)).
\begin{remark}
A slightly better bound was obtained in \cite{DF} by Donnelly and Fefferman for Laplace eigenfunctions on closed Riemannian manifolds of any dimension. If $\Delta_g u+ \lambda u=0$ on $(M,g)$, then
$$ \sup_{B_r} |u| \geq c  r^{C \sqrt \lambda }  \sup_M |u|, \quad r \leq \frac 1 2.$$
So the vanishing order at any point is at most $C\sqrt \lambda$.
\end{remark}

In Act I and Act II we will reduce (with a logarithmic loss) the main local Theorem \ref{local main}
to a general Theorem \ref{local 3}, which is  a local  statement about two dimensional harmonic functions.

\section{Act I} \label{sec:Poincare}

\subsection{Poincare constant for porous domains.}

\begin{lemma} \label{Poincare}
Let $F$ be a closed set in $B(0,R)$, $R>1$, such that
\begin{enumerate}
\item[a)] For every $z_0 \in F$, $r\in (0,1]$,
the circle
 $ C(z_0,r)=\{ z:|z-z_0|=r\}$
 intersects  $F\cup \partial B(0,R)$. 
\item[b)] $ F \cup \partial B(0,R)$ is $C-$dense in $B(0,R)$, $C>1$.
\end{enumerate}
Then the Poincare constant of $\Omega= B(0,R)\setminus F$
is bounded by some constant $\widetilde C$ that depends only on $C$.
\end{lemma}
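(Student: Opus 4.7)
The plan is to cover $B(0,R)$ by balls of radius $\rho\asymp C$, establish a local Poincare inequality on each such ball with constant depending only on $C$, and then sum with finite overlap. The local inequality itself will come from a unit-scale estimate near any point of $F\cup\partial B(0,R)$ (using condition (a) together with a one-dimensional Wirtinger inequality along circles), followed by a chain-of-averages argument that upgrades to radius $\rho$.

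For the unit-scale step, given $u\in W_0^{1,2}(\Omega)$, I would extend $u$ by zero to all of $\mathbb{R}^2$; since $W_0^{1,2}(\Omega)$ is the closure of $C_c^\infty(\Omega)$ in $W^{1,2}$, this extension lies in $W^{1,2}(\mathbb{R}^2)$ and vanishes quasi-everywhere on $F\cup(\mathbb{R}^2\setminus B(0,R))$. Fix $z_0\in F$. For almost every $r\in(0,1]$ the restriction of $u$ to the circle $C(z_0,r)$ is absolutely continuous, and by condition (a) this restriction has a zero at the point where the circle meets $F\cup\partial B(0,R)$. The one-dimensional Wirtinger inequality for a $W^{1,2}$ function on a circle with at least one zero gives
\[
\int_0^{2\pi} u(z_0+re^{i\theta})^2\, d\theta \le (2\pi)^2 \int_0^{2\pi} |\partial_\theta u(z_0+re^{i\theta})|^2\, d\theta.
\]
Multiplying by $r\, dr$, integrating over $r\in(0,1]$, and using $|\partial_\theta u|\le r|\nabla u|$ yields
\[
\int_{B(z_0,1)} u^2 \le (2\pi)^2 \int_{B(z_0,1)} |\nabla u|^2.
\]
The same inequality holds verbatim when $z_0\in\partial B(0,R)$, since the extended $u$ vanishes identically outside $B(0,R)$.

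To scale up, I would cover $B(0,R)$ by balls $B_j=B(y_j,2C)$ with bounded overlap; by condition (b) each $B_j$ contains a point $z_j^\ast\in(F\cup\partial B(0,R))\cap B(y_j,C)$, and the unit-scale estimate applies on $B(z_j^\ast,1)\subset B_j$. Combining this with the standard Poincare-Wirtinger inequality on $B_j$ and a chain of Poincare-Wirtinger estimates through $O(C)$ overlapping unit balls connecting $B(z_j^\ast,1)$ to the center $y_j$, one controls the mean of $u$ on $B_j$ by its mean on $B(z_j^\ast,1)$ (which is small by the unit-scale step) plus an $L^2$-gradient error. This yields a local estimate $\int_{B_j} u^2\le\widetilde C(C)\int_{B_j}|\nabla u|^2$, and summing over $j$ gives the claimed global Poincare inequality.

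The main obstacle is the upgrade from unit scale to scale $C$: condition (a) supplies porosity only at scales $\le 1$, so the circle-integration trick cannot be applied directly at scale $C$, and the chain-of-averages step must be organized carefully to produce a constant depending only on $C$ and not on $R$. A dimensional check suggests the expected scaling is $\widetilde C\asymp C^2$.
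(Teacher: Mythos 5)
Your unit-scale step matches the paper's in spirit (a zero on almost every circle around $z_0\in F\cup\partial B(0,R)$, then a one-dimensional estimate along circles, integrated in $r$), though the paper opts for the cruder $\max_{C_r}|f|\le\int_{C_r}|\nabla f|$ rather than Wirtinger; either works. Where you genuinely diverge is the upgrade from radius $1$ to radius $\asymp C$. The paper does this in one shot via polar coordinates: it selects a good radius $r\in(1/2,1)$ on which the $L^2$ circle-norm of $f$ is controlled, and then integrates $|\nabla f|$ along each radial segment $\Gamma_\psi$ from that circle out to radius $3C$, obtaining $\int_{B(z,3C)\setminus B(z,1)}f^2\lesssim_C\int_{C_r}|f|^2+\int_{B(z,3C)}|\nabla f|^2$. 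Your chain-of-averages route (hop through $O(C)$ overlapping unit balls, applying Poincar\'e--Wirtinger on each, and track the drift of the mean) also works and is a standard device, but it is heavier machinery for the same payoff; the paper's radial-segment trick is more direct and makes the dependence on $C$ transparent. Your estimate $\widetilde C\asymp C^2$ matches what the paper's argument yields.

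One point to tighten: you work directly with $u\in W_0^{1,2}(\Omega)$ and invoke the fact that the zero extension ``vanishes quasi-everywhere'' on $F$ to conclude that the circle restriction has a zero at the point of $C(z_0,r)\cap(F\cup\partial B(0,R))$. That intersection can be a single point, which has zero capacity in the plane, so the quasi-everywhere statement does not by itself pin down the value of a trace there. The clean fix is exactly what the paper does: prove the inequality for $f\in C_0^\infty(\Omega)$ (extended by zero, hence vanishing in the classical sense on all of $F\cup\partial B(0,R)$), and then pass to $W_0^{1,2}(\Omega)$ by density, since the Poincar\'e inequality is closed under $W^{1,2}$-limits. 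With that adjustment your argument is sound.
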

\begin{proof}
Let $f \in C_0^\infty(\Omega)$. Extend $f$ by zero outside $\Omega$.
First, we will show that if $z\in F\cup \partial B(0,R)$, then
$$ \int_{B(z,3C)} |f|^2 \lesssim \int_{B(z,3C)} |\nabla f|^2 .$$
Every circle $C_r=\partial B(z,r)$, $r\in(0,1)$, has a zero of $f$, whence
$$\max\limits_{C_r}|f| \leq \int\limits_{C_r }|\nabla f|$$
and 
$$\int\limits_{B(z,1)}|f|^2 = \int_0^1 \left(\,\int\limits_{C_r}|f|^2\right) dr \leq \int_0^1 |C_r|\max\limits_{C_r}|f|^2dr \leq  \int_0^1 |C_r|\left(\,\int\limits_{C_r }|\nabla f|\right)^2 dr \leq$$
$$ \leq \int_0^1 |C_r|^2\left(\,\int\limits_{C_r }|\nabla f|^2\right) dr \leq (2\pi)^2 \int_0^1 \left(\,\int\limits_{C_r }|\nabla f|^2\right)dr = (2\pi)^2 \int\limits_{B(z,1)} |\nabla f|^2.  $$
We therefore can find $r\in (1/2,1)$ such that 
$$ \int\limits_{C_r }|f|^2 \leq C_1 \int\limits_{B(z,1)}|f|^2 \leq C_2 \int\limits_{B(z,1)} |\nabla f|^2.$$
Let $\Gamma_\psi$, $\psi \in[0,2\pi)$, be a segment starting at the point $$x_\psi:= z + re^{i\psi}$$
and ending at the point $z+3Ce^{i\psi}$. Note that
$$ \max_{\Gamma_\psi} |f|^2 \leq \left(|f(x_\psi)| + \int_{\Gamma_\psi}|\nabla f|\right)^2 \leq 2|f(x_\psi)|^2+2\left(\int_{\Gamma_\psi}|\nabla f|\right)^2 \leq$$
$$ \leq 2 |f(x_\psi)|^2 + 2|\Gamma_\psi|\int_{\Gamma_\psi}|\nabla f|^2 \leq 2 |f(x_\psi)|^2 + 6C \int_{\Gamma_\psi}|\nabla f|^2$$
and therefore
$$ \int\limits_{ B(z,3C)\setminus B(z,1)} f^2 \leq  (3C)^2 \int_0^{2\pi} \max_{\Gamma_\psi} |f|^2 d\psi  \leq $$ $$ \leq C_1(C) \left[ \quad \int\limits_{C_r }|f|^2 +  \int_0^{2\pi} \left(\int_{\Gamma_\psi}|\nabla f|^2\right) d\psi \right] \leq C_2(C) \int_{B(z,3C)} |\nabla f|^2. $$
Thus  $$ \int_{B(z,3C)} |f|^2 \leq C_3(C) \int_{B(z,3C)} |\nabla f|^2 .$$

We can choose a finite collection $Z_*$ of points $z$ in $F \cup \partial B(0,R)$ such that the balls $B(z,3C)$ cover $B(0,R)$ and
each point is covered a bounded number of times.
Finally, we have
$$ \int\limits_{B(0,R)} f^2 \leq \sum\limits_{z\in Z_*} \int_{B(z,3C)} |f|^2 \leq C_3(C) \sum\limits_{z\in Z_*} \int_{B(z,3C)} |\nabla f|^2 \leq $$ 
$$\leq C_4(C) \int\limits_{B(0,R)} |\nabla f|^2.$$
\end{proof}

 We start proving Theorem \ref{local main}.
Recall that $\Delta u + Vu =0$ in the ball $B(0,R)$ (we may think that $R$ is a large number) and $F_0$ is the zero set of $u$. 
We will use the fact that $u\in C^1(B(0,R))$, which is proved in the Appendix, see Fact \ref{fact5}.
 Now, consider the following setting:
 \begin{figure}[h!] \label{contours}
    \includegraphics[width=0.8\textwidth]{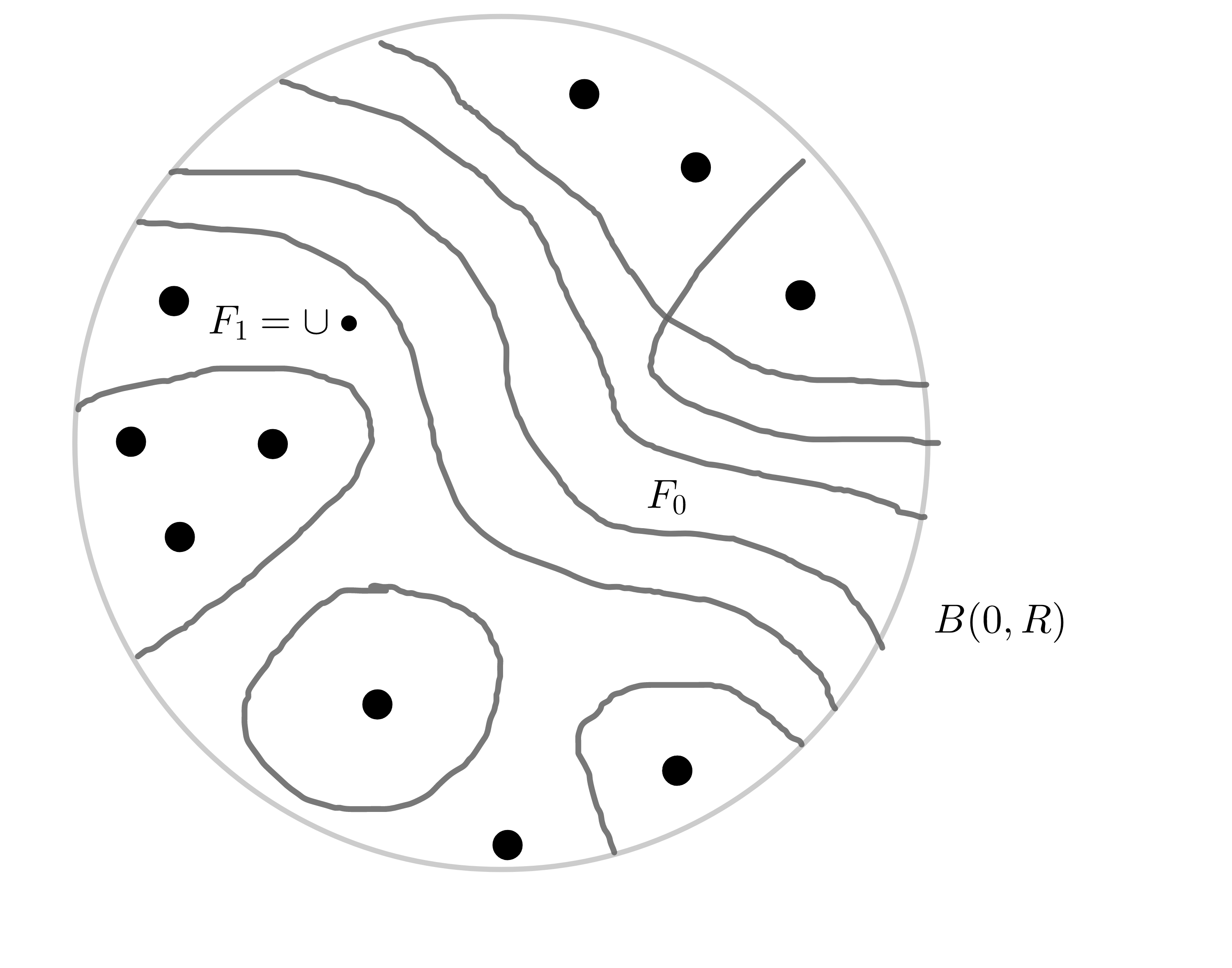}
   \caption{Puncturing nodal domains}
\end{figure}

\noindent Take $\varepsilon>0$ (a small parameter to be chosen later). Choose finitely many $C\varepsilon$ -- separated closed disks of radius $\varepsilon$, whose union will be denoted by $F_1$,  so that the distance from each disk to $F_0$ and $\partial B(0,R)$ is $\geq C\varepsilon$
 and  $$F_0\cup F_1\cup \partial B(0,R)$$
  is a $3C\varepsilon$ -- net in $B(0,R)$ (we assume $C>2$). \\

   For instance, one can get $F_1$ by 
  considering  the maximal number of open non-intersecting disks of radius $(C+1)\varepsilon$ in $B(0,R)\setminus F_0 $.  The centers of the disks are $(2C+2)\varepsilon$ -- separated.  There is no point $x$ in $B(0,R)\setminus F_0$ that is $(2C+2)\varepsilon$ far from the centers of the disks and from $F_0 \cup \partial B(0,R)$, otherwise we could add one more disk of radius $(C+1)\varepsilon$ with center at this point. So we may choose the disks of radius $\varepsilon>0$ with the same centers, they will be $C\varepsilon$ -- separated and $F_0\cup F_1\cup \partial B(0,R)$ will be a $2(C+1)\varepsilon$ -- net.
  
 \noindent \textbf{Two points to avoid.} Now, let us remove from $F_1$ the disks that are $C\varepsilon$ close to $0$ or to the point $z_{\max} \in \overline{B(0,R/2)}$ such that $$|u(z_{\max})|= \sup_{B(0,R/2)}|u|.$$ The set $F_0\cup F_1\cup  \partial B(0,R) $ will still be a $10C\varepsilon$ -- net, but now all disks from $F_1$ are also  $C\varepsilon$- separated from $0$ and $z_{\max}$.
  The detail about avoiding those two points will be used only in the end of Act II.
 
 Recall that $F_0$ has the property that for any $z_0 \in F_0$, every circle $C(z_0,r)$ with $r<r_0$ intersects
  $F_0$ or $\partial B_R$. Taking $u(\varepsilon \cdot)$ in place of $u$ (so the assumptions of Lemma \ref{Poincare} hold for $\varepsilon< r_0$) and applying Lemma \ref{Poincare} we arrive to the following conclusion. \\
 \noindent\textbf{Outcome.} 
 The domain $$\Omega= B(0,R)\setminus (F_0 \cup F_1)$$
   has Poincare constant $\leq  C'\varepsilon^2$ and $B(0,R)\setminus F_1$ contains $0$ and $z_{\max}$.
  
\subsection{Solving $\Delta \varphi+ V\varphi=0$.}
The goal of this section is to construct an auxiliary solution to \eqref{eq:schr} in a domain with a small Poincare constant, so that the solution has boundary values $1$ and is uniformly close to $1$. 
\begin{lemma} \label{solving Schrodinger}
Let $\Omega$ be a bounded open set with the Poincare constant $k^2$. Let $V\in L^\infty(\Omega)$. Assume that 

$$k^2 \|V\|_\infty \ll 1.$$
Then there exists $\varphi=1+\tilde \varphi$ with
 $$\tilde \varphi \in W_{0}^{1,2}(\Omega), \|\tilde \varphi\|_\infty \leq Ck^2\|V\|_\infty$$
 such that $\varphi$ is a weak solution to $\Delta \varphi + V\varphi=0$ in $\Omega$, where $C$ is an absolute positive constant.
 \end{lemma}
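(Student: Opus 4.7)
The plan is to pose $\Delta\varphi+V\varphi=0$ with boundary value $1$ in weak form, obtain existence of $\tilde\varphi=\varphi-1\in W_0^{1,2}(\Omega)$ by Lax--Milgram, and then bootstrap to the $L^\infty$ bound by a fixed-point argument based on a sup-norm estimate for the torsion function of $\Omega$.

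\textbf{Existence (Lax--Milgram).} With $\varphi=1+\tilde\varphi$, one seeks $\tilde\varphi\in W_0^{1,2}(\Omega)$ such that
\[
\int_\Omega\nabla\tilde\varphi\cdot\nabla\psi-\int_\Omega V\tilde\varphi\,\psi=\int_\Omega V\psi\qquad\forall\,\psi\in W_0^{1,2}(\Omega).
\]
The bilinear form on the left, $a(u,v)$, is bounded; by Poincar\'e, $\bigl|\int Vu^2\bigr|\leq k^2\|V\|_\infty\int|\nabla u|^2$, so $a(u,u)\geq\tfrac{1}{2}\|\nabla u\|_{L^2}^2$ under the smallness hypothesis. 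The right-hand side $\psi\mapsto\int V\psi$ is a bounded linear functional on $W_0^{1,2}(\Omega)$ by Poincar\'e. Lax--Milgram then gives existence and uniqueness of $\tilde\varphi$, together with the energy bound $\|\nabla\tilde\varphi\|_{L^2}\lesssim k\|V\|_\infty|\Omega|^{1/2}$.

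\textbf{From $W_0^{1,2}$ to $L^\infty$.} Let $S\colon L^\infty(\Omega)\to L^\infty(\Omega)$ send $f$ to the $W_0^{1,2}$ solution of $-\Delta u=f$. The key assertion is $\|S\|_{L^\infty\to L^\infty}\leq Ck^2$. Granted this, the map $T\psi=S\bigl(V(1+\psi)\bigr)$ is a contraction on $L^\infty(\Omega)$ with Lipschitz constant $\leq Ck^2\|V\|_\infty\ll 1$, and its unique fixed point satisfies
\[
\|\tilde\varphi\|_\infty\leq\|S\|\,\|V\|_\infty\bigl(1+\|\tilde\varphi\|_\infty\bigr)\leq Ck^2\|V\|_\infty\bigl(1+\|\tilde\varphi\|_\infty\bigr),
\]
whence $\|\tilde\varphi\|_\infty\leq 2Ck^2\|V\|_\infty$. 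By uniqueness in Step~1 this fixed point is the Lax--Milgram solution. The bound on $S$ reduces, via the maximum-principle comparison $|Sf|\leq\|f\|_\infty\,w$, to the torsion-function estimate $\|w\|_\infty\leq Ck^2$, where $w=S(1)$ solves $-\Delta w=1$ with $w\in W_0^{1,2}(\Omega)$.

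\textbf{Main obstacle.} The torsion-function bound $\|w\|_\infty\leq Ck^2$ is the crux, since the $L^2$ Poincar\'e inequality does not upgrade to an $L^\infty$ estimate by a soft argument. Testing $-\Delta w=1$ against $(w-t)_+$ and applying Poincar\'e to $(w-t)_+$ yields the De~Giorgi-type inequality $\int(w-t)_+\leq k^2|\{w>t\}|$, but closing the iteration in dimension two requires either a Trudinger--Moser input $\|g\|_{L^p(\Omega)}\lesssim\sqrt{p}\,|\Omega|^{1/p}\|\nabla g\|_{L^2}$ combined with a careful De~Giorgi iteration, or an appeal to the classical universal bound $\lambda_1\|w\|_\infty\leq C$. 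In the paper's concrete context, the domain $\Omega=B(0,R)\setminus(F_0\cup F_1)$ has the geometric feature that every point lies within $O(\varepsilon)$ of the obstacle set, so a local barrier argument on scale $\varepsilon$ gives $w\lesssim\varepsilon^2$ pointwise and matches $k^2\sim\varepsilon^2$ from Lemma~\ref{Poincare}, so one can avoid the general torsion-function inequality here.
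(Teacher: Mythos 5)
Your architecture is sound and essentially equivalent to the paper's. The paper constructs $\tilde\varphi$ as a Neumann series $\varphi_1+\varphi_2+\cdots$ with $\Delta\varphi_n=-V\varphi_{n-1}$, which is precisely the Picard iteration for your contraction $T\psi=S(V(1+\psi))$; the Lax--Milgram existence and the identification with the fixed point are correct and match what the paper does implicitly. Your reduction of the $L^\infty$ bound to the estimate $\|S\|_{L^\infty\to L^\infty}\le Ck^2$, and in turn to the torsion-function bound $\|w\|_\infty\le Ck^2$, is also the right reduction and is stated explicitly in the paper as the ``Fact'' at the start of its proof.

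The genuine gap is exactly the one you flag as the ``main obstacle'': you never actually prove $\|w\|_\infty\le Ck^2$. This estimate is the entire technical content of the lemma -- everything else in both your proof and the paper's is routine functional analysis once it is in hand. You offer three escape routes, none of which closes the argument at the level of generality the lemma requires. The Trudinger--Moser $+$ De~Giorgi sketch stops at the single inequality $\int(w-t)_+\le k^2|\{w>t\}|$ and does not run the iteration; appealing to ``the classical universal bound $\lambda_1\|w\|_\infty\le C$'' is a legitimate citation in principle, but the paper evidently regards this as not citable without proof and devotes an entire appendix section (``Uniform bound via Di~Giorgi method'', culminating in Lemma~\ref{lem: solving2}) to establishing it via a careful truncation-and-iteration argument using $\varepsilon$-minimizers of $\Phi(u)=\int|\nabla u|^2+\int vu$ and the thin-domain Poincar\'e bound of Lemma~\ref{le: thin} and Corollary~\ref{cor: thin}; and the barrier argument in the concrete porous geometry $\Omega=B(0,R)\setminus(F_0\cup F_1)$ would prove a weaker statement than the lemma, which is asserted for an arbitrary bounded open set with small Poincar\'e constant. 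So as written the proposal is an accurate outline with a correctly located hole, but the lemma is not proved until you supply (or properly cite) a quantitative De~Giorgi/Moser argument giving the $L^\infty$ bound for the Dirichlet problem $-\Delta w=1$ purely from the Poincar\'e hypothesis.
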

 \begin{proof}
 
 We will use the following fact, which is proved in the Appendix, Lemma \ref{lem: solving} and Lemma \ref{lem: solving2}.
 
\noindent \textbf{Fact.} When the Poincare constant  of $\Omega$ is $ 1$, $v\in L^{\infty}(\Omega)$, there is a solution $\varphi$ to $\Delta \varphi = v$ in $W_{0}^{1,2}(\Omega)$ with $$\| \varphi\|_\infty \leq C\| v\|_\infty$$
 and 
 $$ \| \varphi \|_{W_0^{1,2}} \leq C \|v\|_2.$$

\noindent \textbf{Corollary} (follows by rescaling). If $\Omega$ has Poincare constant $k^2$, then we can find a solution $\varphi$ to $\Delta \varphi = v$ with
$$\| \varphi \|_\infty \leq Ck^2\| v\|_\infty$$
and 
$$ \| \varphi \|_{W_0^{1,2}} \leq  C_1(k) \|v\|_2. $$

Now, let $\varphi_1$ solve $\Delta \varphi_1= -V$ and for $n\geq 2$ 
let $\varphi_n$ solve 
$$\Delta \varphi_n= -V\varphi_{n-1}.$$
Note that this sequence is well defined since on each step the right-hand side is in $L^\infty$.
 We have $$\|\varphi_n\|_\infty\leq Ck^2\|V\|_\infty\|\varphi_{n-1}\|_\infty,\quad n\geq 2,$$
 and $\|\varphi_1\|_\infty \leq Ck^2 \|V\|_\infty.$ 
 We are assuming that $Ck^2 \|V\|_\infty \leq 1/2$.
 Hence $\|\varphi_n\|_\infty \leq 2^{-n+1} Ck^2\|V\|_\infty $ 
 and 
 $$\|\varphi_n\|_{W_0^{1,2}} \leq  C_1(k) \|\varphi_{n-1}\|_2 \leq C_2(k) \|\varphi_{n-1}\|_{\infty}  \leq C_3(k) 2^{-n}.$$
 
 Thus the series $$\tilde \varphi = \varphi_1+\varphi_2+\dots$$
 converges both in $L^{\infty}$ and in $W_0^{1,2}(\Omega)$
 with $$\| \tilde \varphi\|_{\infty} \leq C'k^2 \|V\|_\infty.$$ 
 Also  for any  $h \in W_0^{1,2}(\Omega)$, we have $\int \nabla \varphi_n \nabla h = \int V \varphi_{n-1}h$ for $n \geq 2$ and $\int \nabla \varphi_1 \nabla h = \int Vh$.
 Thus $\Delta \tilde \varphi= - V(1+\tilde \varphi)$ and 
 $$\Delta(1+\tilde \varphi) + V(1+\tilde \varphi) =0 \quad \text{ in } \Omega $$  as required.
 
 \end{proof}

\noindent \textbf{Outcome.} Since the Poincare constant of $\Omega= B(0,R) \setminus (F_0 \cup F_1)$ is $\leq \widetilde C\varepsilon^2$,  using Lemma \ref{solving Schrodinger}, we can find  $\varphi$ such that
\begin{itemize}
\item $\Delta \varphi + V \varphi=0$ in $\Omega$,
\item $\varphi - 1 \in W_0^{1,2}(\Omega)$,
\item $\| \varphi-1\|_{\infty} \leq C'\varepsilon^2$.
\end{itemize}

\section{Act II.}

\subsection{Reduction to a divergence type equation in a domain with holes.}
 Recall that $u$ is a solution to $\Delta u+ Vu=0$ in $B(0,R)$ and $F_0$ is the zero set of $u$.
 Extend the function $\varphi$   by 1 outside $$\Omega=B(0,R) \setminus (F_0\cup F_1).$$

\begin{lemma} \label{div equation}

The function $\frac{u}{\varphi} \in W^{1,2}_{loc}(B(0,R))$ and it is a solution to $$\textup{ div}(\varphi^2 \nabla (\frac{u}{\varphi})) = 0$$ in 
$B(0,R) \setminus  F_1$ in the weak sense.

\end{lemma}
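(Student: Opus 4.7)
The plan is to perform a formal computation and then justify it weakly. Since $\|\varphi - 1\|_\infty \leq C'\varepsilon^2 < 1/2$, we have $\varphi \geq 1/2$, so $f := u/\varphi$ is well-defined and bounded. In any region where both $u$ and $\varphi$ are classical solutions one has
\begin{equation*}
\varphi^2 \nabla(u/\varphi) = \varphi \nabla u - u \nabla \varphi, \qquad \dv(\varphi \nabla u - u \nabla \varphi) = \varphi \Delta u - u \Delta \varphi = -Vu\varphi + V\varphi u = 0.
\end{equation*}
The surprise of the lemma is that this identity persists weakly across $F_0$, where $\varphi$ is only Sobolev; the reason is that $u$ vanishes on $F_0$.

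First I would verify $f \in W^{1,2}_{loc}(B(0,R))$. The formula $\nabla f = \nabla u/\varphi - u\nabla\varphi/\varphi^2$ writes $\nabla f$ as the sum of an $L^\infty_{loc}$ term (using $u \in C^1$ from Fact \ref{fact5} and $\varphi$ bounded below) and an $L^2_{loc}$ term (using $\nabla\varphi \in L^2_{loc}$ from $\varphi - 1 \in W^{1,2}_0(\Omega)$, and $u$ bounded).

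Next, fix $\eta \in C_c^\infty(B(0,R) \setminus F_1)$; the goal is $\int (\varphi\nabla u - u\nabla\varphi)\cdot\nabla\eta = 0$. Since $\varphi\eta$ has compact support in $B(0,R)$ and lies in $W^{1,2}\cap L^\infty$, it is admissible in the weak formulation of $\Delta u + Vu = 0$ on $B(0,R)$, giving
\begin{equation*}
\int \nabla u \cdot (\varphi \nabla\eta + \eta \nabla\varphi) = \int V u \varphi \eta.
\end{equation*}
Symmetrically, once we have $u\eta \in W^{1,2}_0(\Omega)$, we may test $\Delta\varphi + V\varphi = 0$ on $\Omega$ against $u\eta$, obtaining
\begin{equation*}
\int \nabla\varphi \cdot (u \nabla\eta + \eta \nabla u) = \int V\varphi u \eta.
\end{equation*}
Subtracting, the $\eta \nabla u \cdot \nabla\varphi$ terms and the right-hand sides cancel, leaving $\int(\varphi\nabla u - u\nabla\varphi)\cdot \nabla\eta = 0$, which is the desired weak form of $\dv(\varphi^2\nabla f) = 0$ on $B(0,R)\setminus F_1$.

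The hardest step is establishing $u\eta \in W^{1,2}_0(\Omega)$; this is precisely the ingredient that lets the identity hold on all of $B(0,R)\setminus F_1$ rather than just on $\Omega$. The support of $\eta$ is already compactly contained in $B(0,R)\setminus F_1$, so only the portion of $\partial\Omega$ lying in $F_0$ needs to be handled. I would approximate $u\eta$ by $u\eta\chi_\delta$, where $\chi_\delta$ is a smooth cutoff vanishing on the $\delta/2$-neighborhood of $F_0$, equal to $1$ outside its $\delta$-neighborhood, and satisfying $|\nabla\chi_\delta| \lesssim 1/\delta$. Then $u\eta\chi_\delta$ has compact support in $\Omega$. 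Since $F_0$ has Lebesgue measure zero, $u\eta(1-\chi_\delta) \to 0$ in $L^2$; in the gradient, the delicate cross term $u\eta\nabla\chi_\delta$ is controlled by the Lipschitz bound $|u(x)| \leq C\dist(x,F_0) \leq C\delta$ on the support of $\nabla\chi_\delta$ (from $u\in C^1$ vanishing on $F_0$) together with $|\nabla\chi_\delta|\leq C/\delta$, producing a bounded integrand on a set of vanishing measure; the remaining term $(1-\chi_\delta)\nabla(u\eta)$ is bounded on the same shrinking set. A subsequent mollification yields approximants in $C_c^\infty(\Omega)$, completing the verification.
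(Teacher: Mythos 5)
Your proposal is correct and follows essentially the same route as the paper: test the $u$-equation against $\varphi\eta$, test the $\varphi$-equation against $u\eta$, subtract, and justify the second test by showing that $u\eta\in W^{1,2}_0(\Omega)$ via a cutoff near $F_0$ whose troublesome term $u\eta\nabla\chi_\delta$ is controlled precisely by the Lipschitz bound $|u|\lesssim\dist(\cdot,F_0)$ coming from $u\in C^1$ vanishing on $F_0$. The paper packages this slightly differently---it keeps the cutoff $\xi$ inside the weak identity for $\Delta\varphi+V\varphi=0$ and passes to the limit term by term rather than first asserting $u\eta\in W^{1,2}_0(\Omega)$---but the underlying estimates are identical.
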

\noindent \textbf{Remark.} The  lemma takes care of all ``continuations through nodal lines" of $u$.
\begin{proof}

 First, we would like to notice that the extended functions
  $\frac{1}{\varphi}, \varphi \in W^{1,2}_{loc}(\mathbb{R}^2)$ and
 \begin{equation} \label{grad 1/phi}
  \nabla \frac{1}{\varphi}=  -\mathbbm{1}_{\Omega}  \frac{\nabla \varphi}{\varphi^2} \quad \text{and} \quad \nabla \varphi = \mathbbm{1}_\Omega \nabla \varphi
  \end{equation}
in $\mathbb{R}^2$ in the  sense of distributions:
$$ \int_{\mathbb{R}^2} \frac{1}{\varphi} \nabla \xi= \int_{\Omega}\frac{\nabla \varphi}{\varphi^2} \xi \quad \text{and} \quad \int_{\mathbb{R}^2} \varphi \nabla \xi= -\int_{\Omega}\nabla \varphi \xi$$
for any $\xi
\in C^\infty_0(\mathbb{R}^2) $. The formal check is performed in Fact \ref{fact6} in the Appendix.

Now, we would like to verify that $ \frac{u}{\varphi} \in W^{1,2}_{loc}(B(0,R))$ and
 $$ \nabla \frac{u}{\varphi} = \frac{\varphi\nabla u}{\varphi^2} - \frac{u \nabla \varphi }{\varphi^2} \mathbbm{1}_\Omega.$$

  \begin{fact} \label{product}
  Let $u,v \in W^{1,2}_{loc}(B(0,R))\cap L^{{}^{\scriptsize \infty}}_{loc}(B(0,R))$. Then $uv \in W^{1,2}_{loc}(
B(0,R))  $ and $\nabla(uv) = u \nabla  v + v \nabla u$.
  \end{fact}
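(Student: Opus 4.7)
The plan is to establish the product rule distributionally via mollification, and then read off $W^{1,2}_{loc}$ membership automatically. Fix a ball $B(x_0,r) \Subset B(0,R)$ and a test function $\xi \in C_c^\infty(B(x_0,r))$. For a standard mollifier $\eta_\eps$, set $u_\eps := u * \eta_\eps$ and $v_\eps := v * \eta_\eps$. For $\eps$ small these are smooth in a neighborhood of $\overline{B(x_0,r)}$, satisfy the uniform bounds $\|u_\eps\|_{L^\infty(B(x_0,r))} \leq \|u\|_{L^\infty(B(x_0,r+\eps))}$ (and similarly for $v$), and converge to $u,v$ in $L^2(B(x_0,r))$, while $\nabla u_\eps \to \nabla u$ and $\nabla v_\eps \to \nabla v$ in $L^2(B(x_0,r))$. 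For smooth functions the classical product rule combined with integration by parts gives
\begin{equation*}
\int u_\eps v_\eps \, \nabla \xi \, dx \;=\; -\int (u_\eps \nabla v_\eps + v_\eps \nabla u_\eps)\, \xi \, dx.
\end{equation*}

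The next step is to pass to the limit $\eps \to 0$ on both sides. On the left, the uniform $L^\infty$ bound plus a.e.\ convergence (along a subsequence) gives $u_\eps v_\eps \to uv$ in $L^2(B(x_0,r))$ by dominated convergence, so $\int u_\eps v_\eps \nabla \xi \to \int uv \nabla \xi$. On the right, take a representative term $\int u_\eps \nabla v_\eps \, \xi$ and split
\begin{equation*}
u_\eps \nabla v_\eps - u \nabla v \;=\; (u_\eps - u)\nabla v \;+\; u_\eps (\nabla v_\eps - \nabla v).
\end{equation*}
By Cauchy--Schwarz, the integral of $(u_\eps - u)\,\nabla v \cdot \xi$ is controlled by $\|u_\eps - u\|_{L^2(\mathrm{supp}\,\xi)} \|\nabla v\|_{L^2(\mathrm{supp}\,\xi)} \|\xi\|_\infty$, which tends to $0$; the second piece is bounded by $\|u_\eps\|_{L^\infty} \|\nabla v_\eps - \nabla v\|_{L^2} \|\xi\|_{L^2}$, which tends to $0$ by the uniform $L^\infty$ bound on $u_\eps$ and the $L^2$ convergence of $\nabla v_\eps$. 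The term $\int v_\eps \nabla u_\eps \, \xi$ is treated symmetrically in the roles of $u$ and $v$.

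This yields the distributional identity $\int uv \nabla \xi = -\int (u\nabla v + v\nabla u)\xi$ for every $\xi \in C_c^\infty(B(0,R))$, i.e., $\nabla(uv) = u\nabla v + v \nabla u$ in the sense of distributions. Since $uv \in L^\infty_{loc} \subset L^2_{loc}$ and the right-hand side is a product of an $L^\infty_{loc}$ function with an $L^2_{loc}$ one and thus lies in $L^2_{loc}$, we conclude $uv \in W^{1,2}_{loc}(B(0,R))$. There is no substantive obstacle here; the only subtle point is that one must not try to estimate ``$L^2 \cdot L^2$'' products via a Sobolev embedding, but rather exploit the $L^\infty_{loc}$ assumption to handle the mixed-type products $u_\eps (\nabla v_\eps - \nabla v)$ directly via Cauchy--Schwarz as above.
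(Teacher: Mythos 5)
Your proof is correct and follows essentially the same route as the paper: mollify $u$ and $v$, use the uniform $L^\infty$ bound from Young's inequality together with $L^2$ convergence of the functions and their gradients, and pass to the limit in the smooth product rule. Your algebraic split $u_\eps\nabla v_\eps - u\nabla v = (u_\eps-u)\nabla v + u_\eps(\nabla v_\eps - \nabla v)$ is in fact a touch cleaner than the paper's, since it establishes $u_\eps\nabla v_\eps \to u\nabla v$ in $L^1_{loc}$ directly by Cauchy--Schwarz and dispenses with the extra dominated-convergence step the paper uses for the cross terms $\int(\nabla u\, v_\eps + u_\eps\nabla v)\eta$.
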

\noindent Fact \ref{product} is proved in the Appendix.


Recall that $\varphi$ is extended by $1$ outside $\Omega$, $\frac{1}{\varphi} \in W^{1,2}_{loc}(\mathbb{R}^2)$
and $u$ is $C^1$-smooth in $B(0,R)$ by Fact \ref{fact5}.
By Fact \ref{product} we know that $ \frac{u}{\varphi} \in W^{1,2}_{loc}(B(0,R))$ and, as expected,
$$ \nabla \frac{u}{\varphi} = \frac{\varphi \nabla u}{\varphi^2} - \frac{u \nabla \varphi}{\varphi^2} \mathbbm{1}_\Omega$$
in $B(0,R)$ in the sense of distributions. To establish the divergence-type equation for $\nabla \frac{u}{\varphi}$ 
we want to show that for every  test function $h \in C_0^\infty(B(0,R)\setminus F_1)$, we have
$$ \int_{B(0,R)\setminus F_1} \varphi^2 \nabla (\frac{u}{\varphi}) \nabla h =0.$$
 So we need to prove that
 \begin{equation} \label{need}
 \int_{B(0,R)\setminus F_1} (\varphi \nabla u - u \nabla \varphi \mathbbm{1}_\Omega) \cdot  \nabla h =0.
 \end{equation}
 Since $u$ is a solution to $\nabla u +Vu =0$ in $B(0,R)$,
 we have 
  \begin{equation} \label{have1}
 \int_{B(0,R)\setminus F_1} \nabla u \cdot (\varphi \nabla h + h \nabla \varphi) = \int_{B(0,R)\setminus F_1} V\varphi u h
 \end{equation}
 (we know the last equality under the assumption that $
 \varphi$ is smooth, but it is also true for  $\varphi \in W^{1,2}_{loc}(\mathbb{R}^2)$  by taking the norm limit).
  Consider a function $\xi \in C_0^\infty(B(0,R) \setminus F_0)$ that descends from $1$ to $0$ in 
 the $\varepsilon$ -- neighborhood of $F_0\cup \partial B(0,R)$ with $|\nabla \xi| < C/\varepsilon$.
  
  Since $\Delta \varphi + V \varphi =0$ in $$\Omega= B(0,R) \setminus (F_0 \cup F_1)$$ and $uh\xi \in C_0^1(\Omega)$, we have
  
  \begin{equation} \label{have2}
   \int_\Omega \nabla \varphi \cdot(h \nabla u \xi +u\nabla h \xi + uh \nabla \xi)=  \int_\Omega V\varphi uh\xi.
  \end{equation}
 Note that $\int_\Omega V\varphi uh\xi$ tends to $\int_\Omega V\varphi u h$ as $\varepsilon \to 0$ (the functions $V,\varphi,uh,\xi$ are uniformly bounded and 
the convergence  holds pointwise in $\Omega$ because  $\xi \to 1 $ in $B(0,R)\setminus F_0$). 
   Note that
  $$h \nabla u \xi \to h\nabla u \quad \text{pointwise in } \Omega$$
   and 
   $$u\nabla h \xi \to  u\nabla h \quad \text{pointwise in } \Omega $$ 
  because $\xi \to 1$ in $\Omega$.
   Hence
  \begin{equation} \label{have3} \int_\Omega \nabla \varphi \cdot(h \nabla u \xi +u\nabla h \xi) \to \int_\Omega \nabla \varphi (h\nabla u + u\nabla h) \quad \text {as } \varepsilon \to 0
 \end{equation}
 by the Lebesgue dominated convergence theorem with the majorant $|\nabla \varphi| (|h||\nabla u| + |u||\nabla h|)$.

 In order to prove  \eqref{need}  we will show
 that 
 $$ \int_{\Omega} \nabla \varphi \cdot (uh\nabla \xi) \to 0.$$
 And here is the main place where we use that $F_0$ is the zero set of $u$!
 Note that $uh \in C^1_0(B(0,R))$ and vanishes on $F_0$, so $|uh| \leq C_1(u,h)\varepsilon$ in the $\varepsilon$-- neighborhood  of the zero set of $u$.  Thus $|uh\nabla \xi|$ is  bounded by some constant $C(u,h)$ in $B(0,R)$. Also $m_2(\textup{supp} \nabla \xi)$ goes to 0. Hence

$$ \int_\Omega \nabla \varphi \cdot  (hu\nabla \xi) \leq C(u,h) \sqrt{m_2(\textup{supp} \nabla \xi)} \sqrt{\int_\Omega |\nabla \varphi|^2} \to 0.$$
 By \eqref{have2},\eqref{have3} we obtain
 $$\int_{\Omega} \nabla \varphi \cdot(h \nabla u  +u\nabla h) =  \int_{\Omega} V\varphi uh=\int_{B(0,R)\setminus F_1} V\varphi uh$$
 (the second equality is due to the fact that $u=0$ on $F_0$).
  Using $\nabla \varphi = \nabla \varphi \mathbbm{1}_\Omega$ in the sense of distributions, we have
\begin{equation} \label{have4} 
 \int_\Omega \nabla \varphi (h\nabla u + u\nabla h) = \int_{B(0,R)\setminus F_1} \nabla \varphi (h\nabla u + u\nabla h).
  \end{equation}
  Thus 
 $$\int_{B(0,R)\setminus F_1} \nabla \varphi \cdot(h \nabla u  +u\nabla h) =  \int_{B(0,R)\setminus F_1} V\varphi uh$$
 and, subtracting  \eqref{have1}, we finish the proof of \eqref{need}.
\end{proof}

\subsection{Quasiconformal change of variables.}

We briefly describe some facts from the theory of quasiconformal mappings, which are used in the study of the solutions to equations in divergence form on the plane, and explain why the solutions behave like ordinary harmonic functions.  We partially follow the exposition from \cite{NPS}, where the quasiconformal mappings are applied to quasi-symmetry of Laplace eigenfunctions.

Let $B$ be a disk on the plane. Consider a real-valued function  $f \in W^{1,2}_{loc}(B)$ satisfying
\begin{equation} \label{eq:div}
 \textup{div}(\varphi^2 \nabla f) = 0
 \end{equation}
 and assume that $ 0< c < \varphi(x) < C < +\infty$ in $B$.
One can find a function $\tilde f \in W^{1,2}_{loc}(B)$ such that
$$ \varphi^2 f_x = \tilde f_y  \textup{ and } \varphi^2 f_y = -\tilde f_x$$ 
(see Section \ref{sec: divergence}) and $f$ appears to be the real part of  $w=f+i\tilde f$. A direct computation shows that $w$ is a solution to the Beltrami equation:
\begin{equation} \label{eq:Beltrami}
  \frac{\partial w}{\partial \overline z} = \mu \frac{\partial w}{\partial  z}
\end{equation}  
with the Beltrami coefficient 
\begin{equation} \label{eq:mu}
\mu= \frac{1-\varphi^2}{1+\varphi^2}\cdot \frac{f_x+i f_y}{f_x-if_y}.
\end{equation}
When $\nabla f=0$, we put $\mu=0$.

We are going  to apply the theory of quasiconformal mappings in a situation when $f=\frac{u}{\varphi} $ and the domain $$\Omega_1:= B(0,R)\setminus F_1$$ is not simply connected. In this case $w$ and $\tilde f$ can be defined only locally, but not in the whole $\Omega_1$. However the Beltrami coefficient $\mu$  is  well defined by \eqref{eq:mu} in  $\Omega_1$ and 
$$|\mu| \leq \frac{1-\varphi^2}{1+\varphi^2} \leq C\varepsilon^2.$$
Let us extend $\mu$ by zero outside $\Omega_1$ to the whole complex plane. Now $\mu$ has a compact support.

The existence Theorem 5.3.2 \cite{AIM09} claims that there is a $K$-quasiconformal homeomorphism $\psi$ of the complex plane such that 
\begin{itemize}
\item $\psi \in W^{1,2}_{loc}$,
\item
 $ \frac{\partial \psi}{\partial \overline z} = \mu \frac{\partial \psi}{\partial  z}$,
\item  
 $ K \leq \frac{1+\sup |\mu|}{1 - \sup |\mu|}.$
 
\end{itemize}
In our case $$K \leq 1+ C' \varepsilon^2.$$
\textbf{Claim.} The function $f \circ \psi^{-1}$ is harmonic in $\psi(\Omega_1)$.

Indeed, for any ball $B \subset \Omega_1$, we can define $w\in W^{1,2}_{loc}(B)$ such that $f=\Re w$ and $w$,$\psi$ solve the same Beltrami equation.  Stoilow factorization theorem  (\cite{AIM09}, p.179, Theorem 5.5.1)
claims that 
there is a holomorphic function $W$ such that
$$ w = W(\psi(z))$$
and therefore the harmonic function $\Re W$ satisfies
$$ f(z)= \Re W(\psi(z)).$$
Clearly, the local observation shows that $f\circ \psi^{-1}$ is a harmonic function in $\psi(\Omega_1)$.

Note that $\psi(B(0,R))$ is a simply connected domain (and not the whole plane).
Using the Riemann uniformisation theorem we can find a conformal map that sends $\psi(B(0,R))$ back to $B(0,R)$ and $\psi(0)$ to $0$. The composition of this conformal map and the  $K$-quasiconformal homeomorphism $\psi$ will be a  $K$-quasiconformal homeomorphism 
$g$ of $B(0,R)$ onto itself with $g(0)=0$.
Then the function $h=f \circ g^{-1}$ is harmonic in $g(\Omega_1)$.

\textbf{Distortion of quasiconformal mappings.}
Mori's theorem (\cite{Ahl66}, Chapter III, Section C)
tells us that distances are changed by $g$ in a controlled way:
\begin{equation} \label{eq:distortion}
 \frac{1}{16}\left|\frac{z_1 - z_2}{R}\right|^{K} \leq \frac{|g(z_1) - g(z_2)|}{R} \leq 16\left|\frac{z_1 - z_2}{R}\right|^{1/K}.
\end{equation}

We choose $$ \varepsilon = \frac{c}{\sqrt{\log R}} $$
so that $$K\in [1,1+C c^2/ \log R),  \quad R^K\asymp R \asymp R^{1/K}$$ and the distortion on scales from $
\frac{1}{R}$ to $R$ is bounded. Namely, we may choose $c$ so small that if $\frac{1}{R}\leq|z_1-z_2| \leq2R$, then
$$\frac{1}{32}|z_1-z_2| \leq|g(z_1)-g(z_2)| \leq 32|z_1-z_2|.$$

Note that in the statement of Theorem \ref{local main} one can safely assume that $R$ is sufficiently large ($R\gg 1$) by rescaling, which makes $\|V\|_\infty$ only smaller. It is needed to make $\varepsilon \geq 1/R$.  Then we get a harmonic function $h$ in $B(0,R)\setminus g(F_1)$, where $g(F_1)$ is the union of sets of diameter $\sim \varepsilon$. The image of a single disk  of radius $\varepsilon$ will be contained in a disk of radius $32\varepsilon$. Let us denote these disks of radius $32\varepsilon$ by $D_j$. The images of disks from $F_1$ are $ \frac{C}{32}\varepsilon$ -- separated from each other and from the zero set of $h$.  Hence $D_j$ are  $ (\frac{C}{32}\varepsilon - 128\varepsilon)=C_132\varepsilon$ -- separated from each other and from the zero set of $h$, and $h$ does not change sign in $C_1D_j \setminus D_j$. We have $$C_1 = \frac{C}{32^2} - 4 >  100$$
if $C= 10^6$.

We specifically asked that $0$ and $z_{\max}$ (the point where $\sup_{B(0,R/2)}|u|$ is attained) are $C\varepsilon$ -- separated from the disks. Recall that $g(0)=0$, so the disks $C_1D_j$ do not contain $0$ and $g(z_{\max})$. The distortion estimate implies that
$g(z_{\max}) \in \overline{B(0,R-R/64)}$. Since  we had 
$$\frac{\sup_{B(0,R)}|u|}{\sup_{B(0,R/2)}|u|} \leq e^N,$$
we conclude that
$$ \frac{\sup_{B(0,R) \setminus \cup 3D_j}|h|}{\sup_{B(0,R-R/64)\setminus \cup 3D_j}|h|} \leq e^N.$$

If we  make the rescaling by a factor of $32 \varepsilon$, then the disks $D_j$ become 100-separated unit disks and $R$ becomes $$R'=R\cdot32\varepsilon\sim R\sqrt{\log R}.$$

The goal of Theorem \ref{local main} is to estimate $\sup_{B(0,r)}|u|$ from below.
If $r<1/R$, the image of $B(0,r)$ may have radius significantly smaller than $r$.
However $g(B(0,r))$ contains  a disk with center at $0$ of radius $$\frac{R}{16}\left(\frac{r}{R}\right)^{K} \geq \frac{R}{16} \left(\frac{r}{R}\right)^2.$$
Let $\tilde g = \frac{1}{32\varepsilon} g$. Then $\tilde g(B(0,r))$ contains a ball $B(0,r')$, where $$r' \geq
 \frac{R'}{16} \left(\frac{r}{R}\right)^2.$$
So $$\frac{R'}{r'} \leq 16  \frac{R^2}{r^2}.$$
In order to prove estimate \eqref{eq: Br}, it is enough to show that 
$$ \sup_{B(0,r')\setminus \cup 3D_j}|h| \geq c (r'/R')^{C (R' +N)}\sup_{B(0,R')\setminus \cup 3D_j}|h|. $$
It will be proved in Theorem \ref{local 3}.

\section{Act III}  \label{sec: toy}
Before we formulate and prove the promised local Theorem \ref{local 3} we will explain the main idea in the global case.
\begin{theorem}[Toy problem] \label{thm: toy}

 Let $\{D_j\}$ be a collection of $100$-separated disks with unit radius on the complex plane
 $\mathbb{C}$. Suppose that $u$ is a harmonic function in $\mathbb{C} \setminus \cup_j D_j$ which
 preserves sign in each annulus $5D_j\setminus D_j$. If $|u(z)| \leq e^{-L|z|}$ for all $z \in \mathbb{C} \setminus \cup_j D_j$ and $L$ is sufficiently large, then $u \equiv 0$.
 
 \end{theorem}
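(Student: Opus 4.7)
My plan is to derive the conclusion via a global Carleman estimate for the Laplacian on the plane, using the sign-preservation on each collar $5D_j\setminus D_j$ to turn the cutoff errors around the holes into summable bookkeeping terms.

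First I would cut $u$ off smoothly near each hole and near infinity. Pick a smooth $\chi:\mathbb{C}\to[0,1]$ that vanishes on $\bigcup_j D_j$ and equals $1$ off $\bigcup_j 2D_j$, with $|\nabla\chi|+|\Delta\chi|\leq C$ uniformly (this is possible because the disks are $100$-separated, via a product of localized bumps). Combine it with a radial outer cutoff $\chi_R$ supported in $\{|z|\leq 2R\}$ and equal to $1$ on $\{|z|\leq R\}$, and put $w=\chi\chi_R u$. Since $\Delta u=0$ where $u$ is defined,
\[
\Delta w \;=\; 2\nabla(\chi\chi_R)\cdot\nabla u \;+\; u\,\Delta(\chi\chi_R),
\]
which is supported on the collars $2D_j\setminus D_j$ and on the outer shell $\{R\leq|z|\leq 2R\}$. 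The sign-preservation enters here: because $u$ does not change sign on the enveloping annulus $5D_j\setminus D_j$, the standard interior gradient estimate for harmonic functions combined with Harnack's inequality gives $|\nabla u|\leq C|u|$ pointwise on every collar, so $|\Delta w|\leq C|u|$ pointwise on its support (no uncontrolled $|\nabla u|$ ever needs to be estimated).

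Next, I would invoke a Carleman inequality for $\Delta$ in $\mathbb{R}^2$ with a radial log-linear weight $\phi(r)=r-\alpha\log r$, which is the weight the paper alludes to in connection with the higher-dimensional appendix argument. Schematically,
\[
\tau\!\int e^{2\tau\phi(|z|)}|w|^{2}\,dA \;\leq\; C\!\int e^{2\tau\phi(|z|)}|\Delta w|^{2}\,dA.
\]
Substituting the pointwise bound on $|\Delta w|$ and the decay $|u(z)|\leq e^{-L|z|}$, the collar at $z_j$ (with $\rho_j=|z_j|$) contributes at most $Ce^{2\tau\phi(\rho_j)-2L\rho_j}$, and for $\tau$ chosen proportional to $L$ but strictly below it (say $\tau=L/2$) the sum $\sum_j e^{(2\tau-2L)\rho_j}$ converges despite the $O(\rho^{2})$ density of disks, and is small (of order $L^{-2}$) for large $L$. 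The outer-shell contribution vanishes as $R\to\infty$. Balancing this upper bound against a lower bound on the left-hand side provided by any fixed point where $u$ does not vanish, and optimizing in the ratio of $\tau$ to $L$, one expects the Carleman estimate to become inconsistent once $L$ is large enough, which forces $u\equiv 0$.

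The principal obstacle is precisely to make this last balancing act close: the log correction in $\phi(r)=r-\alpha\log r$ is calibrated for exactly this situation, since $\phi'(r)\approx 1$ matches the linear rate of the decay while the logarithmic term supplies the polynomial flexibility needed for the Carleman estimate to hold on an unbounded domain. A secondary, combinatorial issue is that the $O(\rho^{2})$ density of disks in $B(0,\rho)$ must be defeated by the exponential gap $e^{(2\tau-2L)\rho_j}$, and this is the step that forces the hypothesis that $L$ be sufficiently large.
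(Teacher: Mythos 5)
Your Carleman route is essentially the strategy of the paper's Appendix, Section 6.1, and the paper itself advertises that approach as giving the \emph{weaker} decay rate $e^{-L|z|\log|z|}$: the extra logarithm is a genuine loss, not a cosmetic one. The source of the loss is the Carleman inequality itself. For a weight of (essentially) linear growth the gain is not a clean power of $\tau$ uniformly over $\mathbb{R}^{2}$; the inequality the paper actually proves and uses is
\begin{equation*}
\int_{B_R}|\Delta u|^{2}e^{kx_1}\;\geq\;\frac{ck^{2}}{R^{2}}\int_{B_R}u^{2}e^{kx_1},
\end{equation*}
and it is precisely the $1/R^{2}$ degeneration that forces the Carleman parameter to be of order $\log R$ when the collar errors are absorbed, yielding $\sup e^{kx_1}\leq e^{CR\log R}$. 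Your radial weight $\phi(r)=r-\alpha\log r$ has $\phi''(r)=\alpha/r^{2}$, which also degenerates at infinity, so you should expect the same $R$-dependent gain, not the uniform-$\tau$ estimate you write down.

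Independently, the concluding ``balancing act'' is not an argument. You absorb the collar cutoff errors directly via the decay $|u(z)|\leq e^{-L|z|}$ and then aim to contradict a lower bound at a point $z_0\notin\bigcup_j D_j$ with $u(z_0)=c_0\neq 0$. But the same decay hypothesis forces $c_0\leq e^{-L|z_0|}$, so your left-hand-side lower bound $\tau c_0^{2}e^{2\tau\phi(|z_0|)}\leq\tau\,e^{2\tau\phi(|z_0|)-2L|z_0|}$ scales \emph{exactly} like the collar contributions $e^{2\tau\phi(\rho_j)-2L\rho_j}$ on the right; no choice of $\tau/L$ makes the two sides incompatible. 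The paper's Appendix proof avoids this trap by not using the decay to kill the collar terms at all: it bounds each collar integral by $e^{-k/2}$ times the main Carleman integral over the adjacent annulus $4D_j\setminus 3D_j$ (where the weight is pointwise larger by a factor $\geq e^{k/2}$), absorbs that into the left side provided $k^{2}/R^{2}\gg e^{-k/2}$, and thus derives a three-ball inequality; the decay hypothesis enters only afterward.

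For reference, the paper's actual proof of Theorem~\ref{thm: toy} is a short complex-analytic argument that obtains the sharp linear rate without any Carleman estimate. Set $m_j=\min_{\partial 3D_j}|u|$ and let $z_j$ be the rightmost point of $3D_j$; Harnack and Cauchy give $\max_{\partial 3D_j}|u|$, $\max_{\partial 3D_j}|\nabla u|\leq Am_j$. Choose $j_0$ maximizing $m_je^{k\Re z_j}$ for $k\in(0,L)$, apply the maximum principle to the analytic function $f=(u_x-iu_y)e^{kz}$ on $\mathbb{C}\setminus\bigcup 3D_j$ (which tends to zero at infinity) to get $|\nabla u|\leq Am_{j_0}e^{-k\Re(z-z_{j_0})}$, and follow the horizontal ray to the right from $z_{j_0}$: either it escapes to infinity, contradicting the decay since $|u|\geq(1-A/k)m_{j_0}$ stays bounded below along it, or it hits a disk $3D_j$ with $\Re z_j\geq\Re z_{j_0}+1$, which strictly increases $m_je^{k\Re z_j}$ and contradicts the choice of $j_0$. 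If you want a Carleman-based proof you should expect to pay the extra $\log$, exactly as the Appendix does.
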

 \begin{proof} We start with a simple observation.
 
 \begin{claim}
 Let $m_j= \min\limits_{\partial 3D_j}|u|$. Then for some absolute
 constant $A>0$, we have
 \begin{enumerate}
 \item  $\max\limits_{\partial 3D_j}|u| \leq Am_j$,
 \item  $\max\limits_{\partial 3D_j}|\nabla u| \leq Am_j$.
 \end{enumerate}
 \end{claim}
 \begin{proof}
 By the Harnack inequality there exists a constant $A>0$ such that
 $$ \sup\limits_{4D_j\setminus 2 D_j}|u| \leq A\inf\limits_{4D_j\setminus 2 D_j}|u| \leq Am_j,$$
 which proves the first part of the claim. The second part follows from the Cauchy inequality.
 
 \end{proof}
  \newpage
 Let $k\in (0,L)$  and consider the numbers $m_j e^{k \Re z_j}$, where $z_j$ is the rightmost point of $3D_j$.
 
  \includegraphics[width=0.5\textwidth]{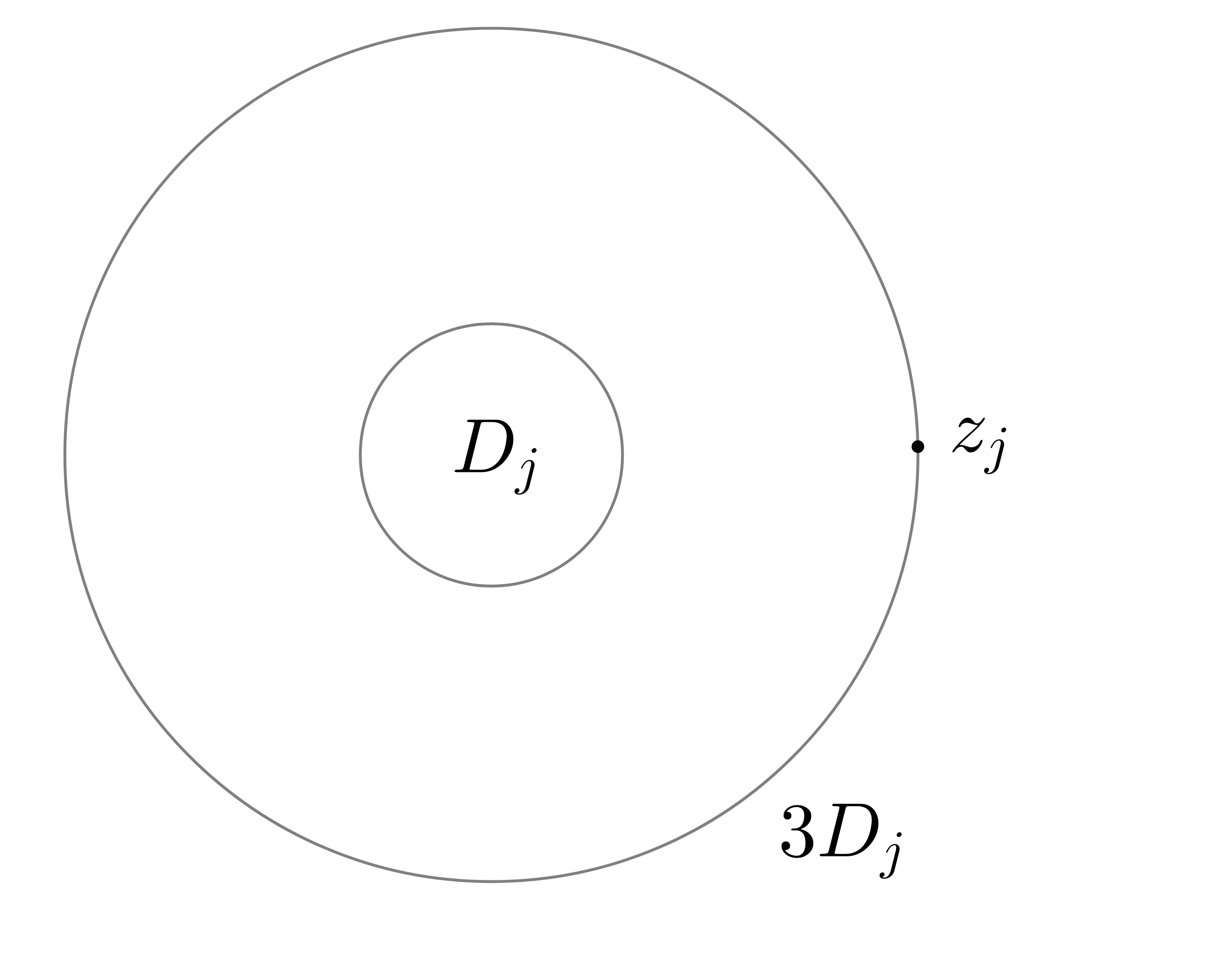}
  
 \noindent Since $$m_j \leq |u(z_j)| \leq e^{-L|z_j|},$$
  there is $j_0$ such that 
  $$ m_{j_0} e^{k \Re z_{j_0}}= \max\limits_j m_je^{k \Re z_j}.$$
  
  Now, consider  the analytic in $\mathbb{C} \setminus \cup (3D_j)$ function $f=(u_x - i u_y) e^{kz}$. 
  If $|u(z)| \leq e^{-L|z|}$ in $\mathbb{C} \setminus \cup (D_j)$, then 
  $$|\nabla u(z)| \leq C \sup_{B(z,1)}| u| \leq Ce^{-L(|z|-1)} \quad \text{ for  } z \in \mathbb{C} \setminus \cup (2D_j)$$
   and 
   $f(z) \to 0$ as $z \to \infty$, $z \in \mathbb{C} \setminus \cup (2D_j)$.
   So, by the maximum principle, there exists $j_1$ such that 
   $$ \max\limits_{\mathbb{C} \setminus \cup (3D_j)}|f|= \max\limits_{\partial 3D_{j_1}} |f| \leq Am_{j_1} e^{k\Re z_{j_1}} \leq Am_{j_0} e^{k\Re z_{j_0}},$$ 
   whence $|\nabla u| \leq Am_{j_0} e^{-k\Re(z-z_{j_0})}$ in $\mathbb{C} \setminus \cup (3D_j)$. We may assume that
   $m_{j_0}\neq 0$, otherwise $u$ is constant and therefore zero.
   
   Now, consider the ray $\{z_{j_0} + y: y \in (0,+\infty) \}$.     
   There are two possibilities:
   \begin{itemize}
   \item[(i)] The ray goes to $\infty$ without hitting any other disks $(3D_j)$.
   Then for any $y>0$, 
   $$|u(z_{j_0}+y) - u(z_{j_0})| \leq \int_{0}^{\infty}|\nabla u(z_{j_0}+t)| dt \leq \int_{0}^{\infty} Am_{j_0} e^{-kt}dt = \frac A k m_{j_0}.   $$  
   Since $|u(z_{j_0})| \geq m_{j_0}$, wee see that $|u|$ stays bounded from below by 
   $(1- \frac A k)m_{j_0}$ on the ray. If $k>A$, this contradicts the decay assumption.
   \newpage
   \item[(ii)] The ray hits another disk $3D_j$
   
  \includegraphics[width=0.9\textwidth]{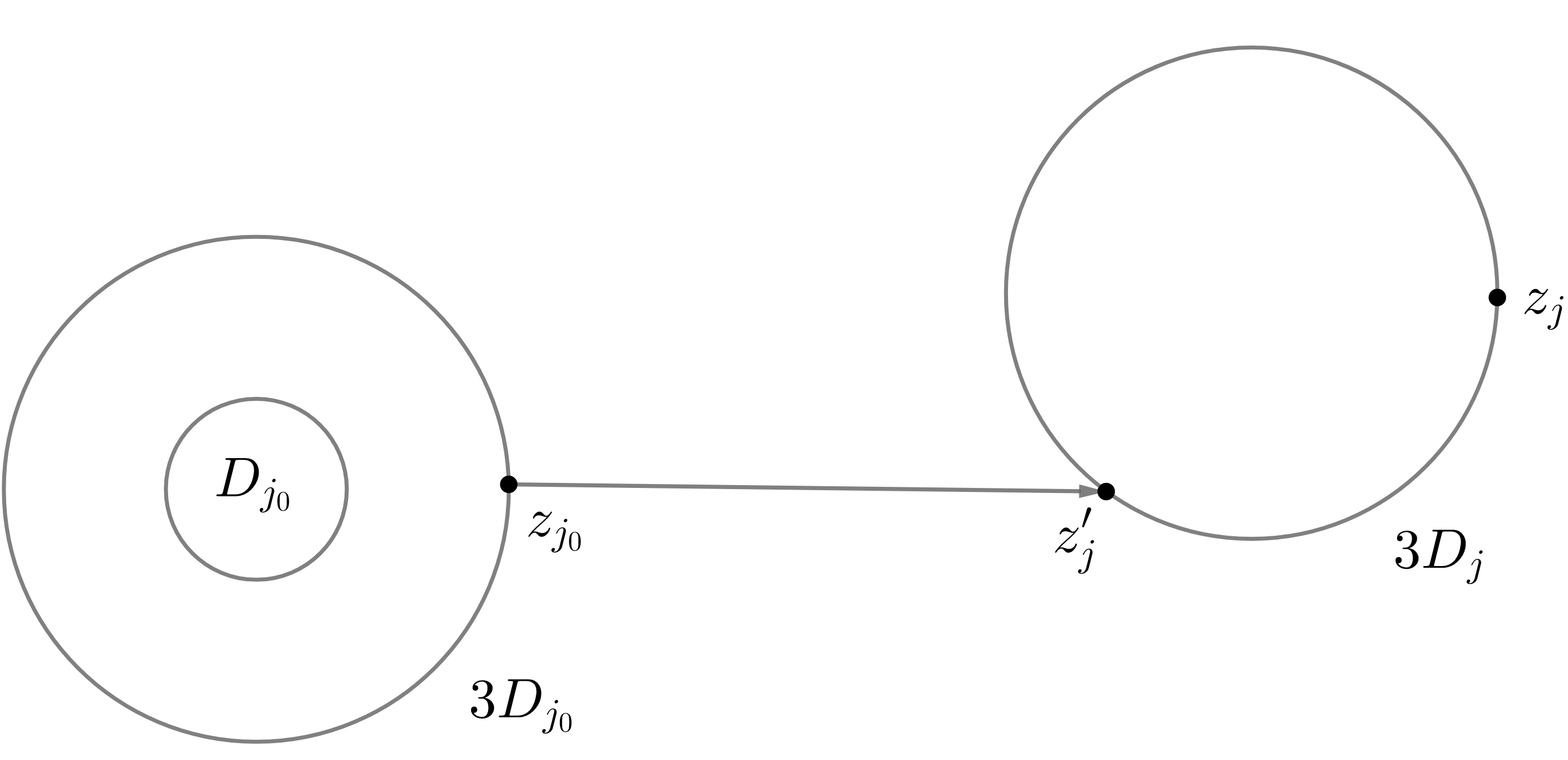} 

  \noindent at some point $z_{j}'= z_{j_0}+y$.
  Then we still have $|u(z_j')|\geq (1-\frac A k )m_{j_0}$
   and, due to the fact that the disks are separated,
   $$ \Re (z_{j}' - z_{j_0} )=|z_{j}' - z_{j_0}| \geq 1.$$  
   Hence $$m_j e^{k\Re z_j } \geq m_j e^{k \Re z_j'} \geq \frac{|u(z_j')|}{A} e^{k(\Re z_{j_0} +1)} \geq$$
   $$\geq \frac{1}{A}(1 - \frac{A}{k})e^k m_{j_0} e^{\Re z_{j_0}} > m_{j_0} e^{\Re z_{j_0}} $$
   as soon as $k>2A$, which contradicts the choice of $j_0$.
   
   \end{itemize}
   \noindent This proves the theorem with any $L>2A$.

 \end{proof}
 
 Now we formulate and prove the harmonic counterpart of the main local theorem.
 
 \begin{theorem} \label{local 3}
 Let $D_j$ be a collection of 100 -- separated unit disks on $\mathbb{R}^2=\mathbb{C}$ such that 
 $0 \notin \cup 3D_j$. Let $R> 10^4$, $0<r\leq R/4$. 
 Consider any harmonic function $u$ in $B(0,R) \setminus \cup D_j$ such that $u$ does not change sign in
 $(5D_j\setminus D_j) \cap B(0,R)$ for every $j$. Assume  that 
 $$\sup \limits _{B(0,R-R/64) \setminus 
 \cup 3D_j}|u| \geq e^{- N} \sup \limits_{B(0,R) \setminus 
 \cup 3D_j}|u|.$$
 Then 
 \begin{equation}\label{eq:*}
  \sup \limits _{B(0,r) \setminus 
 \cup 3D_j}|u| \geq  \left(\frac r R \right)^{ C (R + N) } \sup \limits_{B(0,R) \setminus 
 \cup 3D_j}|u|,
 \end{equation}
with some absolute constant $C>0$.
 \end{theorem}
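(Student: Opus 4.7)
My plan is to adapt the extremal-disk plus walking argument of the toy Theorem~\ref{thm: toy} to the bounded, polynomial setting, replacing the exponential weight $e^{kz}$ with the power weight $z^n$ (matching the polynomial rate $(r/R)^{C(R+N)}$) and the ray to infinity with a radial walk inside the annulus $\{r<|z|<R-R/64\}$. At one end of the annulus we will feed in the hypothesis $\sup\geq e^{-N}$; at the other we will exploit the smallness of $u$ near the origin (which is an interior point since $0\notin\cup 3D_j$).

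After normalizing $\sup_{B(0,R)\setminus\cup 3D_j}|u|=1$, set $m_j=\min_{\partial 3D_j}|u|$ and $\epsilon=\sup_{B(0,r)\setminus\cup 3D_j}|u|$. As in the toy proof, Harnack's inequality on the sign-constant annulus $(5D_j\setminus D_j)\cap B(0,R)$ together with interior gradient estimates yields $\max_{\partial 3D_j}(|u|+|\nabla u|)\leq A m_j$. Fix $n=\lceil C_0(R+N)\rceil$ with $C_0$ a large absolute constant, and let $j_0$ be a disk maximizing $\mu_j:=m_j|z_j|^{-n}$. The analytic object replacing $(u_x-iu_y)e^{kz}$ is $v(z)/z^n$, where $v:=u_x-iu_y$; viewed in $B(0,R)\setminus\cup 3D_j$ it has a pole of order at most $n$ at the origin. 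Subtracting the degree $n-1$ Taylor polynomial of $v$ at $0$ (whose coefficients are bounded by $C\epsilon/r^{k+1}$ via Cauchy estimates, since $v$ is holomorphic in a neighborhood of $0$ of radius $\sim r$) yields a holomorphic function $\tilde F$ on $B(0,R)\setminus\cup 3D_j$. The maximum modulus principle applied on $B(0,R-R/64)\setminus\cup 3D_j$ then gives $|\tilde F|\leq A\mu_{j_0}+C\epsilon/r^{n+1}+C/R^n$, with the three terms coming respectively from the disk boundaries, the Taylor correction, and interior gradient estimates on $\partial B(0,R-R/64)$ (using $\sup_{B(0,R)}|u|\leq 1$).

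Translating back to a pointwise estimate on $|\nabla u|$ outside the disks, I will run the walking step starting from $\partial 3D_{j_0}$. Either the outward radial ray reaches $\partial B(0,R-R/64)$ without encountering another disk, in which case integrating the gradient bound and invoking the hypothesis $\sup_{B(0,R-R/64)\setminus\cup 3D_j}|u|\geq e^{-N}$ forces $m_{j_0}\gtrsim e^{-N}(|z_{j_0}|/R)^n$, or the ray hits some disk $D_{j_1}$ with $|z_{j_1}|\geq|z_{j_0}|+1$, in which case a computation strictly analogous to case (ii) in the proof of Theorem~\ref{thm: toy} gives $\mu_{j_1}\geq(1+c)\mu_{j_0}$ once $n\geq 2A$, contradicting the extremality of $j_0$. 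From the surviving lower bound on $m_{j_0}$, a symmetric inward walk toward the origin — combined with the Taylor-correction term in the bound for $\tilde F$ — gives $\epsilon\gtrsim(r/R)^n e^{-N}$, which is the claimed $(r/R)^{C(R+N)}$ after absorbing constants.

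The main obstacle is the delicate bookkeeping of the three competing terms in the maximum principle estimate — the extremal-disk term $A\mu_{j_0}$, the Taylor correction $C\epsilon/r^{n+1}$, and the boundary contribution $C/R^n$ — together with the choice of $n\sim R+N$ that must simultaneously be large enough so the walking step rules out the second alternative, small enough that the Taylor term does not inflate the weighted maximum (otherwise the extremal-replacement loop degenerates), and calibrated so the final synthesis produces the exponent $C(R+N)$ rather than something worse. A secondary technical issue is that the outward radial ray could meet several disks in succession; this is handled by perturbing the ray direction using the $100$-separation of the disks, or by iterating the extremal-replacement step a bounded number of times before the bookkeeping closes.
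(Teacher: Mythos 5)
Your overall strategy — replace $e^{kz}$ by $z^{-n}$, find the extremal disk, and walk — is the same as the paper's, but the proposal has a directional error in the walk that is fatal as written, and the Taylor-subtraction step is both unnecessary and misleading about where the real difficulty lies.

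The central problem is the direction of the walk. With the weight $z^{-n}$ the maximum-principle bound you obtain is $|\nabla u(x)|\le A\mu_{j_0}|x|^n$, which \emph{grows} as $|x|$ increases. Consequently an \emph{outward} radial walk from $z_{j_0}$ controls nothing: the integral $\int_1^{t} Am_{j_0}\sigma^n|z_{j_0}|\,d\sigma$ is enormous as soon as $(t-1)n\gtrsim 1$, and $100$-separation forces $(t-1)|z_{j_0}|\ge 96$, so with $n\sim R$ one always has $(t-1)n\gtrsim 1$. Thus you cannot conclude $|u|$ stays comparable to $m_{j_0}$ along the outward ray, and case~(i) of your dichotomy does not yield $m_{j_0}\gtrsim e^{-N}(|z_{j_0}|/R)^n$. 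Case~(ii) also fails going outward: the next disk has $|z_{j_1}|>|z_{j_0}|$, so the weight $|z_{j_1}|^{-n}$ is \emph{smaller}, and the extremality of $j_0$ is not contradicted unless $m_{j_1}$ increases by the huge factor $(|z_{j_1}|/|z_{j_0}|)^n\ge e^{96n/(2|z_{j_0}|)}$, which you cannot guarantee. The toy proof's walk goes toward larger $\Re z$ precisely because that is simultaneously the direction of larger weight $e^{k\Re z}$ \emph{and} of a decreasing gradient bound $Am_{j_0}e^{-k\Re(z-z_{j_0})}$. For the weight $|z|^{-n}$, that favorable direction is \emph{inward}, toward the origin, and the paper's proof walks only inward. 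The lower bound on $m_{j_0}$ in the paper comes not from a walk but from a separate pigeonhole/connectivity argument: $\max_{W_1}|u|$ is $\le(r/R)^{2k}<1/2$ while $\sup_{\Omega_1}|u|=1$ by normalization, so $\sup|\nabla u|\ge 1/(8R)$; combining with $|\nabla u(x)|\le(|x|/|z_{j_0}|)^kAm_{j_0}$ gives $m_{j_0}\ge\tfrac{1}{8AR}(r/2R)^k$.

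Two secondary gaps. First, the Taylor-polynomial subtraction at $0$ is not needed and distracts from the cleaner device used in the paper: restrict to the annulus $\Omega=\{r/2<|z|<R-1\}\setminus\cup 3D_j$, where $z^{-k}$ is holomorphic, and estimate $|f|=|\nabla u|/|z|^k$ separately on the inner boundary component $W_1$, the outer one $W_2$, and the disk boundaries; the point is to show $\max_{W_1}|f|$ and $\max_{W_2}|f|$ are \emph{strictly less than} $\sup_\Omega|f|$, which forces the max onto a disk boundary. Your version only records the max as a sum of three contributions without establishing which dominates, so you never actually obtain the clean bound $|\nabla u(x)|\le A\mu_{j_0}|x|^n$ the walk needs. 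Second, ruling out $W_2$ requires a quantitative gap between the radius on which you normalize ($R-R/64$) and the radius of the outer boundary of the analytic domain ($R-1$); the factor $\bigl(\tfrac{R-R/64}{R-7}\bigr)^k\le(126/127)^k$ is what absorbs $e^{N}$ once $k\ge C(N+R)$. Your setup, placing the outer boundary at $R-R/64$ itself, has no such gap and cannot beat the $e^{N}$ coming from the hypothesis.
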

 
 \begin{proof}

 WLOG, $\sup \limits _{B(0,R-R/64) \setminus 
 \cup 3D_j}|u|=1$. Fix $k=[C(N+R)]$ with sufficiently large $C>0$ and assume that
 $$ \sup \limits _{B(0,r) \setminus 
 \cup 3D_j}|u| \leq \left(\frac r R \right)^{3k}.$$
 
 Consider the
domain $$\Omega:=\{r/2<|z|< R - 1 \} \setminus  \cup (3D_j).$$ 
Let $W_1$ be the connected component of $\partial \Omega$ that intersects $\partial B(0,r/2)$.
Note that each point of $W_1$ is either on  $\partial B(0,r/2)$ or lies on some $\partial 3D_j$ that intersects $\partial B(0,r/2)$.\\
\textbf{Estimate on $W_1$.}
Recall that if $5D_j \subset B(0,R)$, we have
 \begin{enumerate}
 \item  $\max\limits_{\partial3 D_j }|u| \leq A \min\limits_{\partial 3D_j}|u|$,
 \item  $\max\limits_{\partial 3D_j}|\nabla u| \leq A \min\limits_{\partial 3D_j}|u|$.
 \end{enumerate}
Hence on $W_1 \setminus \partial B(0,r/2) $, we have
$$ |u|,|\nabla u| \leq A \sup_{B(0,r)\setminus \cup (3D_j)} |u| \leq A  \left(\frac r R \right)^{3k}. $$
If $x \in \partial B(0,r/2) \setminus \cup (3D_j)$, then either $x \in 4D_j$ for some $j$ or $B(x, min(1,r/2)) \subset B(0,r) \setminus \cup (3D_j) $. In the first case $u$ does not change sign in $B(x,1)$ and 
$$|\nabla u(x)| \leq A|u(x)| \leq A\sup_{B(0,r)\setminus \cup (3D_j)} |u|\leq A\left(\frac r R \right)^{3k}.$$
In the second case, we have 
$$ |\nabla u(x)| \leq\frac{A}{\min(1,r/2)}\sup_{B(0,r)\setminus \cup (3D_j)}|u| \leq \frac{A}{\min(1,r/2)} \left(\frac r R \right)^{3k}. $$
 Thus in all cases, if  $C$ in the definition of $k$ is large enough, we have
 $$ \max_{W_1}|u|, \max_{W_1}|\nabla u| \leq \frac{A}{\min(1,r/2)} \left(\frac r R \right)^{3k} \leq \left(\frac r R \right)^{2k} \quad $$
 because  $$\left( \frac R r \right)^{k} \geq 4^k > A$$
 and
 $$\left( \frac R r \right)^{k} \geq 4^{k-1}  \frac R r  \geq \frac{2A}{r}.$$
 
 Let $W_2$ be the connected component of $\partial \Omega$ that intersects $\partial B(0,R-1)$.
 Note that each point of $W_2$ is either on  $\partial B(0,R-1)$ or lies on some $\partial 3D_j$ that intersects $\partial B(0,R-1)$.\\
 \textbf{Estimate on $W_2$.} Any point $x\in \overline{B(0,R-1)}\setminus \cup (3D_j)$ is either in $4D_j$ for some $j$
 or $x\in \overline{B(0,R-1)}\setminus \cup (4D_j)$. In the first case $u$ does not change sign in $B(x,1)$ and therefore
 $$|\nabla u(x)|\leq A|u(x)| \leq A e^N.$$
 In the second case $B(x,1) \subset B(0,R)\setminus \cup (3D_j)$ and $|\nabla u(x)| \leq A e^N$.
 Thus 
 $$ \max_{W_2}|u|, \max_{W_2}|\nabla u| \leq A e^N.$$
 Note also that 
 $$ W_2 \subset \overline{B(0,R-1)}\setminus B(0,R-7).$$

 Now, consider the analytic in $\Omega$ function 
  $$f(z)=\frac{u_x-iu_y}{z^k}, \quad |f(z)|=\frac{|\nabla u(z)|}{|z|^k}.$$
  Since $$\sup\limits_{B(0,R-R/64)\setminus \cup (3D_j)}|u|=1> \sup_{B(0,r/2)\setminus \cup 3D_j}|u|,$$  $$\max\limits_{W_1}  |u| \leq \left( \frac r R \right)^{2k} < \frac{1}{2},$$
 and since any point in $$\Omega_1=B(0,R-R/64)\setminus \cup (3D_j)$$ can be connected with $W_1$ by a curve of length at most $4R$ within $\Omega_1$, we must
 have $$\sup_\Omega|\nabla u| \geq \sup_{\Omega_1}|\nabla u| \geq \frac{1}{8R}$$ and
 $$ \sup_{\Omega_1} |f|\geq \left( R-R/64\right)^{-k} \sup_{\Omega_1}|\nabla u| \geq \frac{1}{8R} \left( R-R/64 \right)^{-k}.$$
However 
$$ \max_{W_1}|f|\leq \max_{W_1}|\nabla u |\left(\frac{2}{r} \right)^k \leq \left(\frac{r}{R} \right)^{2k}\left(\frac{2}{r} \right)^k= \left(\frac{2r}{R} \right)^{k} R^{-k} \leq $$ $$ \leq 2^{-k} R^{-k} < \frac{1}{8R} R^{-k} < \sup_{\Omega_1} |f|\leq \sup_{\Omega} |f| $$
and
$$ \max_{W_2}|f|\leq A e^N \frac{1}{(R-7)^k} = A e^N \left( \frac{R-R/64}{R-7}\right)^k \left( R-R/64\right)^{-k}\leq 
$$ $$\leq A e^N \left( \frac{126}{127}\right)^k \left( R-R/64\right)^{-k} < \frac{1}{8R} \left( R-R/64\right)^{-k} \leq \sup_{\Omega} |f| $$
if $R-7> \frac{127}{128}R $ and $C$ in the definition of $k$ is large enough. 
By the maximum principle for holomorphic functions $\sup_\Omega |f|$ is achieved on $\partial 3D_j$ for some 
$3D_j\subset B(0,R-1)\setminus \overline{B(0,r/2)}$. 

For every disk $D_j$ with $3D_j\subset B(0,R-1)\setminus \overline{B(0,r/2)}$, consider the point $z_j$ on $\partial3D_j$ closest to the origin. All $3D_j$ that are not in the annulus  $B(0,R-1)\setminus \overline{B(0,r/2)}$ will not be considered further. Put $m_j=\min_{\partial3D_j}|u|$. Let $j_0$ be the index such that $$\frac{m_{j_0}}{|z_{j_0}|^k} = \max_j \frac{m_{j}}{|z_{j}|^k}.$$
If $\sup_\Omega|f|$ is achieved on $\partial3D_{j_1}$, then for $x\in \Omega$,
$$ \frac{|\nabla u(x)|}{|x|^k}\leq \frac{1}{|z_{j_1}|^k} \max_{\partial3D_{j_1}}|\nabla u|\leq A \frac{m_{j_1}}{|z_{j_1}|^k} \leq A \frac{m_{j_0}}{|z_{j_0}|^k}.$$
So we conclude that 
$$|\nabla u(x)| \leq \left( \frac{|x|}{|z_{j_0}|} \right)^k A m_{j_0}.$$
Recalling that $\frac{1}{8R} \leq \sup_\Omega|\nabla u|$, we get
$$ \frac{1}{8R} \leq \left(\frac{2R}{r}\right)^k Am_{j_0},$$
so 
$$ |u(z_{j_0})| \geq m_{j_0} \geq \frac{1}{8AR} \left(\frac{r}{2R}\right)^k.$$

Now, let $(sz_{j_0},z_{j_0})$ be the longest subinterval of the radius $[0,z_{j_0}]$ starting at $z_{j_0}$ that is contained in $\Omega$. We have 
$$|u(sz_{j_0})| \geq |u(z_{j_0})| - |z_{j_0}| \int_{s}^{1}|\nabla u(t z_{j_0})| dt \geq |u(z_{j_0})| - |z_{j_0}|   \int_{0}^{1}  Am_{j_0} t^k  dt \geq $$
$$ \geq  m_{j_0}(1-\frac{AR}{k+1}) \geq \frac{m_{j_0}}{2}$$
if the constant $C$ in the definition of $k$ is large enough.
Note that $$\left(\frac{R}{r}\right)^{k} \geq 4^{k} > 16 AR \cdot 2^k.$$
Hence $$\frac{m_{j_0}}{2} \geq \frac{1}{16AR} \left(\frac{r}{2R}\right)^k > \left(\frac{r}{R}\right)^{2k}$$ 
and the point $sz_{j_0}$ cannot belong to $W_1$, whence it belongs to some $\partial 3D_j$ with $3D_j \subset B(0,R-1)\setminus \overline{B(0,r/2)}$.
Then
$$ \frac{m_j}{|z_j|^k} \geq \frac{|u(sz_{j_0})|}{A|sz_{j_0}|^k} \geq \frac{1}{2As^k} \frac{m_{j_0}}{|z_{j_0}|^k}.$$
It remains to notice that, since the distance from $3D_j$ to $3D_{j_0}$ is at least $96$, we have 
$$ s^k \leq (1-96/R)^k< \frac{1}{2A} $$
if the constant $C$ in the definition of $k$ is large enough.
But then $\frac{m_j}{|z_j|^k} > \frac{m_{j_0}}{|z_{j_0}|^k}$, which contradicts  the choice of $j_0$.

 \end{proof}

\section{Appendix. }

\subsection{The toy problem for harmonic functions in higher dimensions: a proof with extra logarithm.}

Here we present another proof of a slightly worse bound for the toy problem for harmonic functions in a punctured domain. However this proof works in higher dimensions.
We will denote by $B_R$ the ball in $\mathbb{R}^n$ with center at $0$ and of radius $R$.\\
\textbf{Toy problem with extra logarithm.}
Let $D_j$ be a collection of unit, $100$ -- separated balls on the plane and let $R>100$.
Then for any harmonic function $h$  in $B_R \setminus \cup D_j$ such that $h$ does not change sign in each $B_R\cap 5D_j\setminus D_j$,
we have 
$$ \int\limits_{B_{R}\setminus(B_{R/2} \bigcup (\cup 3D_j))} h^2 \geq \exp(-CR\log R) \int\limits_{B_{R/2}\setminus \cup 3D_j} h^2,$$
 where $C$ is an absolute positive constant.
 
 \noindent This inequality implies that Theorem \ref{thm: toy} holds in higher dimensions if we assume that
 $|u(z)| \leq e^{-L|z|\log|z|}$ for sufficiently large $L$.
 
\begin{proof}
The proof is based on the Carleman inequality with log linear weight.
Most of Carleman inequalities require strict log convexity-type properties of the weight.
The next inequality is an exception:
\begin{equation} \label{eq: Carleman}
 \int_{B_R} |{\Delta} u|^2 e^{kx_1} \geq \frac{ck^2}{R^2} \int_{B_R} u^2 e^{kx_1}
\end{equation}
for any $u\in C^2_0(B_R)$. The inequality is not difficult to prove.
Let $v=ue^{kx_1/2}$, then 
$$ e^{kx_1/2} \Delta u = \Delta v - k v_{x_1} + \frac{k^2}{4} v$$
and 
$$\int_{B_R} |{\Delta} u|^2 e^{kx_1}= \int_{B_R} |\Delta v + \frac{k^2}{4} v|^2  +  \int_{B_R} |k v_{x_1}|^2 - \int_{B_R} 2 (\Delta v + \frac{k^2}{4} v)  kv_{x_1}. $$
Note that 
$$\int_{B_R} 2  v v_{x_1} =\int_{B_R} \frac{\partial}{\partial x_1}v^2=0.$$
Integrating by parts, we see that
$$-\int_{B_R}  \Delta v_{x_1} v=\int_{B_R}  \Delta v v_{x_1} = \int_{B_R}  v \Delta v_{x_1}$$
and therefore $$\int_{B_R}  \Delta v v_{x_1} = 0.$$
Hence
$$\int_{B_R} |{\Delta} u|^2 e^{kx_1}= \int_{B_R} |\Delta v + \frac{k^2}{4} v|^2  +  \int_{B_R} |k v_{x_1}|^2  \geq \int_{B_R} |k v_{x_1}|^2 \geq $$
(by Poincare's inequailty)
$$ \geq \frac{\pi^2}{4}\frac{k^2}{ R^2} \int_{B_R}  v^2= c\frac{k^2}{ R^2} \int_{B_R} u^2 e^{kx_1}.$$
So we proved \eqref{eq: Carleman} and would like to apply it for the harmonic function $h$. However $h$ is not in 
$C^2_0(B_{R})$ and inequality \eqref{eq: Carleman} should be applied to $$u=h\eta,$$
where $\eta$ is a positive, $C^2$-smooth cut-off function:
\begin{itemize}
\item $\eta=0$ in each $2D_j$ and in $\{x: |x|>R-11\}$,
\item $\eta=1$ in $B_{\frac{3}{4}R \setminus \cup 3D_j}$,
\item the function $\eta$, as well as its  first and second derivatives are bounded by a numerical constant.
\end{itemize}

We will choose the parameter $k$ later. For now we have
$$\int_{B_{\frac{3}{4}R}\setminus\cup 3D_j} |{\Delta} h|^2 e^{kx_1} + \textup{``cut-off integrals"} \geq \frac{ck^2}{R^2} \int_{B_{\frac{3}{4}R}\setminus \cup 3D_j} h^2 e^{kx_1} =: \textup{RHS}.$$
It is good that $\Delta h=0$, so only the cut-off integrals are left on the left-hand side.
There are two kinds of cut-off integrals:
$$\textup{I}= \sum\limits_{5D_j \subset B_{\frac{3R}{4}}}  \int_{3D_j\setminus 2D_j} \textup{``cut-off terms"}$$
and 
$$ \textup{II}= \int_{B_{R-11}\setminus B_{(\frac{3R}{4}-10)}} \textup{``cut-off terms"}$$
where 
$$ | \textup{``cut-off terms"}| \lesssim (h^2+|\nabla h|^2)e^{kx_1}$$
(recall that $\Delta h =0$).
Note that $$\int_{3D_j\setminus 2D_j}  e^{kx_1} \lesssim  e^{-k/2}\int_{4D_j\setminus 3D_j}  e^{kx_1}$$
because $4D_j\setminus 3D_j$ contains an open disk of radius $\frac{1}{4}$, where the function $e^{kx_1}$
is pointwise bigger than $e^{k/2} \cdot \sup_{3D_j\setminus 2D_j}  e^{kx_1}$.
Now, assuming $5D_j \subset B_R$ we will use the sign condition in $5D_j\setminus D_j$. By the Harnack inequality and the Cauchy estimate
we know that there is a constant $a_j\geq 0$ such that 
$$ |h| \asymp a_j \textup{ and } |\nabla h| \lesssim a_j \textup{ in } 4D_j\setminus 2D_j.$$  
So
$$\int_{3D_j\setminus 2D_j} \textup{``cut-off terms"} \lesssim
 a_j^2 \int_{3D_j\setminus 2D_j}  e^{kx_1} \lesssim a_j^2 e^{-k/2} \int_{4D_j\setminus 3D_j}  e^{kx_1} \lesssim $$
 $$ \lesssim e^{-k/2} \int_{4D_j\setminus 3D_j} h^2 e^{kx_1}.$$
Hence $$ \textup{I} \lesssim e^{-k/2}\int_{B_{\frac{3}{4}R}\setminus \cup 3D_j} h^2 e^{kx_1}.$$
Note that
$$\textup{RHS}= \frac{ck^2}{R^2} \int_{B_{\frac{3}{4}R}\setminus \cup 3D_j} h^2 e^{kx_1} > 2 \textup{I}$$
if $$ k^2/{R^2} \gg e^{-k/2}.$$
We make the choice  $$ k = C \log R$$
and it yields 
$$ \sup_{B_{R}} e^{kx_1} \leq e^{CR\log R}.$$
Since $$\textup{I}+ \textup{II} \geq \textup{RHS} \quad \textup{and} \quad I\leq \frac{1}{2}\textup{RHS},$$
we have 
$$  \int_{B_{R-11}\setminus B_{(\frac{3R}{4}-10)}} \textup{``cut-off terms"} = \textup{II} \geq  \frac{1}{2}\textup{RHS} \asymp k^2/{R^2} \int_{B_{\frac{3}{4}R}\setminus \cup 3D_j} h^2 e^{kx_1} \geq $$
$$ \geq \exp(-CR\log R)\int_{B_{\frac{1}{2}R}\setminus \cup 3D_j} h^2.$$
If $5D_j \subset B_R$, then $\int_{3D_j\setminus 2D_j} h^2 \asymp \int_{4D_j\setminus 3D_j} h^2$, whence
$$ \int\limits_{B_{R}\setminus(B_{R/2} \bigcup (\cup 3D_j))} h^2 
\geq c_1  \int\limits_{B_{R-10}\setminus(B_{R/2} \bigcup(\cup 2D_j))} h^2 \geq $$
(by Cauchy estimate)
 $$\geq c_2  \int\limits_{B_{R-11}\setminus(B_{R/2} \bigcup(\cup 3D_j))} \! \! \! (h^2 +|\nabla h|^2)$$
and therefore
$$ \sup_{B_{R}} e^{kx_1}  \int\limits_{B_{R}\setminus(B_{R/2} \bigcup (\cup 3D_j))} h^2  \geq  c_2 \int\limits_{B_{R-11}\setminus(B_{R/2} \bigcup(\cup 3D_j))} (h^2 +|\nabla h|^2) e^{kx_1} \geq c_3 \textup{II}. $$
Thus 
$$ \int\limits_{B_{R}\setminus(B_{R/2} \bigcup( \cup 3D_j))} h^2  \geq \exp(-C'R\log R) \int\limits_{B_{R/2}\setminus \cup 3D_j} h^2.$$
\end{proof} 

\noindent \textbf{Deduction of Theorem \ref{exercise 1} from Theorem \ref{local 3}}.
We may assume that $$\sup\limits_{B(0,R')\setminus \cup 3D_j}|h|=\sup\limits_{B(0,R'/8)\setminus \cup 3D_j}|h|,$$ otherwise the statement is trivial.
Consider any point $x$ on $\partial B(0,R'/4)\setminus \cup 3D_j$.

Note that $ B(0,R'/8) \subset B(x,3R'/8)$ and $B(x,3R'/4) \subset B(0,R')$. Hence
$$ \sup\limits_{B(x,3R'/8)\setminus \cup 3D_j}|h|=\sup\limits_{B(x,3R'/4)\setminus \cup 3D_j}|h|.$$
Applying Theorem \ref{local 3} for the disk with center at $x$ (in place of $0$) of radius $R=3R'/4$ and $N=0$, we obtain the bound
$$\sup\limits_{\{R'/8<|z|< R' \}\setminus \cup 3D_j}|h|\geq \sup\limits_{B(x,R'/8)\setminus \cup 3D_j}|h| \geq e^{-CR'}\sup\limits_{B(0,R')\setminus \cup 3D_j}|h|.$$

\subsection{Sketches of general elliptic theory.}

\begin{fact} \label{fact1}
 Denote by $E(z)= 
 \frac{1}{2\pi} \log|z|$ the fundamental solution of the Laplace operator on the plane
 in the sense that for every $C^\infty$ compactly supported function $h$, we have
 $$ h = E * \Delta h.$$
 
\end{fact}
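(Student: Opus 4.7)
The plan is to first establish the distributional identity $\Delta E = \delta_0$ on $\R^2$ and then deduce $h = E * \Delta h$ from the standard observation that convolution with $\delta_0$ is the identity operator. Note that $E(z) = \frac{1}{2\pi}\log|z|$ is locally integrable on $\R^2$ (the singularity $\log r$ is integrable against $r\,dr$ near $r=0$) and hence defines a distribution. A direct computation in polar coordinates using $\Delta = \partial_r^2 + r^{-1}\partial_r$ on radial functions shows $\Delta E \equiv 0$ on $\R^2 \setminus \{0\}$ in the classical sense.

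To identify the singular part at the origin, fix $h \in C_c^\infty(\R^2)$ with $\operatorname{supp} h \subset B(0,R)$ and, by Lebesgue dominated convergence, write
\[
\int_{\R^2} E\,\Delta h\, dm_2 \;=\; \lim_{\eps \to 0^+}\int_{B(0,R)\setminus B(0,\eps)} E\,\Delta h\, dm_2 .
\]
I would then apply Green's second identity on the annulus $\Omega_\eps = B(0,R) \setminus \overline{B(0,\eps)}$. The boundary contribution on $\partial B(0,R)$ vanishes because $h$ and $\nabla h$ vanish there, and the interior contribution from $\Delta E$ is zero on $\Omega_\eps$. With the outward normal to $\Omega_\eps$ on $\partial B(0,\eps)$ pointing toward the origin, one has $\partial_n E = -\tfrac{1}{2\pi\eps}$ and $E \equiv \tfrac{\log\eps}{2\pi}$ on that circle, so the only surviving boundary term is
\[
-\frac{\log\eps}{2\pi}\int_{\partial B(0,\eps)}\partial_r h\, ds \;+\; \frac{1}{2\pi\eps}\int_{\partial B(0,\eps)} h\, ds .
\]
The first summand is bounded by $\eps|\log\eps|\,\|\nabla h\|_\infty$ and vanishes as $\eps \to 0^+$; the second is the circular average of $h$ around the origin, which converges to $h(0)$ by continuity. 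Hence $\langle E, \Delta h\rangle = h(0)$ for every such $h$, i.e.\ $\Delta E = \delta_0$ in $\mathcal D'(\R^2)$.

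Finally, for any $h \in C_c^\infty(\R^2)$ and $x \in \R^2$, the test function $\phi_x(y) := h(x-y)$ lies in $C_c^\infty(\R^2)$ and satisfies $(\Delta \phi_x)(y) = (\Delta h)(x-y)$, so a change of variables $y \mapsto x - y$ gives
\[
(E * \Delta h)(x) \;=\; \int_{\R^2} E(y)\,(\Delta h)(x-y)\,dy \;=\; \langle E, \Delta \phi_x\rangle \;=\; \phi_x(0) \;=\; h(x) ,
\]
where the middle equality is the defining pairing and the third equality uses the distributional identity $\Delta E = \delta_0$ applied to $\phi_x$. The only delicate point in the whole argument is the careful sign bookkeeping in the boundary integrals on $\partial B(0,\eps)$ and the verification that the logarithmic blowup of $E$ is killed by the vanishing circumference factor in the limit $\eps \to 0^+$; once that limit is handled correctly, every other step is purely formal.
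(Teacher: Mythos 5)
The paper states Fact~\ref{fact1} without proof, treating it as standard background, so there is no ``paper's approach'' to compare against. Your argument is the textbook proof and it is correct: verifying local integrability of $E$, showing $\Delta E = 0$ away from the origin, applying Green's second identity on the annulus $B(0,R)\setminus \overline{B(0,\eps)}$, checking that the $\eps|\log\eps|$ term vanishes while the spherical average of $h$ converges to $h(0)$, and then translating the distributional identity $\Delta E = \delta_0$ into $E*\Delta h = h$ by the change of variables $y \mapsto x-y$. The sign bookkeeping on $\partial B(0,\eps)$ is handled correctly (outward normal for the annulus points toward the origin, giving $\partial_n E = -1/(2\pi\eps)$), and the final step correctly uses that $\Delta$ is self-adjoint under the distributional pairing, with no sign issue since $\Delta(h(x-\cdot)) = (\Delta h)(x-\cdot)$. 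This is a complete and correct proof of the stated fact.
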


\begin{fact} \label{fact2}
 Let $\Omega$ be a bounded  open set and $g \in L^1(\Omega)$. Put $f= E* g$.
 Then 
 \begin{enumerate}
 \item $f\in L^p(\Omega)$ for all $p \geq 1$.
 \item $\Delta f =g $ in the sense that for every $h \in C_0^\infty(\Omega)$, we have 
 $$\int_{\Omega} f \Delta h = \int_{\Omega} gh.$$
 \end{enumerate}

\end{fact}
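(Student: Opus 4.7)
The plan is to dispatch both parts via direct integral estimates, using only the local $L^p$-integrability of the kernel $E(z) = \frac{1}{2\pi}\log|z|$ together with Fact \ref{fact1}, which will let me transfer the Laplacian off $f$ and onto the test function.

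For part (1), I would fix $R > 0$ with $\Omega - \Omega \subset B(0,R)$, which is possible since $\Omega$ is bounded. Passing to polar coordinates,
$$\|E\|_{L^p(B(0,R))}^p \;=\; \frac{1}{(2\pi)^p} \int_0^R |\log r|^p \cdot 2\pi r \, dr \;<\; \infty$$
for every $p \in [1,\infty)$, since the logarithmic singularity at the origin is tame. Writing $f(x) = \int_\Omega E(x-y) g(y)\, dy$ and applying Minkowski's integral inequality would yield
$$\|f\|_{L^p(\Omega)} \;\leq\; \int_\Omega |g(y)|\, \|E(\cdot - y)\|_{L^p(\Omega)}\, dy \;\leq\; \|E\|_{L^p(B(0,R))}\, \|g\|_{L^1(\Omega)},$$
which gives the claim.

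For part (2), I would take $h \in C_0^\infty(\Omega)$, extended by zero to a function in $C_0^\infty(\mathbb{R}^2)$, and compute
$$\int_\Omega f(x)\, \Delta h(x)\, dx \;=\; \int_\Omega \int_\Omega E(x-y)\, g(y)\, \Delta h(x)\, dy\, dx.$$
To justify Fubini I would use that $\Delta h$ is bounded with compact support, so the total integral of the absolute values is bounded by $\|\Delta h\|_\infty \, \|E\|_{L^1(B(0,R))}\, \|g\|_{L^1(\Omega)} < \infty$. Swapping the order and using that $E$ is even so that $(E * \Delta h)(y) = \int E(x-y)\, \Delta h(x)\, dx$, I would conclude
$$\int_\Omega f\, \Delta h \;=\; \int_\Omega g(y)\, (E * \Delta h)(y)\, dy \;=\; \int_\Omega g\, h,$$
where the last equality is Fact \ref{fact1} applied to $h \in C_0^\infty(\mathbb{R}^2)$.

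The only potentially delicate point will be the Fubini verification, but with $g \in L^1(\Omega)$ and $\Delta h \in L^\infty(\mathbb{R}^2)$ of compact support, the local $L^1$-integrability of the logarithmic kernel makes this entirely routine; I do not expect any substantive obstacle beyond bookkeeping.
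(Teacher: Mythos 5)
Your proof is correct and follows essentially the same route as the paper: for part (1) the paper truncates the kernel and invokes Young's convolution inequality, which is equivalent to your Minkowski-integral-inequality argument; for part (2) both proofs apply Fubini and then Fact \ref{fact1}. The only cosmetic difference is that you spell out the Fubini justification and the evenness of $E$, which the paper leaves implicit.
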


\noindent \textbf{Agreement}. Writing $E*g$ we assume that $g$ is extended to $\mathbb{R}^2\setminus \Omega$ by zero.
\begin{proof} \quad

\begin{enumerate}
 \item
Let $D=\textup{diam }\Omega$. Let $E_D= \mathbbm{1}_{B(0,D)} E$. Then in $\Omega$ we have $f=g* E_D$. 
Since $E_D \in L^p(\mathbb{R}^2)$ for all $p\geq 1$ and $g\in L^1$, the result follows from Young's convolution inequality.

\item 
We have 
$$\int f\Delta h = \int (E*g)\Delta h =\int \limits_{\Omega \times \Omega} E(z-\zeta)g(\zeta) \Delta h(z)dm_2(z)dm_2(\zeta)=$$
$$= \int\limits_{\Omega} g(\zeta) \left[ \int\limits_{\Omega} E(z-\zeta)\Delta h(z) dm_2(z) \right] dm_2(\zeta)= \int\limits_\Omega gh. $$
\end{enumerate}

\end{proof}

\begin{fact} \label{fact3}
 Let $V \in L^\infty(\Omega)$, $u\in L^1_{loc}(\Omega)$ and $\Delta u+ Vu=0$ in $\Omega$
 in the sense that for every $h\in C^{\infty}_{0}(\Omega)$ we have 
$$\int\limits_{\Omega}u\Delta h + \int\limits_{\Omega}Vuh=0.$$
Then $u \in L^p_{loc}(\Omega)$ for every $ p\geq 1$. 
 
\end{fact}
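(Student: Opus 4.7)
The strategy is to invert the Laplacian via the Newtonian potential from Facts \ref{fact1}--\ref{fact2}, treating the identity $\Delta u=-Vu$ as a Poisson equation whose right-hand side is already in $L^1_{loc}(\Omega)$ because $V\in L^\infty$ and $u\in L^1_{loc}$. A single application of potential theory should therefore be enough to promote $u$ into $L^p_{loc}$ for every finite $p$; no bootstrap iteration is needed.

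Fix an arbitrary open subset $\Omega'$ compactly contained in some intermediate open set $\Omega''$ which is itself compactly contained in $\Omega$. Choose a cutoff $\chi\in C_0^\infty(\Omega)$ with $\chi\equiv 1$ on $\Omega''$, and set $g_0:=-\chi V u$. Then $g_0\in L^1(\Omega)$ is compactly supported, and after extending by zero I define $f:=E\ast g_0$ on $\mathbb{R}^2$. Fact \ref{fact2} gives $f\in L^p(\Omega)$ for every $p\geq 1$ together with $\Delta f=g_0$ in the sense of distributions. Because $\chi\equiv 1$ on $\Omega''$, the distributional identities $\Delta u=-Vu$ (the hypothesis, reread on $\Omega''$) and $\Delta f=-Vu$ coincide on $\Omega''$, so $\Delta(u-f)=0$ in the sense of distributions on $\Omega''$.

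At this point I would invoke Weyl's lemma for locally integrable functions: any distributional solution of Laplace's equation that lies in $L^1_{loc}$ is in fact smooth and harmonic. Applied to $u-f\in L^1_{loc}(\Omega'')$ this forces $u-f$ to be smooth on $\Omega''$, and in particular bounded on the compactly contained subset $\Omega'$. Combining this bound with $f\in L^p(\Omega)$ yields $u=(u-f)+f\in L^p(\Omega')$ for every $p\geq 1$. Since $\Omega'$ was an arbitrary relatively compact open subset of $\Omega$, this is precisely the statement that $u\in L^p_{loc}(\Omega)$ for all $p\geq 1$.

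The main conceptual obstacle is the appeal to Weyl's lemma in its $L^1_{loc}$ form: one mollifies $u-f$ by a smooth approximate identity, observes that the mollifications are still distributionally (hence classically) harmonic on slightly shrunken subdomains, and passes to the limit using mean-value estimates to recover the harmonicity (and smoothness) of $u-f$ itself. Everything else in the argument is routine bookkeeping with the cutoff $\chi$ and the convolution estimate of Fact \ref{fact2}.
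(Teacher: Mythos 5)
Your proof is correct and takes essentially the same route as the paper's: solve the Poisson equation by convolving the compactly supported term $Vu$ with the Newtonian potential $E$, invoke Fact~\ref{fact2} to place the potential in every $L^p$, and apply Weyl's lemma to conclude that the remainder is harmonic (hence locally bounded). You are slightly more explicit about the cutoff and nested subdomains, and you handle the sign of $E*(Vu)$ more carefully than the paper does, but the core decomposition and the appeal to Weyl's lemma are identical.
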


\begin{proof}
Passing to a smaller bounded domain $\Omega'$, we may assume that $u\in L^1(\Omega)$, $\Omega$ is bounded.
Consider $f= E * (Vu)$. By Fact \ref{fact2}, $f \in L^p(\Omega)$ for all $p \geq 1$. Note that $u-f \in L^1(\Omega)$ and $\Delta(u-f)=0$ in the sense of distributions. Hence, by Weyl's lemma, $u-f$ is harmonic in $\Omega$,
so $u-f \in L^p_{loc}(\Omega)$ and therefore $u \in L^p_{loc}(\Omega) $ too.
\end{proof}

\begin{fact} \label{fact4}
 Let $g \in L^p(\Omega)$ with $p>2$ and
 let $\Omega$ be bounded. Then $g * E \in C^1(\Omega)$.
 
\end{fact}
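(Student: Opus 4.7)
The plan is to show that the distributional gradient of $f = g*E$ equals the convolution $g*\nabla E$, and that this convolution is a continuous function on $\Omega$; a standard mollification argument then upgrades $f$ to a $C^1$ function.

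First, $f \in L^1_{loc}(\Omega)$ by exactly the argument of Fact \ref{fact2}. To compute the distributional gradient, I would take $h \in C^\infty_0(\Omega)$ and write
$$\int f\, \partial_i h \, dm_2 = \iint g(\zeta) E(z-\zeta)\, \partial_i h(z) \, dm_2(z)\, dm_2(\zeta).$$
Fubini applies because $E \in L^1_{loc}$ and $h$ has compact support, so the absolute integrand is integrable. Using that the distributional derivative of $E$ coincides with the $L^1_{loc}$ function $\partial_i E(z)=\frac{z_i}{2\pi|z|^2}$, integration by parts in $z$ gives
$$\int E(z-\zeta)\, \partial_i h(z)\, dm_2(z) = -\int \partial_i E(z-\zeta)\, h(z)\, dm_2(z),$$
and Fubini again yields $\partial_i f = g * \partial_i E$ in the sense of distributions.

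Next I would prove that $g * \partial_i E$ is continuous on $\Omega$. Fix a compact $K \subset \Omega$ and set $D = \mathrm{diam}(K \cup \Omega)$. For every $x \in K$ the convolution $(g*\partial_i E)(x)$ is unchanged if we replace $\partial_i E$ by its truncation $\tilde E_i = \mathbbm{1}_{B(0,D)}\, \partial_i E$, since $g$ is extended by zero outside $\Omega$. Because $|\partial_i E(z)| \lesssim |z|^{-1}$, the truncation $\tilde E_i$ lies in $L^q(\mathbb{R}^2)$ for every $q < 2$. Choosing the H\"older conjugate $q = p/(p-1)$, which lies in $(1,2)$ \emph{precisely} because $p>2$, the estimate
$$|(g*\tilde E_i)(x+\tau)-(g*\tilde E_i)(x)| \le \|g\|_p\, \|\tilde E_i(\cdot+\tau)-\tilde E_i\|_q$$
combined with continuity of translations in $L^q$ (using $q<\infty$) shows that $g * \tilde E_i$ is continuous, hence so is $g*\partial_i E$ on $K$. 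An analogous (easier) argument, replacing $\partial_i E$ by $E$ itself and noting that $E$ is locally in every $L^q$, shows that $f$ is continuous on $\Omega$.

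Finally, to conclude $f \in C^1(\Omega)$, I would mollify: $f_\varepsilon = f * \rho_\varepsilon$ is smooth on a subdomain, with $\nabla f_\varepsilon = (g * \nabla E)*\rho_\varepsilon$ converging uniformly on compacts of $\Omega$ to the continuous function $g * \nabla E$. The fundamental theorem of calculus for $f_\varepsilon$ along line segments then passes to the limit and identifies $f$, off a null set, with a $C^1$ function whose gradient equals $g * \nabla E$ pointwise. The main obstacle throughout is the singularity of $\nabla E$ at the origin: both the Fubini/integration-by-parts step producing the distributional gradient, and the truncation/translation-continuity step producing continuity, rely in an essential way on the assumption $p>2$, which puts the dual exponent $q$ below $2$.
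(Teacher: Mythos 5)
Your argument is correct, but it takes a genuinely different route from the paper's. The paper works directly at the level of the kernel: it expands $E(z+t)-E(z)=\frac{1}{2\pi}\Re\frac{t}{z}+O(W(z/t))$, where $W$ is an explicit majorant of the second-order error, checks $W\in L^q$ for the conjugate exponent $q=p'\in(1,2)$, and applies H\"older once; the scaling $\|W(\cdot/t)\|_q=|t|^{2/q}\|W\|_q=o(|t|)$ shows that $g*E$ is differentiable at every point with derivative $z\mapsto \Re\bigl(t\,[g*\tfrac{1}{2\pi\cdot}](z)\bigr)$, which is already continuous, so $C^1$ follows in one stroke. You instead factor the proof into three standard modules: identify the distributional gradient as $g*\nabla E$ via Fubini and integration by parts (using $E\in W^{1,1}_{loc}$); prove continuity of $g*\nabla E$ by truncating $\nabla E$ to $L^q$, $q=p'<2$, and invoking continuity of translations; and finally upgrade to genuine $C^1$ by mollification and passing the fundamental theorem of calculus to the limit. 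Both proofs turn on exactly the same arithmetic fact — $|z|^{-1}$ is locally $L^q$ precisely when $q<2$, which forces $p>2$ — but the paper's Taylor-expansion argument gets differentiability and the formula for the derivative simultaneously, while your decomposition is more modular (each step is a self-contained standard lemma) at the cost of the extra mollification step needed to convert ``distributional gradient is continuous'' into ``classically $C^1$.'' Both are fine; if anything, your version is closer to how one would generalize to other kernels, while the paper's is shorter once the expansion of $\log|1+w|$ is written down.
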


\begin{proof}
 $$E(z+t)-E(z)= \frac{1}{2\pi}(\log|z+t|-\log|z|)= \frac{1}{2\pi} \log\left|1+\frac{t}{z}\right| =$$
 
 $$= \frac{1}{2\pi} 
\Re \frac{t}{z} + O\left(
\begin{cases}
\frac{|t|^2}{|z|^2}, \quad \frac{1}{2} \geq \frac{|t|}{|z|} \\
\frac{|t|}{|z|}+|\log \bigl|1+\frac{t}{z}|\bigr|,\quad  \frac{1}{2} \leq \frac{|t|}{|z|}
\end{cases}
\right).
$$
Define $$ W(\zeta) =\begin{cases}
\frac{1}{|\zeta|^2}, \quad \zeta > 2  \\
\frac{1}{|\zeta|}+\bigl|\log|1+\frac{1}{\zeta}|\bigr|,\quad   \zeta \leq 2.
\end{cases}$$

 Taking the convolution and applying Holder's inequality, we have
 
 $$ (g*E)(z+t)- (g*E)(z) =  \left[ g*  \frac{1}{2\pi} 
\Re \frac{t}{\cdot} \right](z) + O\left(\|g\|_p \|W(\cdot/t)\|_q\right) = $$
$$= \Re\left(t  \left[g*\frac{1}{2\pi \cdot} \right] (z)\right) + O\left(\|W(\cdot/t)\|_q\right), \quad \text{where } 1/p+1/q=1.$$
Since $g*\frac{1}{2\pi \cdot} \in C(\Omega)$,  the first term (as a function of $t \in \mathbb{C}=\mathbb{R}^2$) is a linear operator
from $\mathbb{R}^2$ to $\mathbb{R}$, which depends continuously on $z$.
It is enough to show that $\|W\|_q< \infty$ because $$\|W(\cdot/t)\|_q =   \|W\|_q |t|^{2/q} = \|W\|_q \,o(t) \quad \text{ as } t \to 0$$
 ($1<q<2$ if $p>2$). 
Indeed,
$$\int|W|^q  \lesssim \int_{|\zeta|>2} \frac{1}{|\zeta|^{2q}} + \int_{|\zeta|\leq 2} \left( \frac{1}{|\zeta|^q}+ \left|\log\Bigl|1+\frac{1}{\zeta}\Bigr|\right|^{q} \right)< \infty.$$

\end{proof}

\begin{fact} \label{fact5}
 Let $V \in L^\infty(\Omega)$. If $\Delta u + Vu=0$ in $\Omega$ in the sense of distributions and 
$u \in L^1_{loc}(\Omega)$, then $u \in C^1(\Omega)$. 
 
\end{fact}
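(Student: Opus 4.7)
The plan is to bootstrap using the three previous facts: first promote $u$ to a function in $L^p_{loc}$ for large $p$, then represent it locally as a Newtonian potential plus a harmonic function, and finally read off $C^1$ regularity.

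First, I would fix an arbitrary bounded open subdomain $\Omega'$ with $\overline{\Omega'} \subset \Omega$; it suffices to show $u \in C^1(\Omega')$. By Fact \ref{fact3}, $u \in L^p_{loc}(\Omega)$ for every $p \geq 1$, so in particular $u \in L^p(\Omega')$ for every $p \geq 1$. Since $V \in L^\infty(\Omega)$, the product $Vu$ lies in $L^p(\Omega')$ for every $p \geq 1$ as well. Extend $Vu$ by zero outside $\Omega'$ and set
\[
 f = E * (Vu).
\]
Fact \ref{fact2} gives $\Delta f = Vu$ in $\Omega'$ in the sense of distributions, and Fact \ref{fact4} (applied with any exponent $p > 2$, which is legitimate since $Vu \in L^p(\Omega')$) gives $f \in C^1(\Omega')$.

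Next I would compare $u$ with $-f$. The distributional identity $\Delta u = -Vu$ combined with $\Delta f = Vu$ yields
\[
 \Delta(u + f) = -Vu + Vu = 0
\]
in $\Omega'$ in the sense of distributions. Since $u + f \in L^1_{loc}(\Omega')$, Weyl's lemma applies and shows that $u + f$ agrees almost everywhere with a harmonic (hence $C^\infty$) function $w$ on $\Omega'$. Therefore $u = w - f$ almost everywhere on $\Omega'$, and redefining $u$ on a null set if necessary, we obtain $u \in C^1(\Omega')$ as the sum of a $C^\infty$ function and a $C^1$ function. Since $\Omega'$ was arbitrary, $u \in C^1(\Omega)$.

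There is no genuine obstacle here: all the work was already done in Facts \ref{fact2}--\ref{fact4}. The one mild subtlety worth checking carefully is that Fact \ref{fact4} requires the density to lie in $L^p$ with $p > 2$ on a bounded set, which is precisely why one first needs the $L^p_{loc}$ upgrade of Fact \ref{fact3} rather than settling for the raw hypothesis $u \in L^1_{loc}$. The use of Weyl's lemma at the end is also standard and converts the distributional harmonicity of $u+f$ into pointwise smoothness.
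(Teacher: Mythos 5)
Your proof is correct and follows essentially the same route as the paper: upgrade $u$ to $L^p_{loc}$ via Fact~\ref{fact3}, set $f=E*(Vu)$, use Fact~\ref{fact4} to get $f\in C^1$, and observe that $u$ differs from $\mp f$ by a harmonic function. (As a small aside, you even get the sign right: with $\Delta f = Vu$ and $\Delta u = -Vu$, it is $u+f$ that is harmonic, whereas the paper writes $u-f$; the conclusion is unaffected either way.)
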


\begin{proof}
By Fact \ref{fact3}, $u \in L^p_{loc}(\Omega)$ with $p > 2$. 
Again passing to a subdomain, if necessary, we may assume that $\Omega$ is bounded and
$u \in L^p(\Omega)$. Consider  $f= E*(Vu)$. Since $Vu \in L^p(\Omega)$, $f \in C^1(\Omega)$.
However $u-f$ is harmonic. Hence $u \in C^1(\Omega)$.  
\end{proof}

\begin{lemma} \label{lem: solving}
Let $\Omega$ be a bounded domain with Poincare constant smaller than 1.
Then for any $v\in L^\infty(\Omega)$, we can find a solution $u \in W^{1,2}_0(\Omega)$ to 
$\Delta u = v$ in the sense of distributions
such that
$$\|u\|_{W^{1,2}_0(\Omega)} \leq 4 \|v\|_2.$$

\end{lemma}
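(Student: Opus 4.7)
The plan is to set this up as a direct application of the Riesz representation theorem on the Hilbert space $W_0^{1,2}(\Omega)$, exactly in the spirit of Lax--Milgram for the Dirichlet problem, and then squeeze out the explicit constant $4$ using the Poincar\'e hypothesis.

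First I would equip $W_0^{1,2}(\Omega)$ with the Dirichlet inner product
\[
\langle u, \varphi \rangle := \int_\Omega \nabla u \cdot \nabla \varphi,
\]
and observe that, because the Poincar\'e constant of $\Omega$ is strictly less than $1$, we have $\|u\|_2 \le \|\nabla u\|_2$ for every $u \in W_0^{1,2}(\Omega)$. Hence the Dirichlet inner product is equivalent to the standard $W^{1,2}$ inner product, and $W_0^{1,2}(\Omega)$ is a Hilbert space under $\langle \cdot, \cdot \rangle$.

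Next I would consider the linear functional $L : W_0^{1,2}(\Omega) \to \mathbb{R}$ defined by $L(\varphi) = -\int_\Omega v\, \varphi$. Continuity on $\bigl(W_0^{1,2}(\Omega), \langle\cdot,\cdot\rangle\bigr)$ follows from Cauchy--Schwarz and Poincar\'e:
\[
|L(\varphi)| \le \|v\|_2 \|\varphi\|_2 \le \|v\|_2 \|\nabla \varphi\|_2.
\]
(Here $v \in L^\infty(\Omega) \subset L^2(\Omega)$ because $\Omega$ is bounded.) By the Riesz representation theorem there exists a unique $u \in W_0^{1,2}(\Omega)$ with
\[
\int_\Omega \nabla u \cdot \nabla \varphi = -\int_\Omega v\, \varphi \qquad \text{for all } \varphi \in W_0^{1,2}(\Omega),
\]
which is precisely the weak formulation of $\Delta u = v$. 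Testing against $\varphi \in C_0^\infty(\Omega)$ gives the statement in the sense of distributions.

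For the norm bound, substituting $\varphi = u$ in the identity above and applying Cauchy--Schwarz plus Poincar\'e yields $\|\nabla u\|_2^2 \le \|v\|_2 \|u\|_2 \le \|v\|_2 \|\nabla u\|_2$, so $\|\nabla u\|_2 \le \|v\|_2$, and then $\|u\|_2 \le \|\nabla u\|_2 \le \|v\|_2$ by Poincar\'e. Using the standard norm $\|u\|_{W_0^{1,2}(\Omega)}^2 = \|u\|_2^2 + \|\nabla u\|_2^2$, this gives $\|u\|_{W_0^{1,2}(\Omega)} \le \sqrt{2}\,\|v\|_2 \le 4 \|v\|_2$. There is no genuine obstacle here: the only thing to watch is to keep the Poincar\'e constant in play both to make the Dirichlet form an equivalent inner product (so Riesz applies and the existence step is clean) and to convert the resulting $\|\nabla u\|_2$ bound into a bound on the full $W_0^{1,2}$ norm with the claimed constant.
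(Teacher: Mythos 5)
Your proof is correct. The underlying idea—solve the Dirichlet problem variationally on the Hilbert space $W_0^{1,2}(\Omega)$—is the same as in the paper, but you package it as a direct application of the Riesz representation theorem (Lax--Milgram in disguise), whereas the paper builds the minimizer by hand: it introduces the functional $\Phi(u)=\int|\nabla u|^2+\int vu$, shows $\Phi$ is bounded below, proves by strict convexity (the parallelogram-type identity) that any minimizing sequence is Cauchy in $W_0^{1,2}$, passes to the limit, and then differentiates along test directions to obtain $\Delta u = v/2$, finally rescaling to get $\Delta u = v$ with $\|u\|_{W_0^{1,2}}\le 4\|v\|_2$. Your route is shorter and outsources the existence step to a quoted theorem; in fact it yields the slightly sharper bound $\|u\|_{W_0^{1,2}}\le\sqrt{2}\,\|v\|_2$, since you solve $\Delta u = v$ directly rather than $\Delta u = v/2$ and then doubling. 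The paper's route is self-contained (it essentially reproves the relevant special case of Riesz in-line), which fits its expository style of keeping the appendix elementary. Both correctly use the Poincar\'e hypothesis twice: once to make the Dirichlet form coercive (equivalently, to turn it into an inner product equivalent to the $W^{1,2}$ norm) and once to upgrade the gradient bound to a full $W_0^{1,2}$ bound.
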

\begin{remark} Note that if  $u \in W^{1,2}_{loc}(\Omega)$ and $h \in C^\infty_0(\Omega)$, then
$$ \int_\Omega \nabla u \nabla h = -\int_\Omega u \Delta h.$$
So $u \in W^{1,2}_{loc}(\Omega)$ is a solution to $\Delta u = v$ in the sense of distributions if and only if 
$$ \int_\Omega \nabla u \nabla h = -\int_\Omega v  h $$
for any $h \in C^\infty_0(\Omega)$.
\end{remark}

\begin{proof}

Consider the functional 
$$ \Phi(u)= \int |\nabla u|^2 + \int vu$$ 
for $u \in W_{0}^{1,2}(\Omega)$.  Integrals in the next few lines will be over the domain $\Omega$. Notice that by Poincare's inequality

$$\Phi(u) \geq \frac{1}{2} \int |\nabla u|^2 + \frac{1}{2} \int |u|^2 - \|v\|_2\|u\|_2 \geq$$
$$\geq \frac{1}{2} \|u\|^2_{W_{0}^{1,2}} - \|v\|_2\|u \|_{W_{0}^{1,2}} \geq -\frac{1}{2}\|v\|_2. $$

 Thus $\Phi(u)$ is bounded from below. Note that $\Phi(u)>0=\Phi(0)$
 as soon as $\|u\|_{W_{0}^{1,2}} > 2\|v\|_2.$
 Let now $u_k\in C_0^{\infty}(\Omega)$ be any minimizing sequence for $\Phi$. Note  that 
 
 $$ \frac{\Phi(u')+ \Phi(u'')}{2} - \Phi\left(\frac{u'+u''}{2}\right) = \frac{1}{4} \int |\nabla(u'-u'')|^2 \geq \frac{1}{8} \|u'- u''\|^2_{W_0^{1,2}}.$$
 
 Hence $u_k$ is a Cauchy sequence in $W_0^{1,2}(\Omega)$, so the limit $u=\lim u_k$ exists and minimizes $\Phi$. 
 Now, take any test function $h \in C_0^\infty(\Omega)$ 
 and consider 
 $$ \Phi(u+th)= \Phi(u) + t \left(2\int \nabla u \nabla h + \int vh \right)
 +t^2 \int |\nabla h|^2.$$
 Since $u$ is a minimizer, we must have 
 $$2\int \nabla u \nabla h + \int vh=0,$$ i.e., 
 $\Delta u = v/2$ in the sense of distributions. 
 Taking $2u$ in place of $u$ we get a solution to $\Delta u = v$  with 
 $$\|u\|_{W_0^{1,2} } \leq 4 \|v\|_2.$$
 \end{proof}

 The next step is to 
 show that $$\| u\|_\infty \leq C\| v\|_\infty$$
 with some absolute constant $C>0$.
 WLOG, we will assume $|v|\leq 1$.

 \subsection{Uniform bound via Di Giorgi method}
Let $\Omega$ be any bounded open set in $\mathbb{R}^2$ with
Poincare constant $k^{2} \leq k_{0}^{2},$ i.e.,
\[
\int u^{2} \leq k^{2} \int|\nabla u|^{2} \text { for all } u \in C_{0}^{\infty}(\Omega).
\]
\textbf{Claim I}: Let $k_0$ be sufficiently small and consider any smooth
 $\varepsilon-$minimizer of
\[
\Phi(u)=\int_{\Omega}|\nabla u|^{2}+\int_{\Omega} vu \quad\left(\|v\|_{\infty} \leq 1\right)
, \text{ i.e.}, \]
$u\in C^\infty_0(\Omega)$ and for any  $\tilde u\in C^\infty_0(\Omega)$, $\Phi(\tilde u) \geq \Phi(u) -\varepsilon$.
Then $u$ satisfies $$\int_{B\cap \Omega} u^{2} \leq Ck^2(k^2+\varepsilon)$$
 for every unit ball $B \subset \mathbb{R}^{2}$.

\begin{proof}

 Let $\varphi(x)$ be a smooth positive radial function such that 
 \begin{itemize}
 \item $\varphi(x)=1$ in $B(0,1)$, 
 \item $\varphi(x) \in (0,1]$,
 \item
 $\varphi(x) \asymp e^{-|x|}$, 
 \item $|\nabla \varphi| \leq  \varphi$.
  \end{itemize}

 Let $ \psi=\varphi^{2}, \text { so }|\nabla \psi| \leqslant 2|\psi|$.
 Applying the Poincare inequality to $\varphi u$, we get
$$
 \int \varphi^{2} u^{2}\leq k^{2} \int|\varphi \nabla u+u \nabla \varphi|^{2}
\leq 2 k^{2}\left(\int \varphi^{2}|\nabla u|^{2}+\int u^{2}|\nabla \varphi|^{2}\right) \leq $$
 $$\leq 2k^{2} \int \varphi^{2}|\nabla u|^{2}+2 k^{2} \int u^{2} \varphi^{2}, \text { whence } $$
 $$ \int \varphi^2 u^2 \leq \frac{2 k^{2}}{1-2 k^{2}} \int \varphi^2|\nabla u|^{2}, \text{ i.e.}, $$
 \begin{equation} \label{eq: *}
  \int u^{2} \psi \leq \frac{2 k^{2}}{1-2 k^{2}} \int |\nabla u|^{2} \psi \leq 4 k^{2} \int |\nabla u|^{2} \psi \quad \text{ if } \quad k_{0} \leq \frac{1}{2}.
 \end{equation}

Now, consider the competitor $\tilde{u}=(1-\psi) u$.
We have
\[
\quad \Phi(\tilde{u})=\int|(1-\psi) \nabla u-u \nabla \psi|^{2}+\int v(1-\psi) u\leq \]
\[\leq \int(1-\psi)^{2}|\nabla u|^{2}+2 \int|\nabla u| | u||\nabla \psi|+\int u^{2}|\nabla \psi|^{2}+\int v(1-\psi) u
\]
$$\leq \int(\nabla u)^{2}+\int v u-\int |\nabla u|^{2}\psi+4 \int |\nabla u| | u| \psi +4 \int u^{2} \psi-\int v \psi u$$
(we used the inequalities $(1-\psi)^{2} \leq 1-\psi, |\nabla \psi| \leqslant 2 \psi, \psi^{2} \leq \psi$).

Since $\Phi(\tilde{u}) \geqslant \Phi(u)-\varepsilon,$ we must have
\[
\int|\nabla u|^{2} \psi \leq 4\left(\int|\nabla u| |u| \psi+\int u^{2} \psi\right)+\varepsilon+\int | v \psi u |.
\]
However 
\[
 \int u^{2} \psi \leqslant 4 k^{2} \int |\nabla u|^{2} \psi \quad \text { by \eqref{eq: *}, }
\]
and
$$\int|\nabla u| |u| \psi \leq \sqrt{\int u^{2} \psi} \sqrt{\int | \nabla u|^{2} \psi} \leqslant 2 k \int|\nabla u|^{2} \psi.$$
So for sufficiently small $k_0$,
$$ 4\left(\int|\nabla u| |u| \psi+\int u^{2} \psi\right) \leq \frac{1}{2} \int|\nabla u|^{2} \psi.$$
Hence
\[
\int|\nabla u|^{2} \psi\leq 2\varepsilon+ 2\int|v  u \psi |
\]
\[
\leq 2\left(\varepsilon+\sqrt{\int \psi} \sqrt{\int u^{2} \psi} \, \, \right) \leqslant C\left(\varepsilon+ k \cdot \sqrt{\int |\nabla u|^{2} \psi} \, \,\right).
\]
If the first term dominates, then
$\int|\nabla u|^{2} \psi \leqslant C \varepsilon $. Otherwise $ \int|\nabla u|^{2} \psi \leqslant C k^{2}$.

By \eqref{eq: *} it follows  that $$\int u^{2} \psi \leq C k^{2}\left(k^{2}+\varepsilon\right).$$

\end{proof}
Note that we did not care in Claim I where the Poincare constant came from and what was special about the geometry of $\Omega$ that made it small. The next lemma gives a simple bound for the Poincare constant of ``thin" domains.

\begin{lemma}\label{le: thin}
Assume $\Omega$ is open and $m_2(\Omega \cap Q)\leq c < 1$ for all unit squares $Q\subset \mathbb{R}^2$. Then the Poincare constant of $\Omega$ is at most $2 + \frac{2}{1-c}$.
\end{lemma}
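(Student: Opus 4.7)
The plan is to tile the plane by closed unit squares $\{Q_i\}_{i\in\Z^2}$ with pairwise disjoint interiors and reduce the global Poincar\'e inequality to a square-by-square estimate, exploiting the fact that $u$ (extended by $0$ from $\Omega$) vanishes on a set of measure at least $1-c$ inside every $Q_i$.

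First, I would fix such a tiling and, on each square $Q_i$, write $\bar u_i := \int_{Q_i} u$ for the mean of $u$ over $Q_i$ (recall $|Q_i|=1$). The key input is the standard Neumann--Poincar\'e (Wirtinger) inequality on the unit square, whose sharp constant is $1/\pi^2<1$, giving
$$\int_{Q_i}(u-\bar u_i)^2 \,\leq\, \int_{Q_i}|\nabla u|^2.$$

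Next, I would exploit the large zero set $E_i:=Q_i\setminus \Omega$, which has $|E_i|\geq 1-c$, to control the constant piece $\bar u_i$: since $u-\bar u_i=-\bar u_i$ identically on $E_i$,
$$(1-c)\,\bar u_i^{\,2} \,\leq\, |E_i|\,\bar u_i^{\,2} \,=\, \int_{E_i}(u-\bar u_i)^2 \,\leq\, \int_{Q_i}(u-\bar u_i)^2 \,\leq\, \int_{Q_i}|\nabla u|^2.$$
Combining this with the elementary splitting $\int_{Q_i} u^2 \leq 2\int_{Q_i}(u-\bar u_i)^2 + 2\bar u_i^{\,2}|Q_i|$ gives the local bound
$$\int_{Q_i} u^2 \,\leq\, \left(2+\frac{2}{1-c}\right)\int_{Q_i}|\nabla u|^2,$$
and summing over $i\in\Z^2$ (with $u\equiv 0$ outside $\Omega$) yields the claim.

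The only mild obstacle is supplying the Wirtinger constant $\leq 1$ on the unit square. The sharp value $1/\pi^2$ is classical, but if one prefers to avoid citing it, the same bound is obtained by slicing: applying the $1$D Wirtinger inequality $\int_0^1 v^2 \leq \frac{1}{\pi^2}\int_0^1 (v')^2$ on horizontal lines to $u-\bar u_i^{\,\text{row}}(y)$, and then on the resulting row averages in the vertical direction, produces the needed two-dimensional estimate with a constant well under $1$. No other step is subtle.
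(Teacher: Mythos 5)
Your proof is correct, and it is genuinely different from the paper's. Both arguments tile the plane by unit squares and reduce to a per-square inequality, but the paper's per-square estimate is built by hand from one-dimensional calculus: using that $m_2(\Omega\cap Q)\le c$, the set of vertical lines $I_x$ that contain a zero of $f$ has measure $\ge 1-c$, so one can find a good vertical line $I_{x_0}$ with $\int_{I_{x_0}} f^2 \le \frac{1}{1-c}\int_Q|\nabla f|^2$, and then transport this estimate along each horizontal line via $\int_I f^2 \le 2|f(z)|^2 + 2\int_I|\nabla f|^2$. You instead invoke the Neumann--Poincar\'e (Wirtinger) inequality $\int_Q (u-\bar u)^2 \le \pi^{-2}\int_Q|\nabla u|^2$ as a black box and then use the large zero set $E=Q\setminus\Omega$ to control the constant term, since $u-\bar u = -\bar u$ on $E$ forces $(1-c)\bar u^2 \le \int_Q(u-\bar u)^2$. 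The two proofs yield the same constant $2+\frac{2}{1-c}$. Your route is shorter and arguably more transparent once Wirtinger is granted, and the ``large zero set controls the mean'' step is conceptually robust; the paper's route avoids any appeal to the sharp Neumann eigenvalue and is fully self-contained, which fits the paper's appendix style of proving elementary facts from scratch. Both are valid. Your fallback slicing derivation of a crude two-dimensional Wirtinger constant (giving $2/\pi^2<1$) is also fine and would make your proof self-contained as well.
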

\begin{proof}

Let $f \in C_0^{\infty}(\Omega)$. 
Extend $f$ by zero outside $\Omega$. It is sufficient to show that
if $Q$ is a unit square, then 
$$ \int_Q | f|^2 \leq \left(2 + \frac{2}{1-c}\right)\int_Q | \nabla f|^2.$$
By tiling the plane with unit squares, it implies
$$ \int_\Omega | f|^2 \leq \left(2 + \frac{2}{1-c}\right) \int_\Omega | \nabla f|^2.$$
Let $Q=[0,1]^2$,
 $$I_x=((x,y): y\in [0,1]) \quad \text{ and } \quad I^y=((x,y): x\in [0,1]).$$
Let $X_0$ be the set of $x\in [0,1]$ such that $I_x$ contains a zero of $f$. Then 
for every $x\in X_0$, we have
$$\max_{I_x} |f| \leq \int_{I_x} |\nabla f|$$
and
$$\int_{I_x} f^2 \leq \max_{I_x} f^2 \leq \left(\int_{I_x} |\nabla f|\right)^2 \leq \int_{I_x} |\nabla f|^2.$$
Hence 
$$\int_{X_0 \times [0,1]} f^2 \leq \int_Q |\nabla f|^2.$$
The set $X_0$ has Lebesgue measure at least $1-c$, whence
there is $x_0 \in X_0$ such that 
$$ \int_{I_{x_0}} f^2 \leq \frac{1}{1-c} \int_Q |\nabla f|^2. $$
\textbf{Claim.} Let $I$ be a unit interval and let $z$ be any point in $I$.
Then $$\int_I f^2 \leq 2|f(z)|^2 + 2 \int_I |\nabla f|^2.$$
Indeed,
$$\int_I f^2 \leq \max_I f^2 \leq \left(|f(z)| + \int_I |\nabla f|\right)^2 \leq \left(|f(z)| + \sqrt{\int_I |\nabla f|^2}\,\,\right)^2 \leq $$ 
$$\leq 2|f(z)|^2 + 2\int_I |\nabla f|^2.$$
For every $y \in [0,1]$, it yields
$$\int_{I^y} f^2 \leq 2|f(x_0,y)|^2 + 2 \int_{I^y} |\nabla f|^2.$$
Thus 
$$\int_Q f^2 = \int_0^1 \left(\int_{I^y} f^2dx\right)dy \leq 2\int_{I_{x_0}} f^2+2 \int_{Q} |\nabla f|^2 \leq \left(\frac{2}{1-c}+2\right)\int_{Q} |\nabla f|^2.$$
  
\end{proof}
\begin{corollary}\label{cor: thin}
If $$m_2(\Omega \cap Q)\leq k^2 \ll 1$$ 
for any unit square $Q$, then 
the Poincare consant of $\Omega$ is smaller than $Ck^2$. 
\end{corollary}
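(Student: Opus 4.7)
The plan is to reduce the Corollary to Lemma~\ref{le: thin} via a simple rescaling. The hypothesis of Lemma~\ref{le: thin} asks for density strictly less than $1$ in unit squares, while our hypothesis provides density at most $k^2\ll 1$. Dilating $\Omega$ by a factor comparable to $1/k$ will swap these two scales, and then the gain of $k^2$ in the Poincar\'e constant will come out automatically from how the Dirichlet quotient scales.

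Concretely, I would set $\tilde\Omega := \frac{1}{2k}\,\Omega$ and first check that $\tilde\Omega$ satisfies the hypothesis of Lemma~\ref{le: thin} with $c=1/4$. Any axis-aligned unit square $\tilde Q$ in the rescaled plane corresponds to an axis-aligned square $Q=2k\tilde Q$ of side $2k$ in the original plane; since $k\ll 1$ we have $2k\leq 1$, so $Q$ can be enclosed in some axis-aligned unit square $Q_1$, and the assumption gives $m_2(\Omega\cap Q)\leq m_2(\Omega\cap Q_1)\leq k^2$, whence
\[
m_2(\tilde\Omega\cap \tilde Q) \;=\; \frac{1}{(2k)^2}\,m_2(\Omega\cap Q) \;\leq\; \frac{1}{4}.
\]
Lemma~\ref{le: thin} then produces an absolute constant $C_0$ bounding the Poincar\'e constant of $\tilde\Omega$.

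To pass back to $\Omega$, for $f\in C_0^\infty(\Omega)$ I put $\tilde f(x):=f(2kx)\in C_0^\infty(\tilde\Omega)$. The change of variables $y=2kx$ gives $\int \tilde f^2 = (2k)^{-2}\int f^2$ while $\int|\nabla\tilde f|^2 = \int|\nabla f|^2$, so applying the Poincar\'e inequality of $\tilde\Omega$ to $\tilde f$ yields
\[
\int_\Omega f^2 \;\leq\; 4C_0\,k^2\int_\Omega|\nabla f|^2,
\]
which is the claim with $C=4C_0$. There is no real obstacle; the only point that needs any thought is to choose the rescaling factor \emph{strictly} larger than $k$ (here $2k$), so that the density of $\tilde\Omega$ in rescaled unit squares is bounded by a constant strictly below $1$, as Lemma~\ref{le: thin} requires. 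Any factor $\alpha k$ with $\alpha>1$ fixed works equally well and only changes the value of the absolute constant $C$.
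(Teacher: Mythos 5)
Your proof is correct and is essentially the same as the paper's: both reduce to Lemma~\ref{le: thin} by observing that squares of side $2k$ (contained in unit squares) have at most a quarter of their area in $\Omega$, and then rescale by a factor of $2k$. The paper's proof is a two-line sketch of exactly the computation you spelled out.
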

\begin{proof}
For every square $Q_{2k}$ of size $2k$, we have
$$m_2(\Omega \cap Q_{2k})\leq \frac{1}{4} m_2( Q_{2k}).$$
By  $2k$ rescaling we reduce the problem to Lemma \ref{le: thin}.

\end{proof}
\pagebreak
Now we are almost ready to run the Di Georgi scheme. The only remaining preparatory part is smooth surgery.
Let $u\in C_0^\infty(\Omega)$. Fix $t>0$ (level) and $\delta>0 $ (extremely small number).
Let $\Theta$ be a $C^\infty$-smooth function on $\mathbb{R}$ described by Figure \ref{Theta}.

 \begin{figure}[h!]
   \includegraphics[width=0.7\textwidth]{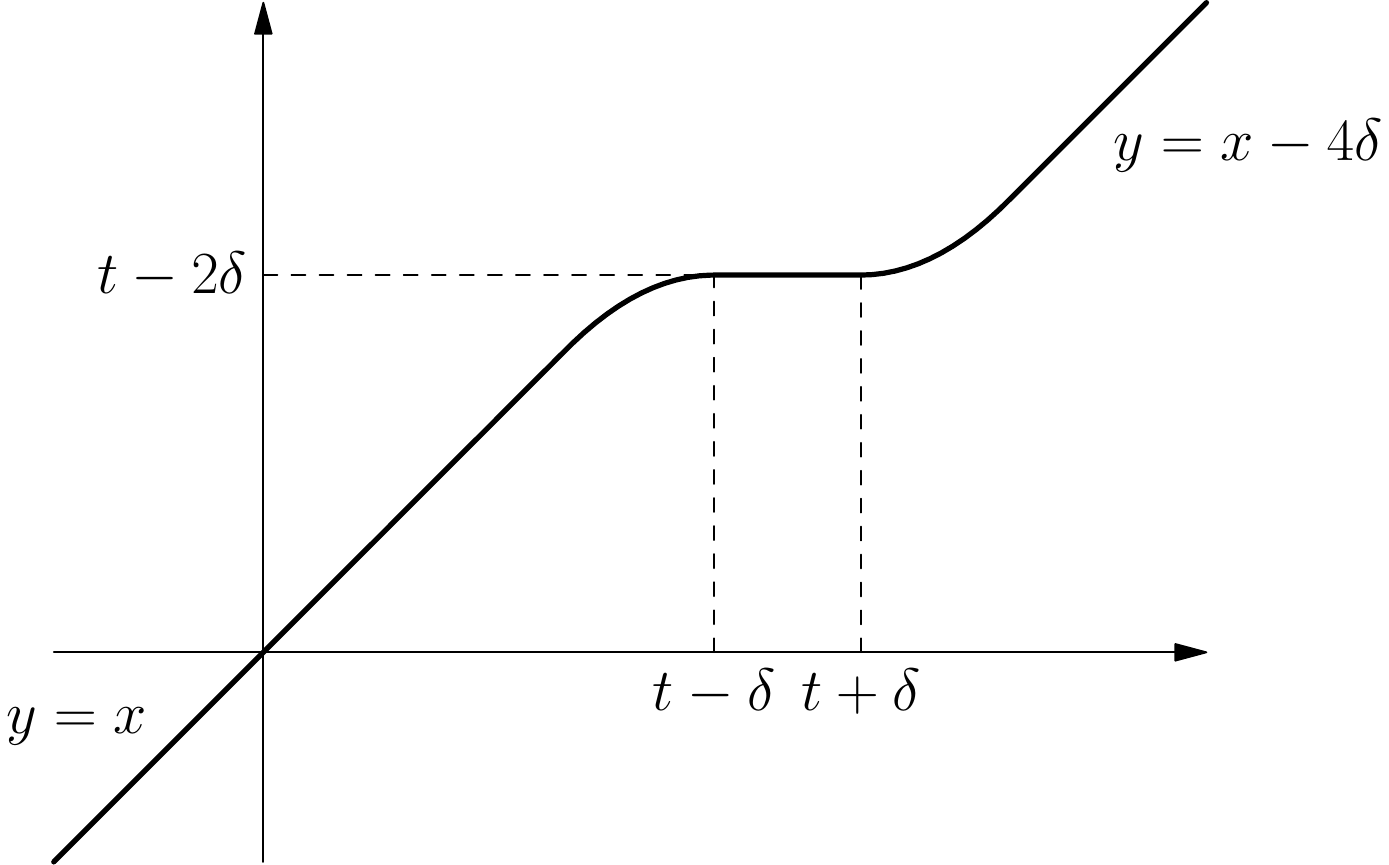}
   \caption{ $y=\Theta(x)$} \label{Theta}
\end{figure}

 The function $\Theta$ has the following properties:
 \begin{itemize}
\item $ 0 \leq  \Theta' \leq 1,$
\item $ \Theta(0)= 0,$
\item $ x-4\delta\leq \Theta(x) \leq x,$
\item $\Theta(x) = t-2\delta \text{ on } (t -\delta, t+\delta).$
 
 \end{itemize}
Let $\tilde u = \Theta \circ u$. Then $|u-\tilde u| \leq 4\delta$ and 
$|\nabla \tilde u| \leq |\nabla u|$ pointwise. Thus, if $u$ was an $\varepsilon$-minimizer,
then $\tilde u$ is an $\varepsilon+ A\delta$-minimizer ($\delta$ is purely qualitative and $A=4\int_\Omega |v|$).
Define $$\Theta_-(x)=
\begin{cases}
\Theta(x), \quad x\leq t+\delta\\
t-2\delta, \quad x\geq t+\delta
\end{cases} \quad \text{and} \quad \Theta_+=\Theta-\Theta_-.  $$
The function $\tilde u$ naturally splits into two smooth compactly supported terms:
$$ \tilde u= \tilde u_-+ \tilde u_+, \text{ where } \tilde u_\pm = \Theta_\pm \circ u. $$
The function $\tilde u_+$ is compactly supported in $\{u>t\}$ ($\text{supp } \tilde u_+ \subset\{u \geq t+\delta \}$)
and $\nabla \tilde u_+$ and $\nabla \tilde u_-$ have disjoint supports.
We may then try to replace $\tilde u_+$ by some smooth competitor $w\in C_0^\infty(\{u>t\})$ and see if the functional 
can drop. Note that 
$$ \Phi(\tilde u_-+w)= \int_\Omega|\nabla \tilde u_-|^2 + \int_{\{u>t\}}|\nabla w|^2 + \int_\Omega v\tilde u_- + \int_{\{u>t\}} v w,$$
so we just need to compare 
$$\int_\Omega|\nabla \tilde u_+|^2  + \int_\Omega v\tilde u_+ \quad \text{with} \quad \int_{\{u>t\}}|\nabla w|^2 + \int_{\{u>t\}} v w.$$
Hence $ \tilde u_+$ is an $(\varepsilon+A\delta)$-minimizer in the new domain $\{u> t \}$.
We shall now fix the initial Poincare constant to be $k_0$ from Claim I. If $u$ is an $\varepsilon$-minimizer, then
$ \int_B u^2 \leq Ck_0^2(k_0^2+\varepsilon)$ for every unit ball $B$, so 
$$m_2(\{u>t_0\} \cap B)\leq \frac{Ck_0^2(k_0^2+\varepsilon)}{t_0^2}.$$

Choose $t_{0}=C' \sqrt{k_{0}}$ with sufficiently large absolute constant $C'$. Then the domain
$\Omega_{1}=\left\{u>t_{0}\right\}$ satisfies $m_2(\Omega_1\cap B) \leq \frac{C}{C'} k_{0}\left(k_{0}^{2}+\varepsilon\right)$ for any unit ball $B$ and, by Corollary \ref{cor: thin},
the Poincare constant of $\Omega_1$ is at most $k^{2}_1:=k_{0} \frac{k_{0}^{2}+\varepsilon}{2}$.
Also $u_{1}=\tilde{u}_{+} \in C_{0}^{\infty}\left(\Omega_{1}\right)$ will be an $\varepsilon_{1}=\varepsilon+A\delta_{0}$-minimizer of $\Phi$ 
 in $C_{0}^{\infty}\left(\Omega_{1}\right)$ where $\delta_{0}>0$ can be chosen arbitrary small. Finally, note
that $u \leq t_{0}+u_{1}+4\delta_{0}$ everywhere in $\Omega$. We can now repeat this construction with $ u_{1}, \Omega_{1}, \varepsilon_1$ instead of $u, \Omega, \varepsilon$ to
get $u_{2}, \Omega_{2}, \varepsilon_{2}$ and so on. We shall get a sequence of domains $\Omega_j$, functions
$u_j \in C_0^{\infty}(\Omega_j)$ and numbers $k_j,t_j,\varepsilon_j, \delta_j>0$ such that 
$$ \Omega_1 \supset \Omega_2 \supset ... \quad, \quad k_j^2= k_{j-1}\frac{k_{j-1}^2+\varepsilon_{j-1}}{2},$$
$$ \varepsilon_j= \varepsilon_{j-1}+ A\delta_{j-1}, \quad t_j=C'\sqrt{k_j},  $$
 $$m_2(\Omega_j\cap B) \leq \frac{C}{C'}k_{j-1}(k_{j-1}^2 +\varepsilon_{j-1}) \quad \text{ for any unit ball } B,  $$
and 
$$ u \leq t_0+t_1+...+t_{l}+u_{l+1}+4\delta_0+4\delta_1+...+4\delta_{l} \quad \text{for any } l\geq 0.$$

In this construction, we can choose $\delta_j>0$ as  small as we want, so putting $\delta_j=\frac{c\varepsilon}{2^j}$ with sufficiently small absolute constant $c>0$, we can guarantee that all $\varepsilon_j \leq 2 \varepsilon$.

Let $l$ be the first index for which $k_l^2<2\varepsilon$. Then (provided that $C' > 8C$, $\varepsilon <1/2$), we also have 
$m_2(\Omega_{l+1} \cap B)<\frac{\varepsilon}{2}$ for every unit ball $B$. For all $j\leq l$, we have
$$k_j^2 \leq k_{j-1}^3,$$ so, if $k_0$ was chosen less than $\frac{1}{4}$, it implies that $k_j^2 \leq 2^{-j-2}$
for $j=0,1,...,l$, whence $t_j \leq C'2^{-\frac{j}{2}-1}$ and 
$$  u \leq t_0+t_1+...+t_{l}+4\delta_0+4\delta_1+...+4\delta_{l} \leq C' \sum_{j\geq 0} 2^{-\frac{j}{2}-1}+\sum_{j\geq 0} 4\delta_j \leq $$
 $$\leq 2C' +\varepsilon \leq 2C'+1=C_0$$
in $\Omega\setminus \Omega_{l+1}$ because $u_{l+1}=0$ in $\Omega\setminus \Omega_{l+1}$.
Thus
$$m_2(\{ u>C_0\}\cap B)<\varepsilon/2.$$
Considering $-u$ instead of $u$, we conclude that also $m_2(\{ u<-C_0\}\cap B)<\varepsilon/2$ and
therefore
$$m_2(\{ |u|>C_0\}\cap B)<\varepsilon$$
for every $\varepsilon$-minimizer $u$. Since the true minimizer $u$ is the limit
of $\varepsilon$-minimizers in $L^2(\Omega)$, we get 
$$ m_2(\{|u|>C_0\}\cap B)=0 \quad \text{ for any unit ball } B,$$
so $m_2(\{|u|>C_0\})=0$. 

\noindent \textbf{Conclusion.} If $|v|\leq 1$ and the Poincare constant of $\Omega$ is not greater than $k_0^2\ll 1$, then the minimizer
of $\int_\Omega |\nabla u|^2 + \int_\Omega vu$ in $W_0^{1,2}(\Omega)$ satisfies 
\begin{equation}
\|u\|_\infty\leq C_0.
\end{equation}

If the Poincare constant of $\Omega$ is $k$, we put $\widetilde \Omega = \frac{k_0}{k}\Omega$, $\widetilde u = \frac{k_0^2}{k^2}u(\frac{k}{k_0}\cdot)$, $\widetilde v = v(\frac{k}{k_0}\cdot)$, so 
$\widetilde \Phi(\widetilde u)= \int_{\widetilde \Omega} |\nabla \widetilde u |^2+ \int_{\widetilde \Omega} \widetilde v \widetilde u = \frac{k_0^4}{k^4}\Phi(u)$. Applying the result that
 was just obtained, we get the final observation.

\begin{lemma}  \label{lem: solving2}
If the Poincare constant of $\Omega$ is $k>0$,
then the minimizer of $\Phi(u)=\int_\Omega |\nabla u|^2 + \int_\Omega vu$ 
in $W_0^{1,2}(\Omega)$ (i.e., the solution to $\Delta u = v/2$) satisfies:
\begin{equation} \label{eq: uniform}
\|u\|_\infty\leq Ck^2\|v\|_\infty,
\end{equation}
 where $C$ is an absolute positive constant.
\end{lemma}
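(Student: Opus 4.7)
The plan is to deduce the lemma from the $L^\infty$ bound established in the preceding Di Giorgi scheme, which treated the special case when the Poincar\'e constant is a fixed small absolute constant $k_0^2$ and $\|v\|_\infty \leq 1$. Only two reductions are needed: normalization of the potential and an elliptic rescaling of the domain.

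First I would dispose of the dependence on $\|v\|_\infty$. If $v\equiv 0$ the statement is trivial, so assume $\|v\|_\infty>0$ and set $u^* = u/\|v\|_\infty$, $v^* = v/\|v\|_\infty$. Then $u^*$ is the minimizer of $\int_\Omega |\nabla w|^2 + \int_\Omega v^* w$ over $W_0^{1,2}(\Omega)$, and $\|v^*\|_\infty \leq 1$. Hence it suffices to prove $\|u^*\|_\infty \leq C k^2$ whenever $\|v^*\|_\infty \leq 1$.

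Next, I would rescale to bring the Poincar\'e constant down to $k_0^2$. Set $\lambda = k/k_0$, $\widetilde\Omega = \lambda^{-1}\Omega$, and define
\[
\widetilde u(y) = \lambda^{-2}\, u^*(\lambda y), \qquad \widetilde v(y) = v^*(\lambda y).
\]
A direct change of variables shows that for every $\varphi \in C_0^\infty(\widetilde\Omega)$, $\int_{\widetilde\Omega}\widetilde u^2 \leq k_0^2 \int_{\widetilde\Omega}|\nabla\widetilde u|^2$ translates to the original Poincar\'e inequality on $\Omega$, so the Poincar\'e constant of $\widetilde\Omega$ is $k_0^2$. Moreover $\|\widetilde v\|_\infty = \|v^*\|_\infty \leq 1$, and $\widetilde u$ minimizes $\int_{\widetilde\Omega}|\nabla w|^2 + \int_{\widetilde\Omega}\widetilde v w$ on $W_0^{1,2}(\widetilde\Omega)$ because the functional rescales by the single factor $\lambda^{-2}$.

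At this point the Di Giorgi scheme carried out above applies to $\widetilde u$ verbatim: working with smooth $\varepsilon$-minimizers, performing the surgery $\widetilde u \mapsto \Theta\circ\widetilde u$ at levels $t_j = C'\sqrt{k_j}$ with iteratively shrinking Poincar\'e constants $k_j^2$, and using Corollary \ref{cor: thin} to control them, yields the uniform bound $\|\widetilde u\|_\infty \leq C_0$ with an absolute constant. Undoing the rescaling gives $\|u^*\|_\infty = \lambda^2 \|\widetilde u\|_\infty \leq C_0 k^2/k_0^2$, and multiplying by $\|v\|_\infty$ concludes the proof with $C = C_0/k_0^2$. There is essentially no obstacle here since all the analytic difficulty was already absorbed into the Di Giorgi iteration; the only point that requires a moment of care is checking that the scaling sends minimizers of $\Phi$ on $\Omega$ to minimizers of the corresponding functional on $\widetilde\Omega$, which is immediate because $\Phi$ transforms by the overall constant $\lambda^{-2}$.
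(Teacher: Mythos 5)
Your proposal is correct and follows essentially the same route as the paper: run the Di Giorgi scheme to get the absolute bound $\|\widetilde u\|_\infty \leq C_0$ in the case $\|v\|_\infty \leq 1$, Poincar\'e constant $\leq k_0^2$, then reduce the general case to it by normalizing $v$ and performing the elliptic rescaling $\widetilde\Omega = (k_0/k)\Omega$, $\widetilde u(y) = (k_0/k)^2 u(ky/k_0)$, $\widetilde v(y) = v(ky/k_0)$, which is exactly the paper's rescaling. One small slip: the functional transforms by the overall factor $\lambda^{-4} = k_0^4/k^4$, not $\lambda^{-2}$; this does not affect the argument, since any constant positive multiple preserves the set of minimizers, but the exponent you wrote is off.
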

 

\subsection{Other standard facts used in the proof.}

\begin{fact} \label{trivialities 1}
  Let $\Omega$ be a bounded open set, let $u \in W^{1,2}_{0}(\Omega) $ satisfy $\| u\|_\infty \leq 1$. Then there exists a sequence $u_k 
\in C^\infty_0(\Omega)$ with $\|u_k\|_\infty \leq 2$ such that $ u_k \to u, \nabla u_k \to \nabla u $ in $L^2(\Omega)$ and almost everywhere in $\Omega$.
\end{fact}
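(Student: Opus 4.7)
The plan is to approximate $u$ in $W^{1,2}$ by smooth compactly supported functions in the standard way, and then enforce the uniform bound $\|u_k\|_\infty\le 2$ by composing with a smooth Lipschitz truncation of the identity. By the very definition $W^{1,2}_0(\Omega)=\overline{C^\infty_0(\Omega)}^{W^{1,2}}$, there exists a sequence $v_k\in C^\infty_0(\Omega)$ with $v_k\to u$ and $\nabla v_k\to \nabla u$ in $L^2(\Omega)$. Passing to a subsequence (still called $v_k$) we may additionally assume that $v_k\to u$ and $\nabla v_k\to \nabla u$ pointwise almost everywhere in $\Omega$.

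Fix an odd $C^\infty$ function $\chi\colon\mathbb{R}\to\mathbb{R}$ with the properties
\[
\chi(t)=t\text{ for }|t|\le 3/2,\qquad |\chi(t)|\le 2\text{ and }|\chi'(t)|\le 1\text{ for all }t\in\mathbb{R};
\]
such a $\chi$ is obtained by smoothing the obvious piecewise linear template. Set $u_k:=\chi\circ v_k$. Since $\chi(0)=0$ and $v_k$ has compact support in $\Omega$, we have $u_k\in C^\infty_0(\Omega)$, and $\|u_k\|_\infty\le 2$ by construction.

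It remains to check the two convergences. Because $|u|\le 1\le 3/2$ almost everywhere, $\chi(u)=u$ a.e., and using $|\chi'|\le 1$,
\[
|u_k-u|=|\chi(v_k)-\chi(u)|\le |v_k-u|\quad\text{a.e.,}
\]
so $u_k\to u$ in $L^2(\Omega)$ and a.e. For the gradients the chain rule yields $\nabla u_k=\chi'(v_k)\nabla v_k$, so
\[
\nabla u_k-\nabla u=\chi'(v_k)(\nabla v_k-\nabla u)+(\chi'(v_k)-1)\nabla u.
\]
The first summand is pointwise dominated in absolute value by $|\nabla v_k-\nabla u|$ and thus converges to $0$ in $L^2$. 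For the second summand, since $|u|\le 1$ and $v_k\to u$ a.e., for almost every $x$ we eventually have $|v_k(x)|\le 3/2$, so $\chi'(v_k(x))=1$; combined with the bound $|\chi'(v_k)-1|\le 2$ and $|\nabla u|\in L^2$, dominated convergence gives $L^2$ convergence of the second summand as well. The same pointwise reasoning, applied along the chosen subsequence, gives $\nabla u_k\to \nabla u$ a.e.

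The only potentially delicate point is the behaviour of $\chi'(v_k)$ at points where $|u|=1$: a priori $v_k$ could approach such a value from above, keeping $\chi'(v_k)$ strictly less than $1$. This is exactly why we insisted that $\chi(t)=t$ on a strict neighbourhood of $[-1,1]$ rather than merely on $[-1,1]$ itself; with that cushion, a.e.\ convergence $v_k\to u$ automatically forces $\chi'(v_k)\equiv 1$ eventually, circumventing any appeal to the Stampacchia identity $\nabla u=0$ a.e.\ on level sets of a Sobolev function.
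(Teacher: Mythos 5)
Your proof is correct and takes essentially the same approach as the paper: compose the standard $W^{1,2}_0$-approximating sequence with a smooth truncation that agrees with the identity on a neighborhood of $[-1,1]$. The only difference is organizational — you pass to an a.e.-convergent subsequence first and then invoke dominated convergence, whereas the paper establishes the $L^2$ convergence directly via uniform integrability and extracts the a.e.\ subsequence at the end; both variants are valid.
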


\begin{proof}
By the definition of $W^{1,2}_0(\Omega)$, we  can find $\tilde u_k 
 \in C^\infty_0(\Omega) $ with $ \tilde u_k \to u, \nabla \tilde u_k \to \nabla u $ in $L^2$.
 Let $\Theta $ be defined by Figure \ref{Theta2}.
 
  \begin{figure}[h!] 
    \includegraphics[width=0.7\textwidth]{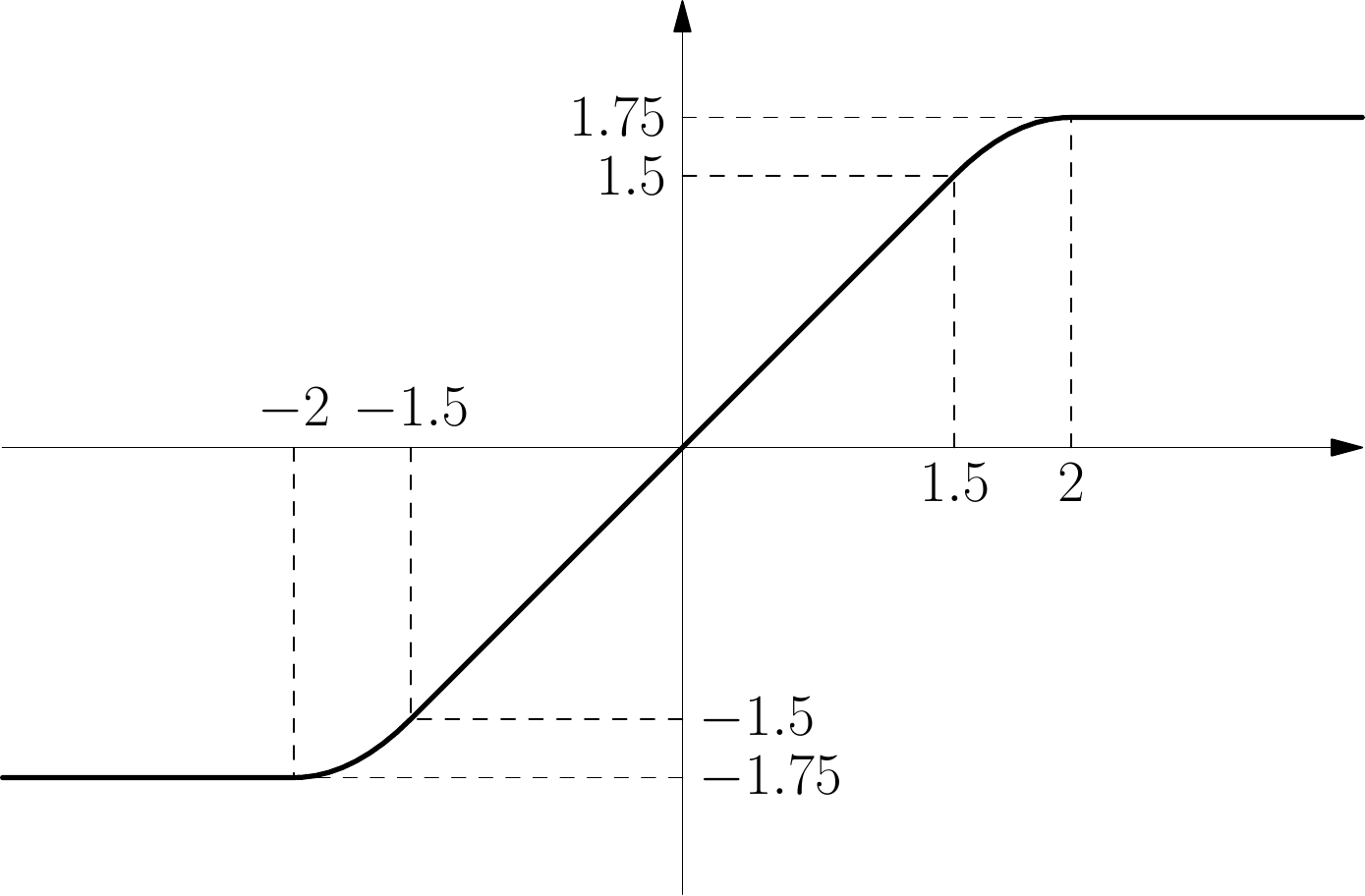}
   \caption{The graph of $\Theta$} \label{Theta2}
\end{figure}
 \pagebreak
 The function $\Theta$ has the following properties.
 \begin{itemize}
 \item $\Theta(x)=x$ for $x\in [-1.5,1.5]$,
 \item $ \Theta $ is $C^\infty$-smooth and $|\Theta| \leq 1.75 $,
 \item $|\Theta'|\leq 1$ and $|\Theta(x)| \leq |x|$.
 \end{itemize}

 Put $u_k = \Theta(\tilde u_k)$. Note that $u_k= \tilde u_k$ and $\nabla u_k = \nabla \tilde u_k$ if $|\tilde u_k|\leq 1.5$ and we always have $|u_k|\leq  |\tilde u_k|$, $|\nabla u_k|\leq  |\nabla \tilde u_k|$. 
 We need to show that $\|u_k-u\|_2$, $\|\nabla u_k- \nabla u\|_2 \to 0$.
 Since $\tilde u_k$ converge in $W^{1,2}_0(\Omega)$, the functions  $|\tilde u_k|^2$ and $|\nabla \tilde u_k|^2$ are uniformly integrable, 
 i.e., for any $\varepsilon>0$, there is $\delta>0$ such that if $m_2(E)<\delta$, then 
 $$\int_E |\tilde u_k|^2, \int_E |\nabla \tilde u_k|^2 < 
 \varepsilon.$$
 In the following computation $\int$ will denote the integral over $\Omega$:
 $$ \int |u_k-u|^2 \leq 2 \left[ \int |u_k-\tilde u_k|^2 +\int |\tilde u_k - u|^2 \right]\leq$$
 
  $$ \leq   4 \!\!\!\!\!\!\int\limits_{\{|\tilde u_k|>1.5\}} (|u_k|^2+|\tilde u_k|^2) + 2\int |\tilde u_k - u|^2 \leq$$
  
  $$\leq  8\!\!\!\!\!\!\int\limits_{\{|\tilde u_k|>1.5\}} |\tilde u_k|^2 + 2 \int|\tilde u_k - u|^2.$$ 
  The second term tends to zero by the choice of $\tilde u_k$.
  Note that $$m_2(\{|\tilde u_k|>1.5\}) \leq m_2(|u-\tilde u_k|\geq 0.5) \leq 4 \|u-\tilde u_k\|_2^2 \to 0.$$ So the first term tends to zero by the uniform integrability property. In a similar way one can show that
  $\int|\nabla u_k - \nabla u|^2 \to 0 $.

  Now we would like to choose a subsequence such that
  $ u_k \to u, \nabla u_k \to \nabla u $  almost everywhere in $\Omega$.
  It can be done by choosing any subsequence $u_{k_j}$ with $$\|u_{k_j}-u\|_2 \leq \frac{1}{4^j}, \quad
  \|\nabla u_{k_j}- \nabla u\|_2 \leq \frac{1}{4^j}.$$
  Let $E_j$ be the set of points $x\in \Omega$ such that $|u_{k_j}(x)-u(x)| \geq \frac{1}{2^{j}}$.
  Then $$  \sqrt{m_2(E_j) \frac{1}{4^{j}} } \leq \|u_{k_j}-u\|_2 \leq \frac{1}{4^{j}}, \quad {\text{so}} \quad  m_2(E_j) \leq \frac{1}{4^j}.$$ Note that if $x \notin \cup_{j=n}^\infty E_j$, then $u_{k_j}(x)$ converge to $u(x)$.
  However  $$m_2(\cup_{j=n}^\infty E_j) \leq \sum_{j=n}^\infty m_2( E_j) \leq \frac{1}{2^{n}}.$$
  Thus  $u_{k_j}$ converge to $u$ almost everywhere in $\Omega$. In a similar way one can show that $\nabla u_{k_j}$ also converge to $\nabla u$ almost everywhere.

  \end{proof}
  
  \noindent \textbf{Fact \ref{product}.}
  Let $\Omega$ be an open set. Assume that $u,v \in W^{1,2}_{loc}(
\Omega)\cap L^{{}^{\scriptsize \infty}}_{loc}(\Omega)$. then $uv \in W^{1,2}_{loc}(
\Omega)  $ and $\nabla(uv) = u \nabla  v + v \nabla u$.

\begin{proof}
Clearly, the fact is local. So we may assume $\Omega= B(0,r)$ and $u,v \in W^{1,2}(B(0,r)
)\cap L^\infty(B(0,r))$.

Let us fix a small $\delta>0$ and
let $$K_\varepsilon(z)=\frac{1}{\varepsilon^2}\kappa(|z|/\varepsilon)$$ be a $C^\infty$-approximation to identity with $\text{supp} K_\varepsilon(z)\subset B(0,\varepsilon)$, $\varepsilon < \delta$.
Then $K_\varepsilon* u$ and $K_\varepsilon*v$
converge to $u$ and $v$ in $L^2(B(0,r-\delta))$ and a.e. in $B(0,r-\delta)$ as $\varepsilon \to 0$. Consider any test function $\eta \in C_0^\infty(B(0,r-\delta))$ and extend it by $0$ outside $B(0,r-\delta)$. Then $\eta*K_\varepsilon \in C_0^\infty(B(0,r))$ and $K_\varepsilon * \nabla \eta = \nabla (K_\varepsilon*\eta) $.
By Fubini's theorem  we have
$$ \int_{B(0,r-\delta)} (K_\varepsilon* u) \nabla \eta =\int_{B(0,r)} u (K_\varepsilon*  \nabla \eta)=$$
$$=\int_{B(0,r)} u \nabla(K_\varepsilon*  \eta)= -\int_{B(0,r)} \nabla u (K_\varepsilon*  \eta)= $$
$$=- \int_{B(0,r)}\left(\int_{B(0,r-\delta)} \nabla u(x) K_\varepsilon(x-y)\eta(y) dy \right) dx=$$ $$= - \int_{B(0,r-\delta)}\left(\int_{B(0,r)} \nabla u(x) K_\varepsilon(y-x)\eta(y) dx \right) dy = -\int_{B(0,r-\delta)} (\nabla u *K_\varepsilon)  \eta.$$

So $$u_\varepsilon:=K_\varepsilon* u \quad \textup{and} \quad v_\varepsilon:=\quad K_\varepsilon* v$$ are in $W^{1,2}(B(0,r-\delta))\cap C^\infty(B(0,r-\delta))$ with 
\begin{enumerate}

\item $\nabla u_\varepsilon =  \nabla u * K_\varepsilon$,$\nabla v_\varepsilon =  \nabla v * K_\varepsilon$, 
\item
$\nabla u * K_\varepsilon \to \nabla u$, $\nabla v * K_\varepsilon \to \nabla v$ in $L^2(B(0,r-\delta))$ and a.e. in $B(0,r-\delta)$ as $\varepsilon \to 0$,
\item By Young's inequality for convolutions, we have $|u_\varepsilon|< \|u\|_{L^\infty(B(0,r))}$, $|v_\varepsilon|< \|v\|_{L^\infty(B(0,r))}$ in $B(0,r-\delta)$.
\end{enumerate}

We know that the convergence of $u_\varepsilon v_\varepsilon$ holds  a.e. in $B(0,r-\delta)$ and
$|u_\varepsilon v_\varepsilon|$ are bounded by $\|u\|_\infty\|v\|_\infty$ a.e. in $B(0,r-\delta)$ if $\varepsilon< \delta$. So by the Lebesgue dominated convergence theorem $u_\varepsilon v_\varepsilon \to uv$  in  $L^2(B(0,r-\delta))$.

We want to show that 
$$\nabla(uv) = u \nabla  v + v \nabla u$$
in the sense of $W^{1,2}(B(0,r-\delta))$. It is clear that $u \nabla  v + v \nabla u$ is in $L^2(B(0,r-\delta))$ because $u,v$ are bounded and their gradients are in $L^2(B(0,r-\delta))$.

Consider again  a test function $\eta \in C_0^\infty(B(0,r-\delta))$. We have
 $$ \int uv \nabla \eta = \lim_{\varepsilon \to 0} \int u_\varepsilon v_\varepsilon \nabla \eta = - \lim_{\varepsilon \to 0}  \int (\nabla u_\varepsilon v_\varepsilon + u_\varepsilon \nabla v_\varepsilon ) \eta=$$
 $$ =- \lim_{\varepsilon \to 0} \left[ \int \left((\nabla u_\varepsilon - \nabla u) v_\varepsilon + u_\varepsilon (\nabla v_\varepsilon - \nabla v) \right) \eta + \int ( \nabla u v_\varepsilon + u_\varepsilon \nabla  v ) \eta  \right].$$
 Note that $\int (\nabla u_\varepsilon - \nabla u) v_\varepsilon \eta  \to 0$ because $\nabla u_\varepsilon\to  \nabla u$ in $L^2(B(0,r-\delta))$ and  $|v_\varepsilon \eta | < \|v\|_\infty\| \eta\|_\infty$ in  $B(0,r-\delta)$. Similarly, $ \int u_\varepsilon (\nabla v_\varepsilon - \nabla v) \eta \to 0$.
Finally, by the Lebesgue dominated convergence theorem $$ \int ( \nabla u v_\varepsilon + u_\varepsilon \nabla  v ) \eta  \to \int ( \nabla u v + u \nabla  v ) \eta$$
because the convergence of the functions holds a.e. in  $B(0,r-\delta)$ and there is the integrable majorant
$(|\nabla u| \|v\|_\infty + \|u\|_\infty |\nabla  v |) \|\eta\|_\infty$.
Thus 
$\int uv \nabla \eta = - \int ( \nabla u v + u \nabla  v ) \eta$.

\end{proof}

\begin{fact} \label{fact6}
Let $\Omega$ be a bounded open set. Let $\varphi=1+\psi$, where $\psi \in W^{1,2}_0(\Omega)$,
$\|\psi\|_\infty \leq \frac{1}{3}$. Then the functions
$$ \tilde \varphi =
 \begin{cases}
\varphi \textup{ in } \Omega \\
1 \textup{ outside } \Omega
\end{cases}
\quad \textup{ and } \quad
\eta=
\begin{cases}
\frac 1 \varphi \textup{ in } \Omega \\
1 \textup{ outside } \Omega
\end{cases}
$$
are in $W^{1,2}_{loc}(\mathbb{R}^2)$ and 
$$\nabla \tilde \varphi= \nabla \varphi \mathbbm{1}_\Omega, \quad \nabla \eta = - \frac{\nabla \varphi}{\varphi^2}\mathbbm{1}_\Omega.$$
\end{fact}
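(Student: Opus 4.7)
The plan is to approximate $\psi$ by a sequence of smooth compactly supported functions, establish the conclusion for each approximant by direct classical computation, and then pass to the limit for both $\tilde\varphi$ and $\eta$. Applying Fact \ref{trivialities 1} to $3\psi$, I first choose a sequence $\psi_n \in C_0^\infty(\Omega)$ with $\|\psi_n\|_\infty \leq 2/3$ such that $\psi_n \to \psi$ and $\nabla \psi_n \to \nabla\psi$ both in $L^2(\Omega)$ and almost everywhere. Extend each $\psi_n$ by zero to all of $\mathbb{R}^2$; then $\psi_n \in C_0^\infty(\mathbb{R}^2)$ and $\nabla \psi_n = \nabla \psi_n \cdot \mathbbm{1}_\Omega$ pointwise. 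Set $\varphi_n = 1 + \psi_n$, so that $\tfrac{1}{3} \leq \varphi_n \leq \tfrac{5}{3}$ uniformly; in particular $1/\varphi_n$ is smooth on $\mathbb{R}^2$ and equals $1$ off $\Omega$.

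For $\tilde\varphi$: on any bounded $U \subset \mathbb{R}^2$, I have $\|\tilde\varphi - \varphi_n\|_{L^2(U)} \leq \|\psi - \psi_n\|_{L^2(\Omega)} \to 0$, while $\|\nabla \varphi_n - \nabla\varphi \cdot \mathbbm{1}_\Omega\|_{L^2(\mathbb{R}^2)} = \|\nabla\psi_n - \nabla\psi\|_{L^2(\Omega)} \to 0$. Since $(\varphi_n, \nabla\varphi_n) \to (\tilde\varphi, \nabla\varphi \cdot \mathbbm{1}_\Omega)$ in $L^2_{loc}$, the distributional identity $\int \tilde\varphi \nabla\xi = -\int (\nabla\varphi \cdot \mathbbm{1}_\Omega)\,\xi$ follows from the classical integration by parts for each $\varphi_n$, yielding $\tilde\varphi \in W^{1,2}_{loc}(\mathbb{R}^2)$ with $\nabla \tilde\varphi = \nabla\varphi\cdot \mathbbm{1}_\Omega$.

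For $\eta$: put $\eta_n = 1/\varphi_n$, so $\nabla \eta_n = -\nabla \psi_n / \varphi_n^2 = -(\nabla\psi_n / \varphi_n^2) \cdot \mathbbm{1}_\Omega$ classically. Using $\varphi_n, \varphi \geq 1/3$ on $\Omega$, the pointwise bound $|\eta_n - \eta| \leq 9 |\psi - \psi_n|$ on $\Omega$ gives $\eta_n \to \eta$ in $L^2_{loc}$. For the gradient, I decompose
\begin{equation*}
 \nabla \eta_n - \Bigl(-\tfrac{\nabla\varphi}{\varphi^2}\mathbbm{1}_\Omega\Bigr) = \frac{\nabla\psi - \nabla\psi_n}{\varphi_n^2}\,\mathbbm{1}_\Omega + \nabla\psi\,\Bigl(\tfrac{1}{\varphi^2} - \tfrac{1}{\varphi_n^2}\Bigr)\mathbbm{1}_\Omega.
\end{equation*}
The first term tends to $0$ in $L^2$ because $\varphi_n^{-2}$ is uniformly bounded and $\nabla\psi_n \to \nabla\psi$ in $L^2(\Omega)$. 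The second term tends to $0$ in $L^2$ by dominated convergence: the factor $\tfrac{1}{\varphi^2} - \tfrac{1}{\varphi_n^2}$ is uniformly bounded and, by the almost everywhere convergence $\psi_n \to \psi$, goes to $0$ a.e., while $|\nabla\psi|^2$ provides an integrable majorant. Once $(\eta_n, \nabla\eta_n) \to (\eta, -\tfrac{\nabla\varphi}{\varphi^2}\mathbbm{1}_\Omega)$ in $L^2_{loc}$, the distributional gradient identity follows by passing to the limit in $\int \eta_n \nabla\xi = -\int \nabla\eta_n\, \xi$.

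The only genuinely delicate point is the second piece of the gradient decomposition for $\eta$: without the uniform $L^\infty$ bound on $\psi_n$, one could not control $\varphi_n^{-2}$ nor dominate $\nabla\psi\cdot (\varphi^{-2} - \varphi_n^{-2})$ by $|\nabla\psi|$. This is exactly what the approximation scheme from Fact \ref{trivialities 1} supplies, and everything else reduces to standard $W^{1,2}$ bookkeeping.
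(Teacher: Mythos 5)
Your proof is correct and follows essentially the same route as the paper's: approximate $\psi$ by smooth compactly supported $\psi_n$ with a uniform $L^\infty$ bound (the paper also invokes this, implicitly using Fact~\ref{trivialities 1}), extend by zero, and pass to the limit for $\tilde\varphi$ directly and for $\eta$ via the same two-term decomposition of $\nabla\eta_n + \frac{\nabla\psi}{\varphi^2}\mathbbm{1}_\Omega$, with dominated convergence on the second term. No meaningful differences.
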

\begin{proof}
Consider a sequence of functions $\psi_k \in C_0^\infty(\Omega)$ such that
$\| \psi_k\|_\infty  \leq \frac 2 3 $ and $\psi_k \to \psi$, $\nabla \psi_k \to \nabla \psi$ in $L^2(\Omega)$
and a.e. in $\Omega$. We can extend $\psi_k$ by zero outside $\Omega$ and get a sequence of $C_0^\infty(\mathbb{R}^2)$ functions, which we will still denote by $\psi_k$, such that $
\psi_k=0, \nabla \psi_k=0$  in $\mathbb{R}^2\setminus \Omega$ while $\psi_k \to \psi \mathbbm{1}_\Omega$ in $L^2(\mathbb{R}^2)$ and a.e., $\nabla \psi_k \to \nabla \psi \mathbbm{1}_\Omega$ in $L^2(\mathbb{R}^2)$. This immediately implies that $1+\psi_k \to \tilde \varphi$, $\nabla (1+\psi_k)=\nabla \psi_k \to \nabla \psi \mathbbm{1}_\Omega$ in $L^2_{loc}(\mathbb{R}^2)$,
so $\tilde \varphi \in W^{1,2}_{loc}(\mathbb{R}^2)$ and $\nabla \tilde \varphi= \tilde \varphi \mathbbm{1}_\Omega.$
Note that 
$$ \left| \frac{1}{1+\psi_k}- \frac{1}{1+\psi}\right|= \left| \frac{\psi_k-\psi}{(1+\psi_k)(1+\psi)}\right| \leq 9|\psi_k - \psi|$$
and 
$$ \left| \nabla \frac{1}{1+\psi_k}+ \frac{\nabla \psi}{(1+\psi)^2}\right|=  \left| \frac{\nabla \psi_k}{(1+\psi_k)^2} - \frac{\nabla \psi}{(1+\psi)^2}\right|\leq$$
$$\leq |\nabla \psi_k -\nabla \psi| \frac{1}{(1+\psi_k)^2}+|\nabla \psi|\left| \frac{1}{(1+\psi_k)^2} - \frac{1}{(1+\psi)^2}\right| \quad \textup{ in } \Omega.$$
Also $\frac{1}{1+\psi_k}=1$, $\nabla\frac{1}{1+\psi_k}=0$ in $\mathbb{R}^2\setminus \Omega$.
Hence $\frac{1}{1+\psi_k} \to \eta$ in $L^2(\mathbb{R}^2)$ and we would like to show that
$$\nabla\frac{1}{1+\psi_k} \to - \frac{\nabla \psi}{(1+\psi)^2} \mathbbm{1}_\Omega= - \frac{\nabla \varphi}{\varphi^2}\mathbbm{1}_\Omega \quad \textup{ in } L^2(\mathbb{R}^2).$$
To see the latter, note that $\frac{1}{(1+\psi_k)^2} \leq 9$, so
$$\int_\Omega |\nabla \psi_k - \nabla \psi|^2 \frac{1}{(1+\psi_k)^4}  \leq 81 \int_\Omega |\nabla \psi_k - \nabla \psi|^2 \to 0,$$
and the functions $|\nabla \psi|^2 \left[ \frac{1}{(1+\psi_k)^2} - \frac{1}{(1+\psi)^2} \right]^2$
have the integrable majorant $81|\nabla \psi|^2$ and tend to $0$ almost everywhere in $\Omega$.
Thus $\eta \in W^{1,2}_{loc}(\mathbb{R}^2)$ and $\nabla \eta = - \frac{\nabla \varphi}{\varphi^2}\mathbbm{1}_\Omega$
as required.

\end{proof}

\begin{lemma} \label{le: W0} Let $\Omega$ be a bounded open set and let a function $f\in C^1(\overline{\Omega})$ be zero on $\partial \Omega$. Then $f \in W^{1,2}_{0}(\Omega)$.
\end{lemma}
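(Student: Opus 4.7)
The strategy is to construct an approximating sequence $f_n\in C_c^\infty(\Omega)$ converging to $f$ in $W^{1,2}(\Omega)$ by truncation with a distance-based cutoff followed by mollification. Let $d(x)=\textup{dist}(x,\partial\Omega)$ and fix a smooth $\phi\colon\mathbb{R}\to[0,1]$ with $\phi\equiv 0$ on $(-\infty,1]$, $\phi\equiv 1$ on $[2,\infty)$, $|\phi'|\le 2$. Set $\eta_\varepsilon(x)=\phi(d(x)/\varepsilon)$, which is Lipschitz with $|\nabla\eta_\varepsilon|\le 2/\varepsilon$, supported in the compact set $\{d\ge\varepsilon\}\subset\Omega$, and equal to $1$ on $\{d\ge 2\varepsilon\}$. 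Define $f_\varepsilon:=\eta_\varepsilon f$; this is Lipschitz with compact support in $\Omega$, which is exactly what is needed before the final mollification step. The product rule gives $\nabla f_\varepsilon=\eta_\varepsilon\nabla f+f\nabla\eta_\varepsilon$, and by dominated convergence $f_\varepsilon\to f$ and $\eta_\varepsilon\nabla f\to\nabla f$ in $L^2(\Omega)$ as $\varepsilon\to 0$.

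The main obstacle is to show that the spurious term $f\nabla\eta_\varepsilon$, supported in the strip $S_\varepsilon=\{x\in\Omega:\varepsilon\le d(x)\le 2\varepsilon\}$, tends to $0$ in $L^2(\Omega)$. For this I would establish a linear bound $|f(x)|\le M\,d(x)$ by a simple geometric argument: for any $x\in\Omega$ pick a closest boundary point $y\in\partial\Omega$, so $|x-y|=d(x)$. If the segment $[x,y]$ left $\overline\Omega$, it would have to cross $\partial\Omega$ at some $z\in(x,y)$ with $|x-z|<|x-y|$, contradicting the minimality of $y$; the same argument shows no interior point of the segment lies on $\partial\Omega$, so $[x,y]\subset\overline\Omega$ and $(x,y)\subset\Omega$. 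Since $f\in C^1(\overline\Omega)$ with $f(y)=0$, the fundamental theorem of calculus applied along the segment yields $|f(x)|=\left|\int_0^1\nabla f(y+t(x-y))\cdot(x-y)\,dt\right|\le M\,d(x)$, where $M=\|\nabla f\|_{L^\infty(\overline\Omega)}<\infty$ by compactness of $\overline\Omega$. On $S_\varepsilon$ this gives $|f\nabla\eta_\varepsilon|^2\le(2M\varepsilon)^2\cdot(2/\varepsilon)^2=16M^2$ pointwise, so $\int_{S_\varepsilon}|f\nabla\eta_\varepsilon|^2\le 16M^2\,m_2(S_\varepsilon)$.

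It only remains to observe that $m_2(S_\varepsilon)\to 0$: the sets $\{x\in\Omega:d(x)<2\varepsilon\}$ are decreasing in $\varepsilon$ with empty intersection inside the finite-measure set $\Omega$, so continuity of measure gives the claim. This completes the proof that $f_\varepsilon\to f$ in $W^{1,2}(\Omega)$. Finally, for each $\varepsilon$ I would convolve $f_\varepsilon$ with a standard mollifier $\kappa_\delta$ of scale $\delta=\delta(\varepsilon)$ chosen smaller than $\textup{dist}(\textup{supp}\,f_\varepsilon,\partial\Omega)$, obtaining $g_\varepsilon:=\kappa_\delta*f_\varepsilon\in C_c^\infty(\Omega)$ with $\|g_\varepsilon-f_\varepsilon\|_{W^{1,2}(\Omega)}<\varepsilon$; the diagonal sequence $\{g_{\varepsilon_n}\}$ then converges to $f$ in $W^{1,2}(\Omega)$, witnessing $f\in W^{1,2}_0(\Omega)$ by definition.
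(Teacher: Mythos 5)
Your proposal is correct and follows essentially the same route as the paper: a cutoff supported away from $\partial\Omega$, the linear bound $|f(x)|\lesssim d(x,\partial\Omega)$ coming from $\nabla f\in L^\infty(\overline\Omega)$ and $f|_{\partial\Omega}=0$, and the observation that the boundary strip has vanishing measure. The only cosmetic differences are that you spell out the segment argument behind the linear bound (the paper asserts it) and take a Lipschitz distance-based cutoff plus a final mollification where the paper uses a smooth cutoff and invokes $C^1_0(\Omega)\subset W^{1,2}_0(\Omega)$ directly.
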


\begin{proof}
Let $\varepsilon>0$, denote by $\Omega_\varepsilon$ the set of points $x$ in $\Omega$ with distance to the boundary of $\Omega$ at least $\varepsilon$.  Let $\eta$ be a function in $C_0^\infty(\Omega)$ with the following properties:
\begin{itemize}
\item $\eta(x)= 1$, if $x \in \Omega_\varepsilon$.
\item $0 \leq \eta \leq 1$ and $|\nabla \eta| \leq \frac{C}{\varepsilon}$ in $\Omega$.
\end{itemize}
The function $f\eta$ is in $C_0^1(\Omega) \subset W_0^{1,2}(\Omega) $.
We want to show that $f\eta$ converge to $f$ in $W_0^{1,2}(\Omega)$ norm as $\varepsilon \to 0$. Observe that $|\nabla f|$ is uniformly bounded in $\Omega$ by some constant $A=A(f)$, so $|f(x)| \leq A\varepsilon $ if the distance from $x$ to $\partial \Omega$ is smaller than $\varepsilon$ and therefore $$|\nabla(f\eta)|\leq |\nabla f||\eta|+  | f|| \nabla \eta| \leq A + AC \quad \text{ in } \Omega\setminus \Omega_\varepsilon.$$
Then $$\int\limits_\Omega |f -f \eta|^2 = \int\limits_{\Omega \setminus \Omega_\varepsilon } |f -f \eta|^2  \leq \int\limits_{\Omega \setminus \Omega_\varepsilon } |f|^2 \to 0$$
and 
$$\int\limits_\Omega |\nabla f - \nabla (f \eta)|^2 = \int\limits_{\Omega \setminus \Omega_\varepsilon } |\nabla f -\nabla(f \eta)|^2 \leq 2 \int\limits_{\Omega \setminus \Omega_\varepsilon } (|\nabla f|^2 +|\nabla(f \eta)|^2) \leq$$
$$  \leq m_2(\Omega \setminus \Omega_\varepsilon) C_1 A^2. $$
Since  $m_2(\Omega \setminus \Omega_\varepsilon) \to 0$ as $\varepsilon \to 0$, we have verified that $f \in W_0^{1,2}(\Omega)$. 
\end{proof}
\begin{lemma} \label{le: diameter}
Let $u$ be a solution to $\Delta u + Vu=0$, $|V|\leq 1$, in a ball $B(x,r)$,
where $r<r_0$ and $r_0$ is a sufficiently small universal constant.
If $u$ is continuous up to $\partial B(x,r)$ and $u>0$ on $\partial B(x,r)$, then
$u>0$ in $B(x,r)$.
\end{lemma}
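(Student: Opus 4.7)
By translation assume $x=0$. The idea is to construct a positive auxiliary function $\varphi$, uniformly close to $1$, that satisfies the same Schr\"odinger equation as $u$, and then work with the quotient $f = u/\varphi$. That quotient satisfies a uniformly elliptic equation in divergence form with no zeroth-order term, for which the classical weak maximum principle is available and, what is crucial, delivers strict positivity without any appeal to a strong maximum principle or Harnack inequality.

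The Poincar\'e constant of $B(0,r)$ is bounded by some $C_0 r^2$. Choosing $r_0$ small enough so that $C_0 r_0^2 \cdot \|V\|_\infty \leq C_0 r_0^2 \ll 1$, Lemma \ref{solving Schrodinger} produces $\varphi = 1 + \tilde\varphi$ with $\Delta\varphi + V\varphi = 0$ weakly in $B(0,r)$, $\tilde\varphi \in W_0^{1,2}(B(0,r))$, and $\|\tilde\varphi\|_\infty \leq C_1 r^2 \leq 1/2$, so that $\tfrac{1}{2} \leq \varphi \leq \tfrac{3}{2}$ throughout $B(0,r)$. Set $f = u/\varphi$. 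Combining Facts \ref{fact5}, \ref{fact6}, and \ref{product} one verifies that $f \in W^{1,2}_{\mathrm{loc}}(B(0,r))$, and a direct computation from the two weak equations yields
\[
\mathrm{div}(\varphi^2 \nabla f) \;=\; \mathrm{div}(\varphi\,\nabla u - u\,\nabla \varphi) \;=\; \varphi\,\Delta u - u\,\Delta \varphi \;=\; -V\varphi u + Vu\varphi \;=\; 0
\]
weakly in $B(0,r)$ (this is the analog of Lemma \ref{div equation} but simpler, since here there is no nodal set to worry about).

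Let $m = \tfrac{1}{3}\min_{\partial B(0,r)} u > 0$ and put $w = (m - f)^+$. Because $u$ is continuous up to $\partial B(0,r)$ with $u \geq 3m$ on the boundary, and $\varphi \leq \tfrac{3}{2}$, the quotient $f = u/\varphi$ satisfies $f \geq 2m > m$ in some open annular neighborhood of $\partial B(0,r)$; hence $w$ is compactly supported in $B(0,r)$, and since $w \in W^{1,2}_{\mathrm{loc}}(B(0,r)) \cap L^\infty(B(0,r))$ with compact support it belongs to $W_0^{1,2}(B(0,r))$. Testing the divergence equation against $w$ and using $\nabla w = -\mathbbm{1}_{\{f<m\}}\nabla f$ gives
\[
0 \;=\; \int_{B(0,r)} \varphi^2 \,\nabla f\cdot\nabla w\,dx \;=\; -\int_{B(0,r)} \varphi^2 \,|\nabla w|^2\,dx.
\]
Since $\varphi \geq \tfrac{1}{2}$, this forces $\nabla w = 0$ a.e., so $w$ is a.e.\ constant; being an element of $W_0^{1,2}$ it vanishes. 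Thus $f \geq m$ a.e., so $u = f\varphi \geq m/2$ a.e.\ in $B(0,r)$, and by continuity of $u$ (Fact \ref{fact5}) one has $u \geq m/2 > 0$ pointwise in $B(0,r)$.

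The main obstacle is entirely administrative: one must assemble the regularity facts from the appendix (Facts \ref{fact5}, \ref{fact6}, \ref{product}) to confirm that $f \in W^{1,2}_{\mathrm{loc}}$, that $(m-f)^+$ genuinely lies in $W_0^{1,2}(B(0,r))$, and that $w$ is an admissible test function for the weak divergence equation. Once this is in place, the substantive point is structural: passing from the Schr\"odinger equation $\Delta u + Vu = 0$ to the divergence form $\mathrm{div}(\varphi^2\nabla f) = 0$ eliminates the zeroth-order term, which is exactly what permits the elementary weak maximum principle to yield $f \geq m > 0$ a.e.\ and thus strict positivity of $u$.
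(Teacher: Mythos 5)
Your proof is correct, and it takes a genuinely different route from the paper's. The paper constructs the auxiliary solution $\varphi$ on the sublevel set $\Omega=\{u<\delta/2\}$ (rather than on the whole ball), notes that $g=\tfrac{\delta}{2}\varphi - u\in W_0^{1,2}(\Omega)$ solves $\Delta g + Vg=0$, and then extracts the contradiction $\int_\Omega|\nabla g|^2=\int_\Omega Vg^2\le \int_\Omega g^2 \le Cr^2\int_\Omega|\nabla g|^2$ directly from the Schr\"odinger form, so no divergence-form reduction is needed. You instead build $\varphi$ on all of $B(0,r)$, pass to $f=u/\varphi$ satisfying $\mathrm{div}(\varphi^2\nabla f)=0$, and run a Stampacchia-type truncation argument with $w=(m-f)^+$. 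The trade-off: your route needs the divergence-form machinery (Facts \ref{product}, \ref{fact6}, and a no-nodal-set version of Lemma \ref{div equation}) to be set up first, whereas the paper's route uses only Lemma \ref{solving Schrodinger}, Fact \ref{fact5}, Lemma \ref{le: W0}, and the Poincar\'e inequality; on the other hand your argument fits the paper's Act II theme and is a textbook weak maximum principle once the reduction is in place. A small cosmetic point: you write ``$f\ge 2m$ near the boundary,'' but what continuity actually gives is $f\ge 2m-\tfrac{2\epsilon}{3}>m$ for small $\epsilon$; the conclusion that $w$ has compact support is unaffected. Also note that Fact \ref{fact6} as stated assumes $\|\psi\|_\infty\le \tfrac13$, so one should take $r_0$ small enough that $\|\tilde\varphi\|_\infty\le\tfrac13$ (rather than $\tfrac12$), which costs nothing.
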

\begin{proof}
We may assume that $u$ is larger than a positive constant $\delta$ on $\partial B(x,r)$.
Consider the set $\Omega=\{ x \in B(x,r): u(x)< \frac \delta 2 \}$.
This is an open set strictly inside $B(x,r)$ and if $u$ is not positive in $B(x,r)$, then $\Omega$  is not empty.

Since $u \in C^1(\overline{\Omega})$ by Fact \ref{fact5} and $u=\frac \delta 2$ on $\partial \Omega$, we know by Lemma \ref{le: W0} that $(u-\frac \delta 2) \in W_0^{1,2}(\Omega)$. 

Note that $\overline{\Omega} \subset B(0,r) $, so $\Omega$ has a Poincare constant smaller than $Cr^2$.
By Lemma \ref{solving Schrodinger}, if $r$ is sufficiently small, we can find $\varphi=1+\tilde \varphi$ with
$\tilde \varphi \in W^{1,2}_0(\Omega), \|\tilde \varphi\|_\infty<\frac{1}{2}$ 
such that $\varphi$ is a solution to $\Delta \varphi + V \varphi=0$ in $\Omega$.
Then $ (\frac{\delta}{2}\varphi - \frac{\delta}{2}) \in W^{1,2}_0(\Omega)$ and therefore the function
$g=(\frac{\delta}{2}\varphi - u) \in W^{1,2}_0(\Omega) $. The function $g$ is also a solution to $\Delta g+ Vg=0$. For any $\eta\in C_0^\infty(\Omega)$, we have $\int_\Omega \nabla g \nabla \eta=  \int_\Omega Vg\eta$
and taking the limit in $W^{1,2}_0(\Omega)$, we get
$$ \int_\Omega |\nabla g|^2 = \int_\Omega Vg^2 \leq \int_\Omega g^2.$$
However Poincare's inequality implies 
$$ \int_\Omega g^2 \leq Cr^2 \int_\Omega |\nabla g|^2. $$
If $r$ is sufficiently small, this could happen only if $g=0$ in $\Omega$.
So $u= \frac{\delta}{2}\varphi$  in $\Omega$, but $\varphi> \frac 1 2$ in $\Omega$.
So $u>\frac \delta 4$ in $\Omega$ and in $B(x,r)$.
\end{proof}

\subsection{Divergence free vector fields on the plane} \label{sec: divergence}
If $F=(F_1,F_2): B \to \mathbb{R}^2$ is a $C^1$- smooth vector field in a disk $B$ on the plane such
that $F$ is divergence free: $\text{div} F=0$ in $B$, then 
there is a smooth function $u$ such that $$(F_1,F_2)= \nabla \times u := (u_{x_2},-u_{x_1}).$$

Sometimes people refer to the statement above as to Poincare's lemma or the fundamental theorem of calculus, or the inverse gradient theorem. Here is the sketch of the standard proof. WLOG, $B=B(0,1)$. Consider any point $Q\in B$ and
the rectangle $R \subset B$ with opposite vertices $0$ and $Q$, and sides parallel to $x_1$ and $x_2$ axes.

  \begin{figure}[h!] 
    \includegraphics[width=0.5\textwidth]{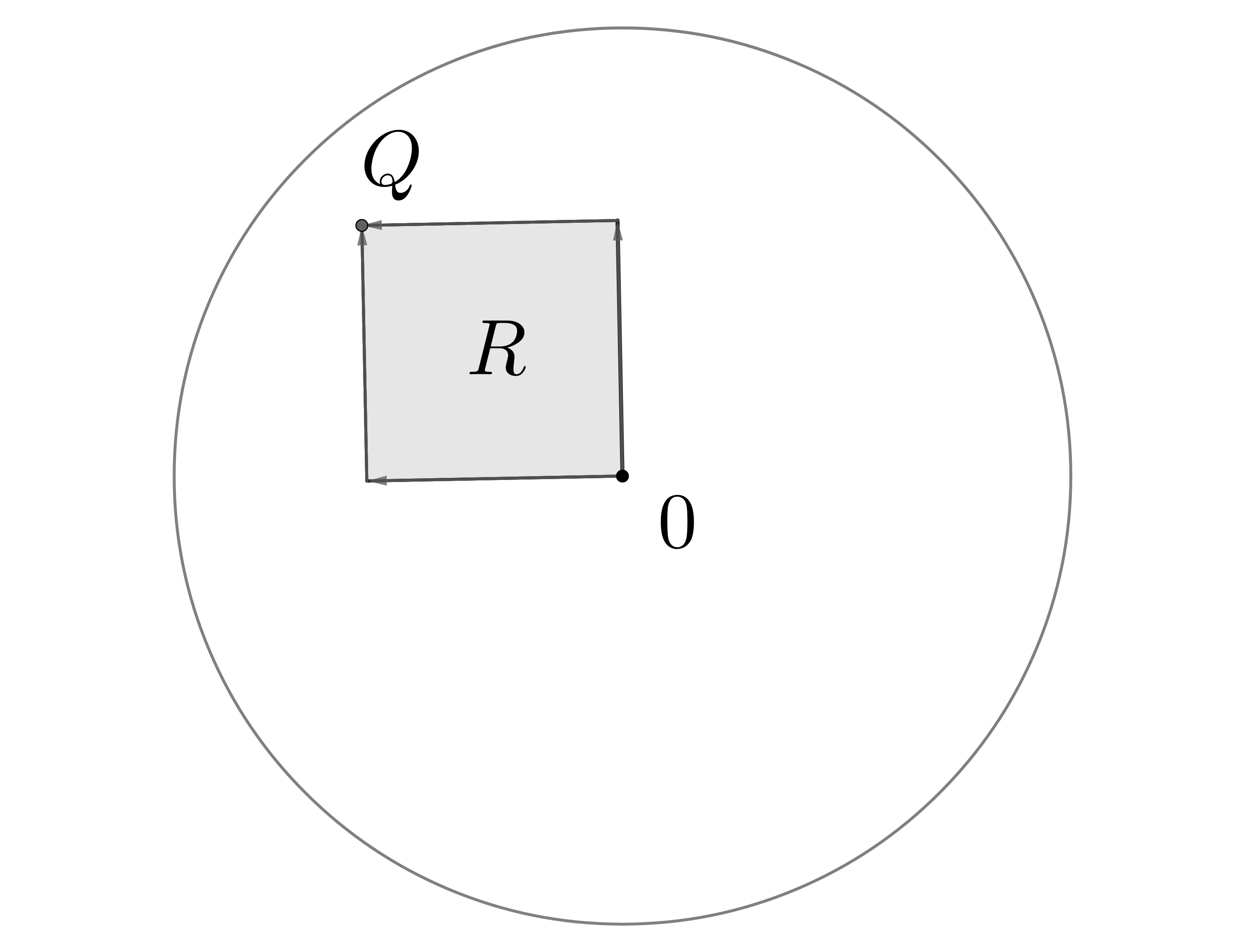}
   
\end{figure}

 Note that
that the contour integral  $$  \int_{\partial R} (-F_2,F_1)\cdot dx= \int_{\partial R} F\cdot n(x) |dx| = \int_R \text{div } F$$
is zero. There are two simple paths that start at $0$, go along the sides of $R$ and end at $Q$. The integrals $\int(-F_2,F_1)dx$ over those two paths are the same and we define $u(Q)$ to be equal to both of them. The  differentiation of $u(Q)$ in the horizontal and vertical directions shows
that $(F_1,F_2)= (u_{x_2},-u_{x_1})$.

We need the version with less regularity assumptions on the divergence free vector field $F$:
if $F\in L_{\text{loc}}^p(B(0,1))$, $1\leq p<\infty$, and $\int_{B(0,1)}F\nabla h=0$ for any $h \in C^\infty_0(B(0,1))$ (the divergence free condition), then there is a function $u\in W^{1,p}_{loc}(B(0,1))$  such that $(F_1,F_2)= \nabla \times u := (u_{x_2},-u_{x_1}).$

Indeed, let $$K_\varepsilon(z)=\frac{1}{\varepsilon^2}\kappa(|z|/\varepsilon)$$ be a $C^\infty$-approximation to identity with $\text{supp} K_\varepsilon(z)\subset B(0,\varepsilon)$.
Define $$F_\varepsilon= F*K_\varepsilon= (F_1*K_\varepsilon, F_2*K_\varepsilon)$$ in the smaller ball $B(0,1-\varepsilon)$. Then 
$F_\varepsilon$ is divergence free in $B(0,1-\varepsilon)$.
Indeed, if $f\in C_0^\infty(B(0,1-\varepsilon)$, then by Fubini's theorem
 $$\int (F*K_\varepsilon) \nabla f= \int F (K_\varepsilon * \nabla f)=\int F \nabla (K_\varepsilon * f)=0.$$
 So there is a $C^\infty$ function $u_\varepsilon$ such that $F_\varepsilon=\nabla \times u_\varepsilon$ in $B(0,1-\varepsilon)$. Fix $\delta \in (0,1)$.
 By the Lebesgue theory $F*K_\varepsilon$ converge to $F$ in $L^p(B(0,1-\delta))$.
 Thus $\nabla u_\varepsilon$ is a Cauchy sequence in $L^p(B(0,1-\delta))$. Let us add a constant to $u_\varepsilon$ so that $\int_{B(0,1-\delta)}u_\varepsilon=0$. By the Poincaré--Wirtinger inequality (see p.275, Theorem 1 in \cite{P}) $u_\varepsilon$ is a Cauchy sequence in $L^p(B(0,1-\delta))$.
 Thus we can find a function $\tilde u_\delta$ such that $u_\varepsilon$ converge to $\tilde u_\delta$ in $W^{1,p}(B(0,1-\delta))$
 and $\nabla \times \tilde u_\delta = \lim_{\varepsilon \to 0} \nabla \times u_\varepsilon=(F_1,F_2)$.
 For any $\delta_1,\delta_2 \in (0,1)$, the gradients of $\tilde u_{\delta_1}$ and  $\tilde u_{\delta_2}$ are 
 the same in $B(0,1-\max(\delta_1,\delta_2))$ and therefore $\tilde u_{\delta_1} - \tilde u_{\delta_2}$ is constant almost everywhere in $B(0,1-\max(\delta_1,\delta_2))$.
 Finally, let us modify $\tilde u_\delta $ by subtracting a constant  so that $ \int_{B(0,1/2)} \tilde u_\delta = 0 $
 for all $\delta <1/2$. Then $u$ is well-defined by $u=\tilde u_\delta$ in $B(0,1-\delta)$.

\end{document}